\pgfplotsset{width=7cm,compat=1.13}
\newif\ifdraft
\long\def\comment#1{} \long\def\old#1{}
\numberwithin{equation}{section}
\numberwithin{figure}{section}
\newtheorem{theorem}{Theorem}[section]
\newtheorem{corollary}[theorem]{Corollary}
\newtheorem{lemma}[theorem]{Lemma}
\newtheorem{proposition}[theorem]{Proposition}
\newtheorem{definition}[theorem]{Definition}
\newtheorem*{thm}{Theorem}
\theoremstyle{remark}\newtheorem{remark}[theorem]{Remark}
\let\qqed=\qed
\def\QED{\qqed\medskip}
\let\qed=\QED
\title{Local Analysis of Loewner Equation}
\author[1,2]{Henshui Zhang}
\author[2]{Michel Zinsmeister}
\affil[1]{School of Mathematical Sciences, Fudan University, 200433 Shanghai, PR China.}
\affil[2]{IDP, Universit\'e d' Orl\'eans, 45067 Orl\'eans  Cedex 2, France.}
\begin{document}
\date{}
\maketitle

\begin{abstract}
    Let $\lambda:[0,+\infty)\mapsto\mathbb{R}$ be the driving function of a chordal Loewner process. In this paper we find new conditions on $\lambda$ which imply that the process is generated by a simple curve. This result improves former one by Lind ,Marshall and Rhode, and it particular gives new results about the case $\lambda(t)=cW_b(t)$, $W_b$ being a H\"{o}lder-$1/2$ Weierstrass function.
    In the second part we find new conditions on $\lambda$ implying that the process is generated by a curve. The main tool here is a duality relation between the real part and the imaginary part of the Loewner equation.
\end{abstract}

{\small{\bf Key words and phrases} \,\, Loewner equation, Boundary Behavior}

{\small{\bf 2010 Mathematics Subject Classification} \,\,30C20 ,\ 30E25,\ 30C62.} \vskip1cm

\section{Introduction}
\subsection{Background}
This paper concerns Loewner theory of planar growth processes. Before stating the results, and in order to put them in perspective, we begin by recalling the main features of this theory.

We will focus on one side of the theory, namely the chordal Loewner equation. Strictly speaking Loewner original theory is a variant, nowadays called radial Loewner equation, that he introduced  in order to solve the $n=3$ case of Bieberbach conjecture in 1923.

Let $\mathbb{H}$ be the upper half-plane $\mathbb{H}=\{z=x+iy\in\mathbb{C};\,y>0\}$.

Assume first that $\gamma:[0,T]\rightarrow\overline{\mathbb{H}}$ is continuous and injective with $\gamma(t)\in\mathbb{H},t\in(0,T],\gamma(0)=0$. We write $K_t=\gamma[0,t]$ and $H_t=\mathbb{H}\backslash K_t$. This growing closed set $K_t$ is called the hull of  the Loewner process. From Riemann mapping theorem, there is an unique conformal mapping $g_t:H_t\rightarrow\mathbb{H}$ satisfying the following expansion at $\infty$:  
\begin{equation}\label{hcap} g_t(z)=z+\frac{c(t)}{z}+O(\frac{1}{z^2}) , \end{equation}
with $ c(t)\in\mathbb{R}_+$, and $g_t$ can be extended by Schwarz reflection principle to $\mathbb{C}\backslash K_t\cup s(K_t)$ holomorphically, where $s(z)=\bar z$.

Since the function $t\mapsto c(t)$, called half-plane capacity, is continuous and increasing from $0$ to $\infty$, we may reparametrize $\gamma(t)$ so that  $c(t)=2t$. It can be proved that the limit  $\lambda(t)=\displaystyle\lim_{z\in\mathbb{H}_t,z\rightarrow\gamma(t)}g_t(z)$ exists and lies in $\mathbb{R}$. Moreover $t\mapsto\lambda(t)$ is continuous and $t\mapsto g_t(z)$ satisfy the following so-called chordal Loewner equation:
\begin{equation}\label{LE}
\dot{g}_t(z)=\frac{2}{g_t(z)-\lambda(t)},\; g_0(z)=z,\, z\in\mathbb{H}.
\end{equation}
The proofs of these facts can be found in \cite{rohde2011basic}. The function  $\lambda$ is called the driving function of the process. Notice that $\lambda(0)=0$, since $g_0=\mathrm{id}$.


There is a converse to the preceeding considerations. Indeed, given a continuous function $\lambda:\mathbb{R}_+\to \mathbb{R}$, by Cauchy-Lipschitz theorem for ODE's, for an initial value $z_0\in\bar{\mathbb{H}}$, there exists a unique maximal solution of (\ref{LE}) defined on $[0,T_{z_0})\leq+\infty$ and  $\displaystyle\lim_{t\rightarrow T_{z_0}^-}g_t(z_0)=\lambda(T_{z_0})$ in the case $T_{z_0}<+\infty$. We say that $T_z$ is the capture time of $z$. We define for $t\geq 0$, 
$$K_t=\{z|T_z\leq t\},$$
 and it happens that $g_t$ is for all $t$ the Riemann mapping $H_t=\mathbb{H}\backslash K_t\rightarrow\mathbb{H}$ satifying the "hydrodynamic" normalization (\ref{hcap}).

For the process we started with, $K_t=\gamma([0,t])$ and we say that the process is generated by the (simple) curve $\gamma$. More generally we will say that the Loewner process driven by the function $\lambda$ is generated by the curve $\gamma$ if there exists a continuous function $\gamma:\mathbb{R}_+\to\overline{\mathbb{H}}$ with $\gamma(0)=0$, not necessarily injective, such that for all $t\geq 0$,
$H_t$ is the unbounded connected component of  $\mathbb{H}\backslash K_t$.

This is almost surely the case for the processes $SLE_\kappa$ which were introduced by Oded Schramm \cite{schramm2000scaling} and which are the chordal Loewner processes driven by $\lambda$ defined by
$$\lambda(t)=\sqrt{\kappa}B_t,$$
$B_t$ being a real standard Brownian motion. This is a deep theorem which is due to Rohde and Schramm \cite{rohde2011basic} for $\kappa\neq 8$ and to Lawler, Schramm and Werner \cite{lawler2011conformal} for $\kappa=8$.\\
Rohde and Schramm have precised this theorem by proving that $SLE_\kappa$ undergoes the following phases:
\begin{enumerate}[i.]
\item $0\leq\kappa\leq 4$, $\gamma$ is almost surely injective.
\item $4<\kappa<8$, $\gamma$ is almost surely a non-simple but nowhere-dense path.
\item $8\leq\kappa$, $\gamma(t)$ is almost surely a space-filling curve.
\end{enumerate}

Marshall and Rohde \cite{Marshall2005The} were the first authors to exhibit Loewner processes that are not generated by a curve and investigated sufficient conditions on the driving function that implies that the process is generated by a curve. Together with J.Lind \cite{lind2005sharp} they have shown that if the $1/2$-Lipschitz norm
$$\Vert \lambda\Vert_{1/2}=\sup{\frac{\vert\lambda(y)-\lambda(x)\vert}{\vert y-x\vert^{1/2}},\,x<y\in\mathbb{R}_+}$$
is $<4$ then the process is generated by a simple curve, and their example of a Loewner process not generated by a curve satisfies
$$ \Vert \lambda\Vert_{1/2}=4.$$

Later these three authors generalized this result in \cite{lind2010collisions}, in the following way:

We say that a function $\lambda:[0,T]\rightarrow\mathbb{R}$ is locally $\frac{1}{2}$-Lipschitz with norm $\leq C$ if there exists $\delta\in(0,1)$ such that $|\lambda(t)-\lambda(t')|<C\sqrt{t'-t}, \forall 0\leq t<t'<T$ with $|t-t'|<\delta(T-t)$.

\begin{thm}[LMR]
Let $\lambda$ be locally $\frac{1}{2}$-Lipschitz with norm $C<4$ on $[0,T]$ then
\begin{enumerate}[1)]
    \item If $\displaystyle\lim_{t\rightarrow T^-}\frac{\lambda(T)-\lambda(t)}{\sqrt{T-t}}=\kappa<4$ then the process is driven by a simple curve $\gamma:[0,T]\rightarrow\bar{\mathbb{H}}$ with $\gamma(T)\in\mathbb{H}$.
    \item If $\displaystyle\lim_{t\rightarrow T^-}\frac{\lambda(T)-\lambda(t)}{\sqrt{T-t}}=\kappa>4$ then the process is driven by a curve $\gamma:[0,T]\rightarrow\bar{\mathbb{H}}$ with $\gamma(T)\in\mathbb{R}$ or $\gamma(T)\in\gamma([0,T))$.
\end{enumerate}
\end{thm}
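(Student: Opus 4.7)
The plan is to combine the global Lind--Marshall--Rohde criterion ($\Vert\lambda\Vert_{1/2}<4$ forces simple-curve generation) with an asymptotic analysis of the tip as $t\to T^-$. The local half-Lipschitz condition is tailored so that every sub-interval terminating strictly before $T$ inherits a genuine half-Lipschitz bound with the same constant $C<4$, providing the uniform regularity needed to extend a simple curve all the way to $t=T$.

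First I would show that the process is generated by a simple curve on $[0,T)$. Given $t_0<T$, cover $[0,t_0]$ by finitely many short windows $[a_i,b_i]$ with $b_i-a_i<\delta(T-a_i)$; on each such window the restriction of $\lambda$ satisfies a genuine half-Lipschitz bound with constant $C<4$, since for $t<t'$ in $[a_i,b_i]$ one has $t'-t<\delta(T-a_i)\leq\delta(T-t)$. Conjugation by $g_{a_i}$ followed by translation by $-\lambda(a_i)$ turns the original chain restricted to $[a_i,b_i]$ into a fresh Loewner chain driven by the centred function $\lambda(\cdot+a_i)-\lambda(a_i)$ on $[0,b_i-a_i]$, to which the global $\Vert\cdot\Vert_{1/2}<4$ criterion applies and produces a simple arc. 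The concatenation through the inverse maps $g_{a_i}^{-1}$ is automatically simple because each new arc at time $a_i$ is attached precisely at the tip of $K_{a_i}$.

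Next I would zoom in at $T$. For small $\rho>0$, set $\mu_\rho(s)=\rho^{-1/2}\bigl(\lambda(T-\rho+\rho s)-\lambda(T-\rho)\bigr)$, $s\in[0,1]$; by scale invariance of the Loewner equation, $\mu_\rho$ drives a chain conformally equivalent to the piece of the original chain on $[T-\rho,T]$. The local bound gives $\Vert\mu_\rho\Vert_{1/2}\leq C$ uniformly in $\rho$, while the hypothesis yields $\mu_\rho(1)\to\kappa$. By Arzel\`a--Ascoli, any subsequential uniform limit $\mu$ is half-Lipschitz with $\mu(1)=\kappa$. The constant-slope model $\lambda(t)=\kappa\sqrt{t}$ (Kufarev's explicit example) is known to generate, for $|\kappa|<4$, a line segment terminating inside $\mathbb{H}$ and, for $|\kappa|>4$, a circular arc reaching $\mathbb{R}$. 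An equicontinuity argument on the reverse flow (using the strict inequality $C<4$) then identifies the limit of the rescaled tips with the tip of the Kufarev chain of parameter $\kappa$, yielding $\gamma(T)\in\mathbb{H}$ in case~1 and $\gamma(T)\in\mathbb{R}\cup\gamma([0,T))$ in case~2, the dichotomy coming from whether the limiting arc first meets $\mathbb{R}$ or collides with the already-formed curve.

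The main obstacle is precisely this final identification step: the map sending a driving function to the tip of the generated hull is not continuous in general, so uniform convergence of the $\mu_\rho$'s does not automatically yield convergence of tips. One must exploit the strict inequality $C<4$ to produce quantitative derivative estimates along the reverse Loewner flow, in the style of Lind's original proof, that convert equicontinuity of driving functions into genuine convergence of tips and thereby pinpoint $\gamma(T)$ in each case.
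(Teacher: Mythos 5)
This statement is attributed to Lind, Marshall and Rohde and is cited from \cite{lind2010collisions}; the paper does not prove it, so there is no in-paper argument to compare against. Evaluating your proposal on its own merits, the overall plan (patch simple arcs on $[0,T)$ using the global half-Lipschitz criterion, then rescale near $T$ and compare to an explicit one-parameter model) is the right general shape, but there are two genuine gaps.

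First, the claim that the concatenation is ``automatically simple because each new arc at time $a_i$ is attached precisely at the tip of $K_{a_i}$'' is not a valid argument. A later arc, though simple and starting at the tip, could loop back and touch an earlier part of the curve. What actually rules this out is the captured-solution dichotomy: if $\gamma(s_1)=\gamma(s_2)$ with $s_1<s_2<T$, then (choosing $r\in(s_1,s_2)$ close to $s_2$ so that $[r,s_2]$ sits inside a local half-Lipschitz window) the point $g_r(\gamma(s_1))\in\mathbb{R}$ is a solution of the real Loewner equation for the shifted driving function $\lambda(\cdot+r)-\lambda(r)$ that reaches the driver in time $s_2-r$, which Lind's $\Vert\cdot\Vert_{1/2}<4$ theorem forbids. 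The paper's Section~2.1 develops exactly this reduction (captured vs.\ uncaptured driving functions); your proposal needs this substitute for the ``automatic'' step.

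Second, and more importantly, the tip identification in the last paragraph is not a proof but a statement that a proof is needed. You correctly observe that the map from driving functions to hulls (and in particular to tips) is not continuous in the supremum norm, and that one must exploit $C<4$ quantitatively. That is indeed the crux: the LMR argument passes through the real (respectively, imaginary) Loewner equation in the self-similar variables $\xi(t)=\lambda^-(\sigma(t))$, $x(t)=X^-(\sigma(t))$ (equation~(2.4) in this paper), analyzing whether a real point is captured at time $T$ when $\xi\to\kappa$. For $\kappa<4$ one shows no capture occurs, for $\kappa>4$ one produces a captured solution by comparison with the constant-driver model $\xi\equiv\kappa$, whose fixed points $(\kappa\pm\sqrt{\kappa^2-16})/2$ are real precisely when $\kappa\geq 4$. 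Without carrying out these ODE comparison estimates (or an equivalent derivative estimate along the reverse flow), the dichotomy in cases~1) and 2) is asserted, not proved. A small additional slip: the relevant model driver for the terminal behaviour is $\lambda(t)=\lambda(T)-\kappa\sqrt{T-t}$ (after rescaling, $\mu(s)=\kappa\bigl(1-\sqrt{1-s}\bigr)$), not $\kappa\sqrt{t}$; the latter models the initial behaviour at $t=0$ and is the time-reversal of what is needed here.
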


\subsection{Main Results}
In this paper we deal with two problems:
\begin{itemize}
\item $(P_1)$:  Given that a Loewner process is generated by a curve, what kind of condition on its driving function $\lambda$ will imply that this curve is simple?
\item $(P_2)$:  Given a Loewner process, what properties of its driving function imply that it is generated by a curve?
\end{itemize}
 The results of this paper are the content of two theorems. The first one is about $(P_1)$,  the other about $(P_2)$.

\begin{theorem}\label{THM1}
Let $\lambda:[0,T]\rightarrow\mathbb{R}$ be a continuous function such that the corresponding process is generated by a curve $\gamma$. For $t>0$, define
 $$ a(t)=\varliminf_{s\rightarrow t^-}\frac{|\lambda(t)-\lambda(s)|}{\sqrt{t-s}},b(t)=\varlimsup_{s\rightarrow t^-}\frac{|\lambda(t)-\lambda(s)|}{\sqrt{t-s}}.$$
If 
$$\forall t\in(0,T],\,a(t)<4\; and\; b(t)<\max\{(4,a(t)+\frac{4}{a(t)}\},$$ 
then the curve $\gamma$ is simple, and the result is sharp.
\end{theorem}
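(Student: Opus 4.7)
The plan is a proof by contradiction. Since, by hypothesis, the process is already generated by a curve $\gamma$, the failure of simplicity is equivalent to the existence of a smallest $t_0\in(0,T]$ at which $\gamma(t_0)\in\gamma([0,t_0))\cup\mathbb{R}$. I will derive a contradiction from the bounds $a(t_0)<4$ and $b(t_0)<\max\{4,a(t_0)+4/a(t_0)\}$ by showing that the normalized hull of $\gamma$ near $t_0$ must in fact be a simple slit.

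The central tool is the family of renormalized Loewner maps, for $s<t_0$,
\[
\psi_{s,t_0}(z)=\frac{g_{t_0}\circ g_s^{-1}\bigl(\lambda(s)+\sqrt{t_0-s}\,z\bigr)-\lambda(s)}{\sqrt{t_0-s}},
\]
which, by the scaling property of the Loewner equation, is the time-$1$ map of a Loewner flow on $[0,1]$ driven by $\hat\lambda_{s,t_0}(u)=(\lambda(s+u(t_0-s))-\lambda(s))/\sqrt{t_0-s}$. Applying the pointwise hypotheses on $a(\cdot)$ and $b(\cdot)$ not just at $t_0$ but at every intermediate $t'\in(s,t_0]$ close enough to $t_0$, and coupling this with an upper-semicontinuity argument on $b$, gives asymptotic control as $s\to t_0^-$ of the endpoint $\hat\lambda_{s,t_0}(1)$ by $b(t_0)$ and, more generally, of the local $1/2$-Hölder seminorm of $\hat\lambda_{s,t_0}$ on $[0,1]$.

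I would then proceed in two steps. First, the tangent-direction step: since $a(t_0)<4$, pick a sequence $s_n\uparrow t_0$ along which $\hat\lambda_{s_n,t_0}(1)\to\alpha$ with $|\alpha|\le a(t_0)$. A Carathéodory kernel compactness argument on $\{\psi_{s_n,t_0}\}$, combined with the LMR theorem applied to the rescaled drivers $\hat\lambda_{s_n,t_0}$, shows that the corresponding hulls converge to the Kufarev slit $K_\alpha$, which is a genuine line segment in $\mathbb{H}$; in particular $\gamma(t_0)\in\mathbb{H}$ and $\gamma$ admits a well-defined tangent at $t_0$, already ruling out the case $\gamma(t_0)\in\mathbb{R}$. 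Second, the oscillation-control step (assuming $\alpha\ne0$): I conjugate $\psi_{s,t_0}$ by the Möbius automorphism of $\mathbb{H}$ sending $K_\alpha$ to the vertical segment $[0,i]$ and track how the driving function transforms. A direct calculation shows that the conjugated driver has $1/2$-Hölder seminorm strictly less than $4$ precisely when $b(t_0)<|\alpha|+4/|\alpha|$; a further application of LMR then forces the conjugated hull to be a simple slit, hence $\gamma$ is simple on $[s_n,t_0]$ for all large $n$, contradicting the choice of $t_0$. The case $\alpha=0$ is handled directly by LMR with $b(t_0)<4$.

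The main technical obstacle is the bookkeeping in the second step: the constant $4$ on the right-hand side is inherited from LMR, while the asymmetric term $|\alpha|+4/|\alpha|$ arises as an artifact of how the tip-derivative of the straightening Möbius map (proportional to $4/|\alpha|$) interacts with the $1/2$-Hölder rescaling of the residual driving function; making this quantitative, and verifying that no slack is lost in the composition, is what pins down the precise expression $a+4/a$. For sharpness, one constructs explicit two-scale drivers saturating $b=a+4/a$ by alternating the Kufarev-$a$ profile with oscillations at the critical amplitude, and the same renormalized-maps machinery shows that the induced curve becomes tangent to itself at a limit time, so no larger value of $b$ is admissible.
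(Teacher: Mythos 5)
Your proposal takes a genuinely different route from the paper, but it has a serious gap at its foundation. The paper never invokes kernel convergence or Möbius conjugation; instead it reduces everything to the \emph{real} Loewner equation. The curve $\gamma$ is simple if and only if the driver $\lambda$ is ``uncaptured,'' i.e.\ no real point is swallowed in finite time. After the normalization $\lambda^-(t)=(\lambda(1)-\lambda(t))/\sqrt{1-t}$, $X^-(t)=(\lambda(1)-X(t))/\sqrt{1-t}$ and the time change $\sigma(t)=1-e^{-2t}$, the capture problem becomes the autonomous-looking ODE $\dot x=x-4/(\xi-x)$, equivalently the identity $\xi(t)=x(t)+4/\bigl(x(t)-\dot x(t)\bigr)$. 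The heart of the proof (the lemma preceding the theorem) is a direct analysis of this ODE: if $a=\varliminf\xi<4$, one shows there is a subsequence of times along which $\dot x\le 0$, $\dot x\to0$, and $x\to c\le a$, whence $\xi\ge c+4/c$, and minimizing over $c\le a$ yields $b\ge\max(4,a+4/a)$. The factor $a+4/a$ falls out of the identity, with no tangent geometry anywhere.

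The gap in your Step 1 is this: you only control $\hat\lambda_{s_n,t_0}(1)=(\lambda(t_0)-\lambda(s_n))/\sqrt{t_0-s_n}\to\alpha$ along a chosen subsequence, which is a \emph{single value} of the rescaled driver, not the whole function on $[0,1]$. Since the hypotheses are stated only in terms of $\varliminf$ and $\varlimsup$, and precisely allow $a(t_0)<b(t_0)$, the rescaled driver generically keeps oscillating and does \emph{not} converge to the constant (post-time-change) driver of a Kufarev slit. Carath\'eodory compactness will give subsequential kernel limits, but there is no reason the limit is the straight slit $K_\alpha$; in the very regime the theorem is about ($a\ne b$), it will not be. The appeal to LMR is also misplaced: LMR requires a $1/2$-H\"older bound on differences $|\lambda(t')-\lambda(t)|$ over a whole range of $t<t'$, not just the one-sided $\varliminf/\varlimsup$ at each endpoint $t$, so LMR does not directly apply to the rescaled drivers under your hypotheses. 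Step 2, where the constant $a+4/a$ is supposed to emerge from the tip-derivative of a straightening M\"obius map, is the place where all the quantitative content would have to live, and it is asserted rather than derived; making it rigorous is not a ``bookkeeping'' issue but would essentially amount to re-deriving the ODE estimate $\xi=x+4/(x-\dot x)$ in disguise. If you want to salvage the geometric viewpoint, you would first need to replace the Kufarev-slit convergence claim by an argument that works with only $\varliminf/\varlimsup$ control; as written, that step fails exactly when $a<b$, which is the case the theorem adds over LMR.
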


This theorem is a generalization of [LMR] where only the case $a(t)=b(t)<4$ is considered.

Before we state the second result, let us recall a well-known theorem (see \cite{lawler2008conformally}):
\begin{thm}[A]\label{Lawler}
A Loewner process $(g_t),t\in[0,T)$ is generated by a curve if and only if $\displaystyle\lim_{z\rightarrow\lambda(t)}g_t^{-1}(z)$ exists and defines a continuous function on $[0,T)$. 
\end{thm}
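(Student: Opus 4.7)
The statement is an equivalence, so the plan splits into two directions.

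For the implication $(\Rightarrow)$, assume the process is generated by a continuous $\gamma:[0,T)\to\overline{\mathbb{H}}$. Fix $t\in[0,T)$ and exploit the semigroup structure of the Loewner chain: the family $\hat g_s:=g_{t+s}\circ g_t^{-1}$ is itself a chordal Loewner chain with driving function $\hat\lambda(s):=\lambda(t+s)$, and by the definition of ``generated by a curve'' it is generated by $\hat\gamma(s):=g_t(\gamma(t+s))$, extended by continuity where $\gamma(t+s)$ belongs to $K_t$. Since any curve generating a Loewner chain starts at the driving value, $\hat\gamma(0)=\hat\lambda(0)=\lambda(t)$, so $g_t(\gamma(t+s))\to\lambda(t)$ as $s\to 0^+$, and therefore $g_t^{-1}$ sends a sequence of points of $\mathbb{H}$ tending to $\lambda(t)$ back to $\gamma(t+s)\to\gamma(t)$. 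To upgrade this to a limit along every approach in $\mathbb{H}$, I would invoke prime-end theory for the conformal map $g_t^{-1}:\mathbb{H}\to H_t$: the prime end of $H_t$ above $\lambda(t)$ has impression $\{\gamma(t)\}$, whence $g_t^{-1}(z)\to\gamma(t)$ as $z\to\lambda(t)$ in $\mathbb{H}$. Continuity of $t\mapsto\gamma(t)$ is inherited from the hypothesis.

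For $(\Leftarrow)$, set $\gamma(t):=\lim_{z\to\lambda(t)}g_t^{-1}(z)$ and assume that $\gamma$ is continuous on $[0,T)$. Let $\tilde K_t:=\gamma([0,t])$. The goal is to show that $H_t$ coincides with the unbounded connected component of $\mathbb{H}\setminus\tilde K_t$. The easier direction, $\tilde K_t\cap\mathbb{H}\subseteq\overline{K_t}$, follows because for each $s\leq t$ the point $\gamma(s)$ is the limit, from inside $H_s$, of points tending to $\lambda(s)\in\partial\mathbb{H}$; since such a boundary accumulation of $g_s^{-1}$ lies in $\partial K_s\subseteq\overline{K_t}$, one obtains the inclusion, and consequently $H_t$ is contained in the unbounded component of $\mathbb{H}\setminus\tilde K_t$.

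The reverse inclusion is the heart of the matter. Pick $z_0\in K_t\cap\mathbb{H}$ and let $T_{z_0}\leq t$ be its capture time. For $s<T_{z_0}$ the solution $g_s(z_0)$ is well defined and tends to $\lambda(T_{z_0})$ as $s\nearrow T_{z_0}$. Applying $g_s^{-1}$ returns $z_0$, while by hypothesis and continuity of $\gamma$ we also have $g_s^{-1}(\lambda(s)+iy)\to\gamma(s)\to\gamma(T_{z_0})$. To deduce $z_0=\gamma(T_{z_0})$ one needs the \emph{joint} limit $g_s^{-1}(z)\to\gamma(T_{z_0})$ as $(s,z)\to(T_{z_0},\lambda(T_{z_0}))$, i.e.\ the pointwise hypothesis must be promoted to locally uniform convergence in $s$. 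I would attack this via Koebe distortion together with a normal-family argument applied to the univalent maps $g_s^{-1}$, followed by Carath\'eodory kernel convergence of their images as $s\nearrow T_{z_0}$. Any $z_0\in K_t\cap\mathbb{H}$ not captured in this way as a point on $\gamma$ must lie in a bounded complementary component of $\mathbb{H}\setminus\tilde K_t$, since otherwise a path in the unbounded component would connect $z_0$ to $\infty$ inside $\mathbb{H}\setminus\tilde K_t\subseteq\mathbb{H}\setminus(K_t\cap\mathbb{H})$, contradicting $z_0\in K_t$.

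The principal difficulty is thus the uniform control of $g_s^{-1}$ near $(T_{z_0},\lambda(T_{z_0}))$ in the $(\Leftarrow)$ direction: turning the pointwise existence of radial limits into enough joint continuity to identify captured points with tips of $\gamma$. Once that is in hand, the topological argument identifying $H_t$ with the unbounded component of $\mathbb{H}\setminus\gamma([0,t])$ is routine.
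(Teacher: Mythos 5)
The paper does not prove Theorem~A: it is quoted as a known result with a citation to Lawler's book \cite{lawler2008conformally} (the reverse direction is essentially Rohde--Schramm's criterion, and the forward direction is a Carath\'eodory boundary-extension argument). So there is no ``paper's proof'' to compare against, and your sketch has to be judged on its own.

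You have the right ingredients, but both directions leave genuine gaps. In $(\Rightarrow)$, the assertion that ``the prime end of $H_t$ above $\lambda(t)$ has impression $\{\gamma(t)\}$'' is the nontrivial content, not a consequence of what precedes it: your argument only produces one path $g_t(\gamma(t+s))\to\lambda(t)$, so it exhibits $\gamma(t)$ as \emph{an} accessible point of that prime end, not as its entire impression. The standard route is to observe that $\gamma([0,t])$, being a continuous image of a compact interval, is locally connected, deduce that $\partial H_t$ is locally connected (this uses a genuine plane-topology input --- that the boundary of a complementary domain of a locally connected compactum is locally connected --- not just the curve hypothesis verbatim), and then invoke Carath\'eodory's continuity theorem for $g_t^{-1}$. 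As written, your step assumes what must be shown. In $(\Leftarrow)$ you honestly flag the missing step --- upgrading the pointwise radial limit to joint continuity in $(s,z)$ near $(T_{z_0},\lambda(T_{z_0}))$ --- and gesture at Koebe distortion, normal families, and kernel convergence without carrying them out. That is indeed where all the work lies: the known proofs establish a quantitative, locally uniform smallness estimate for the diameter of $g_s^{-1}\bigl(\lambda(s)+i(0,\varepsilon]\bigr)$ using the continuity of $\gamma$ together with the Loewner ODE, and without such an estimate you cannot identify the captured point $z_0$ with $\gamma(T_{z_0})$. Until that step is supplied, the $(\Leftarrow)$ direction is a plan rather than a proof.
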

 By \cite{pommerenke2013boundary}, we would like to consider not only this limit but also the limit along curves which will be disscussed in the last section. When the limit does not exist we call a point $z$ a limit point of $g_t^{-1}$ if there exists $z_n\rightarrow \lambda(t)$ s.t. $\displaystyle\lim_{n\rightarrow\infty}g^{-1}_t(z_n)=z$. Such a limit point will be said to be accessible if there exists a curve $\beta:[0,1)\rightarrow\bar{\mathbb{H}}$ s.t. $\beta(0)=t$ and $\beta((0,1))\subset\mathbb{H}_t$.

Our second result is
\begin{theorem}\label{THM3}
Let $\lambda:[0,T]\mapsto\mathbb{R}$ be a continuous function such that the corresponding Loewner process is generated by a curve on $[0,T)$.

Let $\displaystyle a=\varliminf_{t\rightarrow T^-}\frac{\lambda(T)-\lambda(t)}{\sqrt{T-t}},b=\varlimsup_{t\rightarrow T^-}\frac{\lambda(T)-\lambda(t)}{\sqrt{T-t}}$. 

If $a\geq 5$ and $b<a+\frac 4a$, then the Loewner process is generated by a curve $\gamma$ in $[0,T]$ and $\gamma(T)\in\mathbb{R}$ or $\gamma(T)\in\gamma([0,T))$.
\end{theorem}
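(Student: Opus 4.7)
The plan is to apply a sharpened, asymmetric version of part~2 of Theorem~[LMR] locally near $T$, using as the main tool the duality between the real and imaginary parts of the Loewner flow advertised in the abstract. Since the process is already generated by a curve $\gamma:[0,T)\to\overline{\mathbb{H}}$ by hypothesis, the content of the theorem is (i) the existence of $\gamma(T):=\lim_{t\to T^-}\gamma(t)$ in $\overline{\mathbb{H}}$, and (ii) the dichotomy $\gamma(T)\in\mathbb{R}\cup\gamma([0,T))$.

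First I would rescale. For $s\in[0,T)$ define
$$\tilde\lambda_s(u)=(T-s)^{-1/2}\bigl[\lambda(s+u(T-s))-\lambda(s)\bigr],\qquad u\in[0,1].$$
Applying the hypothesis not only at $T$ but at every intermediate time $s'\in[s,T)$ and reading off the scale-invariant ratio $|\tilde\lambda_s(1)-\tilde\lambda_s(u)|/\sqrt{1-u}$ yields uniform two-sided bounds $a-\varepsilon\leq\cdots\leq b+\varepsilon$ for $u$ close to $1$ and $s$ close to $T$. The rescaled Loewner flow from time $s$ to $T$ is thus governed by a driving function satisfying these uniform asymmetric estimates, which is the replacement for the single-ratio assumption $\kappa>4$ of~[LMR].

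The heart of the proof is a \emph{confinement estimate}: under the rescaled asymmetric hypothesis, the hull $g_s(K_T\setminus K_s)$ is contained in a fixed half-disk around $\lambda(s)$ of radius $C\sqrt{T-s}$, with $C$ depending only on $a,b$. I would establish this by a phase-plane analysis of the ODE system
$$\dot x_t=\frac{2x_t}{x_t^2+y_t^2}-\dot\lambda(t),\qquad \dot y_t=-\frac{2y_t}{x_t^2+y_t^2},$$
for $x_t+iy_t=g_t(z)-\lambda(t)$, constructing a Lyapunov-type functional in $(x_t,y_t)$ whose monotonicity is preserved precisely when $b<a+4/a$ and whose sublevel sets give the confining region. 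The threshold $a+4/a$ arises from balancing the driver's maximal oscillation $b\sqrt{T-t}$ against the minimum restoring contribution $a\sqrt{T-t}$ through the duality between $x_t$ and $y_t$; the assumption $a\geq 5$ should enter as the range where the positivity/monotonicity of the Lyapunov candidate remains uniform (note that on $[2,\infty)$ the map $\kappa\mapsto\kappa+4/\kappa$ is increasing, so the threshold is attained at the liminf $a$, which is coherent only if $a$ is bounded away from~$2$). Conformal distortion applied to $g_s^{-1}$ then transfers this to $\mathrm{diam}(\gamma([s,T)))\leq C'\sqrt{T-s}\to 0$, so $\{\gamma(t)\}_{t<T}$ is Cauchy and $\gamma(T)$ exists.

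For the location of $\gamma(T)$: if $\gamma(T)\in\mathbb{H}\setminus\gamma([0,T))$, then a disk around $\gamma(T)$ contained in $H_T$ would push forward under $g_T$ to a neighborhood of $\lambda(T)$ in $\mathbb{H}$ and force $g_T^{-1}$ to extend conformally through $\lambda(T)$, contradicting that $\lambda(T)$ is the image of the tip. Hence $\gamma(T)\in\mathbb{R}\cup\gamma([0,T))$. The principal obstacle I anticipate is the Lyapunov step in the confinement analysis: tracking an asymmetric two-sided hypothesis $a\leq\liminf\leq\limsup\leq b$ through the nonlinear Loewner ODE is substantially more delicate than the [LMR] setting $a=b$, where one can compare directly with the exact solution for the constant driver $\kappa\sqrt{T-t}$; identifying the correct duality-respecting invariant that reflects both bounds at once, and pinning down the role of the quantitative hypothesis $a\geq 5$, is the main creative step.
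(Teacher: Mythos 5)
Your proposal takes a genuinely different route from the paper, but it has two significant gaps that prevent it from being a proof.

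First, the central step — the ``confinement estimate'' that $g_s(K_T\setminus K_s)$ sits inside a half-disk of radius $C\sqrt{T-s}$, obtained from a ``Lyapunov-type functional'' for the phase-plane system — is never constructed. You acknowledge this yourself (``identifying the correct duality-respecting invariant\dots is the main creative step''), so what you have is a roadmap with its key bridge missing. The paper's strategy is different and does not go through a diameter estimate at all: it works with Theorem~A (existence and continuity of $\lim_{z\to\lambda(t)}g_t^{-1}(z)$) and characterizes the set of limit points of $g_T^{-1}$ at $\lambda(T)$ directly. Concretely, it uses Lemma~\ref{last} to produce captured real solutions whose associated $\eta=\xi-\hat x$ has a positive lower bound, Lemma~\ref{several} together with Lemmas~\ref{LIM2},\ref{LIM3},\ref{LIM4} from the imaginary/dual analysis to propagate this lower bound to every captured solution, Lemma~\ref{LPIR} to show that the interior of the interval $I=\{x:T_x=T\}$ is swallowed with small disks (so those points are not accessible from $H_T$), and a final ODE comparison in the proof of Theorem~\ref{THM3} to rule out any limit point in $\mathbb{H}$. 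The duality you advertise is thus used in the paper not as a Lyapunov functional but through the precise interplay of equations~(\ref{CON1}) and~(\ref{CON2}).

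Second, your argument for the dichotomy $\gamma(T)\in\mathbb{R}\cup\gamma([0,T))$ is incorrect. You write that if $\gamma(T)\in\mathbb{H}\setminus\gamma([0,T))$, ``a disk around $\gamma(T)$ contained in $H_T$ would push forward under $g_T$ to a neighborhood of $\lambda(T)$ in $\mathbb{H}$'' — but there is no such disk: $\gamma(T)$ is the tip of the curve, hence a boundary point of $H_T$. When the tip is a simple interior point, $g_T^{-1}$ does extend continuously to $\lambda(T)$ (the tip is a single prime end), so there is no contradiction. The dichotomy is not a soft topological fact; it requires the quantitative hypothesis $a\geq 5$, via the capturing analysis of Section~2, to force $\lambda$ to be captured at time $T$ in the sense of Definition~2.1, which is precisely what makes the tip land on $\mathbb{R}$ or on a previously visited point. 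Your proposal does not invoke the capturing mechanism at all, and without it, the conclusion about the location of $\gamma(T)$ cannot be reached.
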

Since the local Lipschitz condition in [LMR] implies that the Loewner process is generated by a curve in $[0,T)$, again this theorem generalizes the second part of [LMR] which only covers the case $a=b\geq 5$.

\section{Transformation of the Real Loewner Equation}
\subsection{Local time change}
From \cite{lawler2008conformally}, we know that if the Loewner process $(g_t)$ is generated by a self -intersecting curve $\gamma$ with $\gamma(s_1)=\gamma(s_2),s_1<s_2$, then, if we choose $s_1<r<s_2$ and  consider the driving function $\lambda_r:\,t\mapsto\lambda(t+r)$, this Loewner process is generated by $\gamma_r:\,t\mapsto g_r(\gamma(r+t))$. We then have $\gamma_r(s_2-r)=g_r(\gamma(s_2))=g_r(\gamma(s_1))\in\mathbb{R}$. If $T_z^r$ denotes  the capture time of $z$ for the driving function $\lambda_r$, then the previous conclusion is equivalent to 
$$ T^r_{g_r(\gamma(s_1))}=s_2-r<+\infty .$$
 From this it becomes clear that the key point to prove that a Loewner process is  generated by a simple curve is to decide whether  $T^s_x<+\infty$ for all $s>0$ and $x\in\mathbb{R}$.

\begin{definition}
Let $\lambda(t)$ be a driving function,and $T$ a positive real. We say that $\lambda$ is captured at time $T$ if $\exists s>0, x\in\mathbb{R}$ s.t. $T^s_x=T-s$. We say that $\lambda$ is uncaptured if it is not captured at any time $T$.
\end{definition}

The preceeding discussion may be summarized by saying that whenever the process driven by $\lambda$ is known to be generated by a curve, then this curve is simple if and only if $\lambda$ is uncaptured. The main interest of this statement is that we need only study the Loewner equation on the real line.

Hence we consider the real Loewner equation:
\begin{equation}\label{RLE}
    \dot{X}(t)=\frac{2}{X(t)-\lambda(t)}, X(0)=X_0
\end{equation}
We assume wlog that $X_0>\lambda(0)$. Notice that we then have $X(t)>\lambda(t)$ for $t\in [0,T_{X_0})$; from this and (\ref{RLE}), we find that $X$ is actually increasing on $[0,T_{X_0})$.

If we have $\lim_{t\rightarrow1^-}X(t)=\lambda(1)\iff T_{X_0}=1$, that is if we are in the captured case (at time $T=1$,which we can assume wlog), set 
$$\lambda^-(t)=(\lambda(1)-\lambda(t))/\sqrt{1-t},X^-(t)=(\lambda(1)-X(t))/\sqrt{1-t}.$$
The equation (\ref{RLE}) becomes
$$\dot{X}^-(t)=\frac{1}{2(1-t)}\left(-\frac{4}{\lambda^-(t)-X^-(t)}+X^-(t)\right).$$
We now use a time change to get rid of the time term, namely $\sigma:[0,+\infty)\rightarrow[0,1),\,t\mapsto1-e^{-2t}$. Setting $x(t)=X^-(\sigma(t)),\xi(t)=\lambda^-(\sigma(t))$, we have
\begin{equation}\label{HRLE}
\displaystyle\dot{x}(t)=x(t)-\frac{4}{\xi(t)-x(t)},\;x(0)<\xi(0).
\end{equation}
This is the real Loewner equation in the  H\"{o}lder-$\frac{1}{2}$ point of view: the  functions $\xi$ and $x$ are continuous function in$[0,+\infty)$, and  $\xi$ is the new driving function. Since $X$ is increasing in $[0,1)$, we have $\lambda(1)=X(1)>X(t)>\lambda(t)$, from which we can draw two consequences:
\begin{enumerate}
\item Necessarily $\lambda(1)$ is a record, i.e. $\lambda(1)>\lambda(t),\,t\in[0,1),$
\item $\forall t>0,\;\xi(t)>x(t)>0.$
\end{enumerate} 

Let us assume conversely that the equation (\ref{HRLE}) has a positive solution defined on $[s,+\infty)$ with initial condition $x(s)<\xi(s)$, then it follows that the equation (\ref{RLE}) is vanishing at time $1$. Indeed, since $x(t)>0,\,t>0$, we have $X(t)<\lambda(1),\,t\in[0,1)$: but on the other hand $X(t)>\lambda(t),\,t\in[0,1)$ so that $X(1)=\lambda(1)$.\\
In this case, we say that $\xi$ and the equation (\ref{HRLE}) are captured,  the solution $x$ being then said to be captured . We only consider the captured time $1$ since when $\lim_{t\rightarrow T^-}X(t)=\lambda(T)$, we can do the same transformation and time change by only changing $1$ into $T$,  the equation happening to be the same.

\begin{remark}\label{solution}
When $\xi$ is a constant $c$, which corresponds to the case when the driving function is $\lambda(t)=c-c\sqrt{1-t}$, The equation is the linear ODE for the function $t(z)$
$$\mathrm{d}t=\frac{c-z}{cz-z^2-4}\mathrm{d}z,$$
this equation can be solved directly, and the solutions are different when $c>4,c=4$ and $0<c<4$. Tranforming it back, we obtain the solution of (\ref{RLE}).
\end{remark} 

\subsection{Real Loewner Equation}
In this subsection, we will analyze equation (\ref{HRLE}) and prove theorem \ref{THM1}. To this end we assume that $x$ is a captured solution of driving function $\xi$, and we construct a correspondence between $\xi$ and $x$. We define two sets of functions as:
\begin{align*}
\mathcal{L}&=\{\varphi\in L^1([0,+\infty),\;\varphi>0,\;\lim_{t\to\infty}e^{2t}\varphi(t)=+\infty\}\\
\mathcal{I}&=T(\mathcal{L}),
\end{align*}
where 
$$T(\varphi)(t)=e^t\int_t^\infty \varphi(s)\mathrm{d}s.$$

We then rewrite ~(\ref{RLE}) as 
\begin{equation}\label{RKEY}
    \xi(t)=x(t)+\frac{4}{x(t)-\dot{x}(t)},\,x(0)<\xi(0)
\end{equation}
and let us  define the nonlinear operator $F(\varphi)=\varphi+4/(\varphi-\dot{\varphi})$. We then have:
\begin{theorem}\label{RSTRU}
The equation ~(\ref{HRLE}) with driving function $\xi$ has a captured solution if and only if there is a $\Phi\in \mathcal{I}$, such that $\xi=F(\Phi)$.
\end{theorem}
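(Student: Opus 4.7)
The identity (\ref{RKEY}) recasts equation (\ref{HRLE}) as the purely algebraic relation $\xi = F(x)$, so once this rewriting is in place there is no differential equation left to solve: every solution $x$ of (\ref{HRLE}) automatically satisfies $\xi=F(x)$ and vice versa. The whole content of the statement is thus a characterization of which functions $x$ arise as \emph{captured} solutions, and the claim is that these are exactly the elements of $\mathcal{I}$. The natural candidate for the associated $\varphi$ is $\varphi(t) := e^{-t}(x(t)-\dot{x}(t))$, which by (\ref{HRLE}) equals $4e^{-t}/(\xi(t)-x(t))$; the plan is to check that this $\varphi$ lies in $\mathcal{L}$ and that $T(\varphi)=x$.

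For the forward implication, suppose $x$ is a captured solution. From $\xi>x>0$ one reads off $\varphi>0$, and then $(e^{-t}x(t))' = -\varphi(t)<0$, so $e^{-t}x(t)$ decreases to some limit $L\geq 0$. The first key point is $L=0$: this is transparent from the Loewner interpretation since $e^{-t}x(t)=\lambda(1)-X(\sigma(t))\to 0$ at capture, and intrinsically it follows from $x<\xi$ together with the continuity of $\lambda$ at $1$, which translates to $e^{-t}\xi(t)\to 0$. Once $L=0$, integration by parts gives $\int_0^\infty\varphi(s)\,ds = x(0)<\infty$, so $\varphi\in L^1$, and the same computation on $[t,\infty)$ yields $T(\varphi)(t)=x(t)$. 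The remaining integrability condition $e^{2t}\varphi(t)=4e^t/(\xi-x)\to\infty$ corresponds, in the original variables, to $X(s)-\lambda(s)\to 0$ as $s\to 1^-$, which again is exactly the capture statement. Thus $\varphi\in\mathcal{L}$ and $\Phi:=x=T(\varphi)\in\mathcal{I}$, with $\xi=F(\Phi)$ by construction.

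For the converse, given $\Phi=T(\varphi)$ with $\varphi\in\mathcal{L}$, differentiating under the integral gives $\dot\Phi=\Phi-e^t\varphi$, so that $\Phi-\dot\Phi=e^t\varphi>0$ and $\Phi>0$ since $\varphi>0$. Setting $\xi:=F(\Phi)$, we find $\xi-\Phi=4/(e^t\varphi)>0$, and the relation $\xi=\Phi+4/(\Phi-\dot\Phi)$ rearranges exactly to $\dot\Phi=\Phi-4/(\xi-\Phi)$. Hence $\Phi$ is a positive solution of (\ref{HRLE}) defined on all of $[0,\infty)$, i.e.\ a captured solution for $\xi$.

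The most delicate step is justifying $L=0$ and $e^{2t}\varphi(t)\to\infty$ in the forward direction: both go beyond the bare ODE and the naive definition of capture, and rely on the regularity of $\lambda$ at the capture time, or equivalently on the asymptotic smallness of $e^{-t}\xi(t)$ together with the upper bound $x<\xi$. Without this input, one could in principle manufacture "solutions" with $L>0$ or with $\xi-x$ not decaying compared to $e^t$, producing elements outside $\mathcal{I}$; ruling these out is the technical heart of the proof.
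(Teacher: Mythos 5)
Your proof is correct and mirrors the paper's argument: the same substitution $\varphi=-\tfrac{d}{dt}\bigl(e^{-t}x(t)\bigr)$ and the same key inputs $e^{-t}x\to 0$, $e^{-t}\xi\to 0$ (from continuity of $X,\lambda$ at the capture time) in the forward direction, and the same algebraic rearrangement $\dot\Phi=\Phi-4/(\xi-\Phi)$ for the converse. You are slightly more explicit than the paper in verifying $T(\varphi)=x$ so that $\Phi=x$, while the paper's converse instead returns to the original variables $(\lambda,X)$ to confirm continuity at $t=1$, which is where the hypothesis $e^{2t}\varphi\to\infty$ is actually consumed.
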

\begin{proof}
Let us first assume that $x$ is a captured solution. Since $X$ and $\lambda$ are both continuous at $1$, it is easy to check that
$$\displaystyle \lim_{t\rightarrow+\infty}e^{-t}x(t)=\lim_{t\rightarrow 1}\sqrt{1-t}X^-(t)=0,$$ $$\displaystyle \lim_{t\rightarrow+\infty}e^{-t}\xi(t)=\displaystyle \lim_{t\to 1}\sqrt{1-\lambda}\lambda^-(t)=0.$$
Multiplying ~(\ref{RKEY}) by $e^{-t}$, and noticing that $\dot{x}(t)-x(t)<0$ since $x<\xi$, we have 
$$\displaystyle\lim_{t\rightarrow+\infty}\frac{4}{e^t(x(t)-\dot{x}(t))}=0,$$
and
$$\displaystyle\lim_{t\rightarrow+\infty}e^{2t}\frac{\mathrm{d}}{\mathrm{d}t}(e^{-t}x(t))=\lim_{t\rightarrow+\infty}e^t(\dot{x}(t)-x(t))=-\infty.$$
Setting $\varphi(t)=-\mathrm{d}(e^{-t}x(t))/\mathrm{d}t,$  we get
$$\displaystyle\int_0^{+\infty}\varphi(s)\mathrm{d}s<+\infty,\,\varphi(t)>0,\,\lim_{t\rightarrow+\infty}e^{2t}\varphi(t)=+\infty,$$
 and $\varphi$ is continuous, so that the function $\displaystyle \Phi:\,t\mapsto e^t\int_t^{+\infty}\varphi(s)\mathrm{d}s$ is an element of $\mathcal{I}$.

Conversely, if  $\Phi\in \mathcal{I}$ is such that $\xi=F(\Phi)$, then, by the time change, we have
$$\lambda(t)=\lambda(1)-\int_{-\frac{1}{2}\ln(1-t)}^{+\infty}\varphi(s)\mathrm{d}s-\frac{4(1-t)}{\varphi(-\frac{1}{2}\ln(1-t)),}$$
$$X(t)=\lambda(1)-\int_{-\frac{1}{2}\ln(1-t)}^{+\infty}\varphi(s)\mathrm{d}s,$$
and we easily check that $\lambda$ and $X$ satisfy the original Loewner equation (\ref{LE}) and that they are continuous and equal  at time $t=1$. 
\end{proof}
The set $\mathcal{I}$ is a cone, meaning  that if $\Phi_1, \Phi_2\in \mathcal{I}$ and $c>0$, then $\Phi_1+\Phi_2\in \mathcal{I}$ and $c\Phi_1\in \mathcal{I} $. The set $\mathcal{I}$ is  also a subspace of $C_1(\mathbb{R}_+)$. Lind has shown in \cite{lind2005sharp} that if $\forall t\geq 0,\xi(t)<4$, then $\xi\not\in F(\mathcal{I})$.

The next lemma, which is a generalization of Lind's theorem, is the key step in the proof of theorem \ref{THM1}: we define $$ a=\varliminf_{t\rightarrow T^-}\frac{\lambda(T)-\lambda(t)}{\sqrt{T-t}},\; b=\varlimsup_{t\rightarrow T^-}\frac{\lambda(T)-\lambda(t)}{\sqrt{T-t}}.$$

\begin{lemma}
If $\lambda$ is captured at time $T$, then $a$ and $b$ are both nonnegative or nonpositive and $\vert b\vert\geq \max(4,\vert a\vert+4/\vert a\vert)$ if $\vert a\vert < 4$.
\end{lemma}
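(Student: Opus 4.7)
The plan is to invoke Theorem 4.4 to write $\xi = F(\Phi)$ with $\Phi \in \mathcal{I}$ (after WLOG reducing to $T = 1$ and $X_0 > \lambda(0)$) and then extract both statements from the dynamics of $\Phi = x$ governed by the real H\"older-$1/2$ equation $\dot x = x - 4/(\xi - x)$. The sign statement follows immediately: with $X_0 > \lambda(0)$ the orbit $X$ stays above $\lambda$ until capture, so $\xi > x > 0$ and thus $a, b \geq 0$; the opposite case is the mirror image obtained by $\lambda \mapsto -\lambda$, and the two cases cannot mix because the capturing orbit never crosses $\lambda$.

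For the quantitative part, the first step is to rewrite the equation in the algebraically convenient form
\begin{equation*}
\dot x \;=\; (\xi - 4)\;-\;\frac{(\xi - x - 2)^2}{\xi - x},
\end{equation*}
obtained from the identity $(\xi - x) + 4/(\xi - x) - 4 = (\xi - x - 2)^2/(\xi - x)$. The dissipation term on the right is nonnegative, so $\dot x \leq \xi - 4$. If $b < 4$, then eventually $\xi \leq b + \varepsilon < 4$, giving $\dot x \leq -\delta < 0$ and forcing $x \to -\infty$ in contradiction with $x > 0$. This is Lind's part of the bound; the reflected case is handled analogously and delivers $|b| \geq 4$ whenever $|a| < 4$.

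For the sharper inequality $|b| \geq |a| + 4/|a|$ the plan is to combine the same differential identity with the integral representation $x(\tau) = 4 e^\tau \int_\tau^\infty e^{-s}/(\xi - x)(s)\,ds$, which yields the a priori lower bound $x(\tau) \geq 4/\sup_{s \geq \tau} \xi(s)$, and in particular $\liminf_\tau x(\tau) \geq 4/b$. Assuming for contradiction $0 < a < 4$ and $b < a + 4/a$, one extracts a sequence $\tau_n \to \infty$ with $\xi(\tau_n) \to a$ and, after further extraction, $x(\tau_n) \to \alpha \in [4/b, a]$, and exploits the averaged balance $\overline{\xi - 4} = \overline{(\xi - x - 2)^2/(\xi - x)}$ (which is forced by the boundedness of $x$ that itself follows from $4/b \leq x \leq \xi \leq b + \varepsilon$) to show that the constraint $b < a + 4/a$ is incompatible with $\xi$ spending enough time near $a$ to realize the $\liminf$. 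The main obstacle will be turning the pointwise, subsequential information about $\alpha$ into an honest time-averaged lower bound on the dissipation across the entire trajectory; the most promising routes are a weighted integration of the differential identity against a judiciously chosen test function, or a two-scale decomposition separating the regions where $\xi$ is close to $a$ from those where it is close to $b$, both aimed at quantifying the dissipation deficit precisely enough to match the sharp threshold $a + 4/a$.
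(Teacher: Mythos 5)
Your reduction to the time-changed equation, the sign statement, and the bound $|b|\ge 4$ via the identity $\dot x=(\xi-4)-(\xi-x-2)^2/(\xi-x)$ are all correct and close in spirit to the paper (the last is essentially Lind's argument). The genuine gap is the sharper inequality $|b|\ge |a|+4/|a|$, which is the actual content of the lemma: your proposal does not prove it. Extracting $\tau_n$ with $\xi(\tau_n)\to a$ and $x(\tau_n)\to\alpha\in[4/b,\,a]$ gives no control on $\dot x(\tau_n)$, so the ODE yields no lower bound on $\xi$ at or near those times; and the averaged balance $\overline{\xi-4}=\overline{(\xi-x-2)^2/(\xi-x)}$ (valid since $x$ stays bounded) only constrains time averages of $\xi$, which bound $b$ from below but carry no information about the liminf $a$, so there is no visible route from it to the threshold $a+4/a$. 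You flag this yourself as the ``main obstacle'' and only name two candidate strategies (weighted test functions, two-scale decomposition) without carrying either out; as written, the decisive step is a plan, not an argument.

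The paper closes exactly this gap with a pointwise ``turning time'' argument rather than an averaging one. At times $t_n$ with $\xi(t_n)<a+\varepsilon_n<4$, the relation $\xi=x+4/(x-\dot x)$ forces $\dot x(t_n)<0$, since otherwise $\xi(t_n)\ge x(t_n)+4/x(t_n)\ge 4$. Then either $\dot x<0$ for all large $t$, in which case $x$ decreases to some limit $c\le a$ and one finds times $s_n$ with $x(s_n)\to c$ and $\dot x(s_n)\to 0$, giving $\xi(s_n)\ge c+4/(c+2/n)$; or there is a first time $T_n\ge t_n$ with $\dot x(T_n)=0$, and since $x$ decreased on $[t_n,T_n]$ one has $x(T_n)\le \xi(t_n)\le a+\varepsilon_n$, whence $\xi(T_n)=x(T_n)+4/x(T_n)\ge \min_{d\le a+\varepsilon_n}(d+4/d)$. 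In both cases $b\ge\min_{c\le a}(c+4/c)$, which is the bound the paper states (and which equals $a+4/a$ for $a\le 2$, matching the sharpness examples). The device you are missing is to follow the trajectory from a time where $\xi$ is near $a$ (hence $\dot x<0$) forward to a time where $\dot x$ vanishes while $x$ has only decreased: that simultaneously pins down $x\lesssim a$ and $\dot x\approx 0$, and only then does $\xi=x+4/(x-\dot x)$ deliver the sharp constant.
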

\begin{proof}

Assume that $x$ is a captured solution with driving function $\xi$. From the equation [\ref{RLE}] we see that $X$ must be strictly monotone, implying that $a$ and $b$ are both either negative or positive. Changing $\lambda$ to $-\lambda$ if necessary, we assume from now on that $a\geq 0$. Using the time change, we get $\displaystyle a=\varliminf_{t\rightarrow\infty}\xi(t)$ and $\displaystyle b=\varlimsup_{t\rightarrow\infty}\xi(t).$\\
Suppose  that $0<a<4$: let $\varepsilon_0>0$ such that $a+\varepsilon_0<4.$ Because $\displaystyle\varliminf_{t\rightarrow\infty}\xi(t)=a$ there exist two sequences $t_n\uparrow+\infty$ and $\varepsilon_n\downarrow 0$ such that $\xi(t_n)<a+\varepsilon_n\leq a+\varepsilon_0<4,n\geq 1$. Since $x(t_n)+\frac{4}{x(t_n)}\geq4$, (\ref{RKEY}) together with the above imply that $\dot{x}(t_n)<0,n\geq 1$.

There are two cases:
\begin{enumerate}
\item $\exists n_0>0;\xi(t_{n_0})<a+\varepsilon_0<4$ and $\dot{x}(t)<0, \forall t\geq t_{n_0}$. Then the function $x$, being positive and decreasing on $(t_0,+\infty)$, must converge to a limit $c\geq 0$ as $t\rightarrow\infty$, which implies that $\displaystyle\varlimsup_{t\rightarrow\infty}\dot{x}(t)=0$. Thus there exists a sequence $s_n\uparrow+\infty$ such that $c\leq x(s_n)<c+\frac{1}{n}, 0>\dot{x}(s_n)>-\frac{1}{n}$. Putting this information in (\ref{RKEY}) we get 
$$\xi(s_n)=x(s_n)+\frac{4}{x(s_n)-\dot{x}(s_n)}\geq c+\frac{4}{c+\frac{2}{n}}$$
which implies that $b=+\infty$ if $c=0$ and $b\geq c+\frac{4}{c}$ otherwise. On the other hand :
$$c\leq x(t_n)\leq\xi(t_n)<a+\varepsilon,n\leq 1\Rightarrow c\leq a$$
Finally, we get $b\geq\min_{c\leq a}\{c+\frac{4}{c}\}=\max{(4,a+\frac 4a)}$.
\item If the first case does not occur we define  for all $ n\geq 1$  $T_n\geq t_n$ as the first time when $\dot{x}(T_n)=0$. Putting again this information in (\ref{RLE}), we get that 
$$\xi(T_n)=x(T_n)+\frac{4}{x(T_n)},\;x(T_n)\leq x(t_n)\leq \xi(T_n)\leq a+\varepsilon_n.$$

We conclude that $\xi(T_n)\geq\min_{d<a+\varepsilon_n}\{d+4/d\}=\max\{4,a+\varepsilon_n+\frac{4}{a+\varepsilon_n}\}$ and, as above, $\displaystyle b=\varlimsup_{t\rightarrow\infty}\xi(t)\geq\lim_{n\rightarrow\infty}\max\{4,a+\varepsilon_n+\frac{4}{a+\varepsilon_n}\}=\max\{4,a+\frac 4a\}$.
\end{enumerate}

To make sure that the bound is optimal, we give an example first in the case $a\leq 2$:

Set $\alpha_k=\frac{k(k+1)\pi}{2}+\pi-\frac{\pi}{k+1},\beta_k=\frac{k(k+1)\pi}{2}+\pi-\frac{\pi}{k+2},k=0,1,\ldots$
\begin{equation}
x(t)=
\begin{cases}
a+\frac{1}{\ln t}\cos({(k+1)(k+2)(t-\alpha_k)}) & t\in [\alpha_k,\beta_k)\\
a-\frac{1}{\ln t}\cos({\frac{t-\beta_k}{k+1}}) & t\in [\beta_k,\alpha_{k+1})).
\end{cases}
\end{equation}
It is not difficult to check that 
$$\varliminf_{t\rightarrow+\infty}\xi(t)=\lim_{n\rightarrow+\infty}\xi(\frac{\alpha_n+\beta_n}{2})=a,\varlimsup_{t\rightarrow+\infty}\xi(t)=
\lim_{n\rightarrow+\infty}\tilde{\xi}(\beta_n)=a+\frac{4}{a}.$$ Figure \ref{example2.6} shows the case $c=3/2$: the upper graph is that of $\xi$, the lower one of $x$, and the three lines are $y=0,c,c+4/c$.\\
\begin{figure}
\centering
\includegraphics[width=0.7\textwidth]{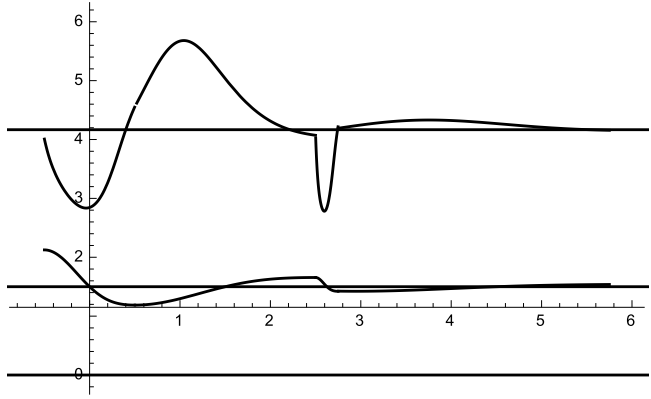} 
\caption{Example of lemma 2.4}\label{example2.6} 
\end{figure} 
If $4>a>2$, put $\alpha_k=\frac{k(k+1)\pi}{2}+\frac{a-2}{4-a}\frac{k}{k+1}\pi,\beta_k=\frac{k(k+1)\pi}{2}+\frac{a-2}{4-a}\frac{k+1}{k+2}\pi,k=0,1,\ldots$
\begin{equation}
x(t)=
\begin{cases}
2+\frac{1}{t}\cos{\frac{4-a}{a-2}(k+1)(k+2)(t-\alpha_k)} & t\in [\alpha_k,\beta_k)\\
2-\frac{1}{t}\cos{\frac{t-\beta_k}{k+1}} & t\in [\beta_k,\alpha_{k+1})
\end{cases}
\end{equation}

As in the example above, at $(\alpha_n+\beta_n)/2$ and $\beta_n$, this function will attain the two limits. This implies that $4$ is the best bound of $b$ when $4>a>2$.
\end{proof}

\begin{remark} If $a\geq 4$, then we cannot say more than the obvious inequality $b\geq a$, as seen by taking $\xi(t)\equiv a$ for which $x(t)\equiv(a+\sqrt{a^2-16})/2$ is a captured solution for $\xi$ making $b=a$.
\end{remark}

The proof of Theorem \ref{THM1} follows immediately from this lemma, since we know that for a Loewner process generated by a curve $\gamma$, $\gamma$ is simple if and only if $\lambda$ is uncaptured.

When $a=0$, we can also give a condition on the speed of convergence to $0$.
\begin{lemma}\label{brownian}
If $\lambda$ is captured at time $T$, $a=0$, $b>0$, and the limit satisfies
$$\varliminf_{t\rightarrow T^-}\frac{\lambda(T)-\lambda(t)}{\sqrt{T-t}}h(T-t)<C>0,$$
then we have
$$\varlimsup_{t\rightarrow T^-}\frac{\lambda(T)-\lambda(t)}{\sqrt{T-t}}\frac{1}{h(T-t)}> \frac{4}{C}$$
where $h$ is a positive function in $\mathbb{R}^+$  
satisfying 
$$\lim_{t\rightarrow 0^+}h(t)=\infty.$$
\end{lemma}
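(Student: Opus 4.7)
The plan is to follow the strategy of case 2 of Lemma 2.4 in a quantitative version. I would first apply the time-change of Section 2.1 (taking $T = 1$ without loss of generality) to recast the problem in terms of a pair $(\xi, x)$ of continuous positive functions on $[0, +\infty)$ satisfying (\ref{HRLE}), with $0 < x(t) < \xi(t)$ and the key identity (\ref{RKEY}) $\xi = x + 4/(x - \dot{x})$. Setting $H(t) := h(e^{-2t})$, one has $H(t) \to +\infty$, the hypothesis $\varliminf_{t \to T^-} \frac{\lambda(T) - \lambda(t)}{\sqrt{T - t}} h(T - t) < C$ becomes $\varliminf_{t \to \infty} \xi(t) H(t) < C$ (together with $\varliminf \xi = 0$ and $\varlimsup \xi > 0$), and the conclusion translates to $\varlimsup_{t \to \infty} \xi(t)/H(t) > 4/C$.

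Next I fix $\eta \in (0, C)$ and extract $\tau_n \to \infty$ with $\xi(\tau_n) H(\tau_n) < C - \eta$. Since $H(\tau_n) \to \infty$, $\xi(\tau_n) \to 0$, and since $0 < x < \xi$, also $x(\tau_n) \to 0$. Identity (\ref{RKEY}) then gives
\[
x(\tau_n) - \dot{x}(\tau_n) = \frac{4}{\xi(\tau_n) - x(\tau_n)} \geq \frac{4}{\xi(\tau_n)} > \frac{4 H(\tau_n)}{C - \eta},
\]
so $-\dot{x}(\tau_n) \to +\infty$. Imitating case 2 of Lemma 2.4's proof, because $x$ must remain positive I let $T_n > \tau_n$ be the first time $\dot{x}(T_n) = 0$; then $x$ is decreasing on $[\tau_n, T_n]$, so $x(T_n) \leq x(\tau_n)$, and identity (\ref{RKEY}) at $T_n$ yields
\[
\xi(T_n) = x(T_n) + \frac{4}{x(T_n)} \geq \frac{4}{x(\tau_n)} > \frac{4 H(\tau_n)}{C - \eta}.
\]

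To conclude one needs $H(T_n) \leq (1 + o(1)) H(\tau_n)$, for then $\xi(T_n)/H(T_n) \geq 4/(C - \eta) - o(1) > 4/C$ upon taking $\eta$ small. This reduces to showing $T_n - \tau_n \to 0$ fast enough (combined with mild continuity of $h$ near $0$, natural in applications such as $h(s) = (\log 1/s)^\alpha$) that $e^{-2T_n}/e^{-2\tau_n} \to 1$ and hence $H(T_n)/H(\tau_n) \to 1$. Heuristically, $|\dot{x}|$ starts at order $H(\tau_n)$ at $\tau_n$, while the total variation $\int_{\tau_n}^{T_n} |\dot{x}| = x(\tau_n) - x(T_n) \leq x(\tau_n) = O(1/H(\tau_n))$ is tiny, so $T_n - \tau_n = O(H(\tau_n)^{-2})$. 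The main obstacle is making this rigorous: one must propagate the lower bound on $|\dot{x}|$ along $(\tau_n, T_n)$ via an ODE analysis of $y := -\dot{x}$, using that $\xi - x = 4/(x + y)$ controls $y$ through the differential relation derived from (\ref{HRLE}). The degenerate situation where no such $T_n$ exists (i.e.\ $\dot{x} < 0$ on an entire tail) is handled in parallel to case 1 of Lemma 2.4, using sequences $s_n$ on which $x(s_n), |\dot{x}(s_n)| \to 0$ simultaneously to drive $\xi(s_n) \to \infty$ with a quantitative rate comparable to $H(s_n)$.
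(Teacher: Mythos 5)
Your approach is essentially the paper's own: after the time change, use \eqref{RKEY} to pass from ``$\xi(\tau_n)$ small'' to ``$-\dot x(\tau_n)$ huge'', then find a nearby time where $-\dot x$ has dropped and hence $\xi$ is large. The paper's proof is only three sentences: it asserts (without justification) that one can find $t_n\in(s_n,s_n+1)$ with $\xi(t_n)>4h(e^{-s_n})/C$ and then ``puts the time change back.'' You have correctly identified the two places where this is not a proof, and they are genuine issues in the paper's argument as well.

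First, the time bound. If you only search in $[\tau_n,\tau_n+1]$, the best you can do unconditionally is find $t^*_n$ with $-\dot x(t^*_n)\le\varepsilon_n$ (by the integral bound $\int_{\tau_n}^{\tau_n+1}(-\dot x)\le x(\tau_n)<\varepsilon_n$), and then $\xi(t^*_n)=x(t^*_n)+4/(x(t^*_n)-\dot x(t^*_n))>4/(2\varepsilon_n)=2/\varepsilon_n$ — the constant is $2$, not $4$. To approach $4$ one must take $t^{**}_n:=\inf\{t>\tau_n:-\dot x(t)\le\varepsilon_n/N\}$ (which exists because $\int(-\dot x)<\infty$, so $\liminf(-\dot x)=0$, and this cleanly handles both ``case 1'' and ``case 2'' of Lemma~2.4 without splitting); the same integral bound gives $t^{**}_n-\tau_n<N$ and $\xi(t^{**}_n)>4/(\varepsilon_n(1+1/N))$, but now the offset is $N$ rather than $1$. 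Second, and more seriously, putting the time change back requires comparing $h(e^{-2t^{**}_n})$ to $h(e^{-2\tau_n})$; since $t^{**}_n>\tau_n$, $e^{-2t^{**}_n}$ is closer to $0$, so a generic $h\to\infty$ could make $h(e^{-2t^{**}_n})$ arbitrarily larger than $h(e^{-2\tau_n})$ and the inequality goes the wrong way. One needs, as you say, a slow-variation hypothesis on $h$ (equivalently on $H(s)=h(e^{-2s})$, namely $H(s+N)/H(s)\to1$ for each fixed $N$) which is true in the intended application $h(s)=\sqrt{\log(1/s)}$ but is not stated in the lemma. With that extra hypothesis and the $t^{**}_n$ device above, the strict inequality $>4/C$ does follow (using the strictness in the hypothesis $\varliminf<C$ and then letting $N\to\infty$). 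So your proposal flags the right obstacles; the missing piece is the explicit $t^{**}_n$ construction with the integral bound $\int(-\dot x)\le x(\tau_n)$, which makes the time control rigorous and merges the two cases — the remaining assumption on $h$ is a gap in the lemma's statement itself, not in your reasoning.
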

\begin{proof}
In the proof of the last lemma, use the same symbols, we can find a sequence $s_n\uparrow+\infty$ such that $\xi(s_n)<C/h(e^{-s_n})$. Hence there exists another sequence $t_n$ such that $s_n<t_n<s_n+1$ and $\xi(t_n)>4h(e^{-s_n})/C$. Put the time change back, we get the inequality immediately.
\end{proof}
As we mentioned before, when $\kappa>4$, the $\mathrm{SLE}_{\kappa}$-curve is not simple, meanning that $\lambda(t)=\sqrt{\kappa} B_t$ has captured times. We  define the time set 
$$I_{\kappa}\dot{=}\{t|\gamma_{\kappa}(t)\in \mathbb{R}\ \mathrm{or}\ \exists s<t, \mathrm{s.t.} \gamma_{\kappa}(s)=\gamma_{\kappa}(t)\}.$$
 It is easy to see that $I_{\kappa_1}\subset I_{\kappa_2}$ if $\kappa_1<\kappa_2$(a rigorous proof is similar as lemma \ref{COM}). Since the captured time must be a left local extreme point of the Brownian motion, the Lebesgue measure of $I_{\kappa}$ is $0$, which means that $I_{\kappa}$ is an exceptional set for Brownian motion. By L$\mathrm{\acute{e}}$vy's modulus of continuity of Brownian motion(see \cite{morters2010brownian}), almost surely, for all $T\in I_{\kappa}$, we have:
\begin{equation}\label{bulang}
    \varlimsup_{t\rightarrow T^-}\frac{|\sqrt{\kappa} B_T-\sqrt{\kappa} B_t|}{\sqrt{(T-t)\log(1/(T-t))}}\leq\sqrt{2\kappa}
\end{equation}
Using lemma \ref{brownian}, we deduce the following proposition about the local behaviour of the points in $I_{\kappa}$:
\begin{proposition}
If $\kappa>4$, then almost surely, for all $T\in I_{\kappa}$, we have:
$$\varliminf_{t\rightarrow T^-}\frac{|B(T)-B(t)|}{\sqrt{T-t}}\sqrt{\log\frac{1}{T-t}}\geq 2\frac{ \sqrt{2}}{\kappa}.$$
\end{proposition}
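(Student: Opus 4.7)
The plan is to combine Lemma~\ref{brownian} with the L\'evy modulus of continuity \eref{bulang} by a single contrapositive. I would apply Lemma~\ref{brownian} to the driving function $\lambda(t)=\sqrt{\kappa}\,B_t$ with the gauge $h(s)=\sqrt{\log(1/s)}$, which is positive near $0$ and tends to $+\infty$ as $s\downarrow 0$, hence is admissible in the lemma.

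First I would fix a Brownian sample path lying in the full-probability event on which \eref{bulang} holds and on which $B$ is nowhere H\"older-$\tfrac12$. For any $T\in I_{\kappa}$, the time-change discussion of Section~2.1 says that $\lambda$ is captured at $T$. The inequality to be proved is trivial if its left-hand side is infinite, so I would restrict attention to those $T$ for which the refined liminf is finite; this forces
$$
a=\varliminf_{t\to T^-}\frac{|\lambda(T)-\lambda(t)|}{\sqrt{T-t}}=0,
$$
while $b=+\infty$ follows from the nowhere-H\"older property. Replacing $\lambda$ by $-\lambda$ if needed to fix the sign of the increments near $T$, the hypotheses of Lemma~\ref{brownian} are satisfied.

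Rewriting \eref{bulang} with our gauge $h$ gives
$$
\varlimsup_{t\to T^-}\frac{|\lambda(T)-\lambda(t)|}{\sqrt{T-t}}\,\frac{1}{h(T-t)}\leq \sqrt{2\kappa}.
$$
The contrapositive of Lemma~\ref{brownian} asserts that whenever the above limsup is at most $4/C$, one has $\varliminf_{t\to T^-}|\lambda(T)-\lambda(t)|\,h(T-t)/\sqrt{T-t}\geq C$. I would take $C=4/\sqrt{2\kappa}=2\sqrt{2}/\sqrt{\kappa}$, and then divide by $\sqrt{\kappa}$ to pass from $\lambda$ to $B$, obtaining the claimed bound $2\sqrt{2}/\kappa$.

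The hardest point, if any, is the almost-sure uniformity in $T\in I_{\kappa}$: the argument must be run on a single full-measure event on which both \eref{bulang} and the nowhere-H\"older property hold simultaneously for every $T$. Both are classical sample-path properties of Brownian motion that are deterministic consequences of the chosen path, so this uniformity is automatic and no additional measure-theoretic work should be required; the rest is a one-line application of the contrapositive of Lemma~\ref{brownian}.
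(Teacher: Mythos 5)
Your proof is correct and follows essentially the same contrapositive-of-Lemma~\ref{brownian}-plus-\eref{bulang} route that the paper takes. The only difference worth noting: the paper's proof writes $h(T-t)=\log(1/(T-t))$, which appears to be a typo --- your choice $h(s)=\sqrt{\log(1/s)}$ is the one that actually matches both the gauge appearing in \eref{bulang} and the $\sqrt{\log}$ factor in the proposition's statement, and it makes the arithmetic ($C=4/\sqrt{2\kappa}=2\sqrt{2}/\sqrt{\kappa}$, then dividing by $\sqrt{\kappa}$ to get $2\sqrt{2}/\kappa$) come out exactly; your extra care in verifying the hypotheses $a=0$ and $b>0$, and in noting that the claim is vacuous when the liminf is infinite, tightens what the paper leaves implicit.
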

\begin{proof}
If this is not true, in lemma \ref{brownian}, we set $\lambda=\sqrt{\kappa}B$ and $h(T-t)=\log(1/(T-t))$. Then we get the inequality which is contradiction to \ref{bulang}.
\end{proof}
We give now an alternative condition on the driving function for $(P_1)$, based on the observation that $\xi$ can not be less than $4$ for a long time, because otherwise the solution $x$ would become negative.

We define the function $G$ as:
\begin{equation}\label{piece2}
    G(x)=
\begin{cases}
4-x & x\geq 2 \\
4/x & 0\leq x\leq 2
\end{cases}
\end{equation}

\begin{proposition}\label{test}
If $\exists \,t_2>t_1\geq0$ such that
\begin{equation}
\int_{t_1}^{t_2} G(\xi(s))\mathrm{d}s\geq\xi(t_1)
\end{equation}
then no solution $x$ with initial time $t_0\leq t_1$ is captured.
\end{proposition}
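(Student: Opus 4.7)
I would argue by contradiction: suppose $x$ is a captured solution of (\ref{HRLE}) with some initial time $t_0 \leq t_1$. Recall that for such a solution $0 < x(t) < \xi(t)$ on its entire interval of existence, so in particular $\xi(t) > 0$ on $[t_0,+\infty)$ and $G(\xi(t))$ is well-defined along the trajectory.

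The heart of the matter is the pointwise differential inequality
\[ \dot x(t) \;\leq\; -G(\xi(t)) \qquad \text{for all } t \geq t_0. \]
To obtain it I would fix $t$ and maximize the right-hand side of (\ref{HRLE}), viewed as a function of $x$ alone, over $x\in(0,\xi(t))$. A direct differentiation gives $\frac{d}{dx}\bigl(x - \tfrac{4}{\xi - x}\bigr) = 1 - \tfrac{4}{(\xi - x)^2}$, which vanishes only at $x = \xi - 2$. When $\xi \geq 2$ this critical point lies in $(0,\xi)$ and produces the maximum value $\xi - 4 = -(4-\xi) = -G(\xi)$; when $0 < \xi < 2$ the derivative is strictly negative throughout $(0,\xi)$, so the supremum $-4/\xi = -G(\xi)$ is approached as $x \to 0^+$. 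Either way the claim follows.

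Armed with this differential inequality, I would integrate from $t_1$ to $t_2$:
\[ x(t_2) \;\leq\; x(t_1) - \int_{t_1}^{t_2} G(\xi(s))\, ds \;\leq\; x(t_1) - \xi(t_1) \;<\; 0, \]
where the middle inequality is the standing hypothesis and the strict one uses $x(t_1) < \xi(t_1)$, again because $x$ is captured. This contradicts positivity of the captured solution at $t_2$, completing the argument.

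The only delicate point is the pointwise bound itself, and even that is routine: the case $0 < \xi < 2$ has no interior critical point, so the optimum is approached at the boundary rather than attained, but a one-line monotonicity check on $x \mapsto x - 4/(\xi - x)$ handles this cleanly. I do not anticipate any other obstacles.
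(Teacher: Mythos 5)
Your proof is correct and takes essentially the same approach as the paper: maximize the right-hand side of $\dot x = x - 4/(\xi - x)$ over $x \in (0,\xi)$ to obtain the pointwise bound $\dot x(t) \leq -G(\xi(t))$, then integrate from $t_1$ to $t_2$ and invoke the hypothesis together with $x(t_1) < \xi(t_1)$ to force $x(t_2) < 0$, contradicting positivity of a captured solution. If anything your presentation is a bit cleaner: the paper's displayed chain passes through an intermediate step $\dot x(s) \leq -G(x(s))$, which is not what its own lemma about $h(t) = t - 4/(c-t)$ establishes (and is in fact false in general, e.g.\ when $\xi - x > 1$ and $x\ge 2$); the correct and intended bound is exactly the one you prove directly, $\dot x \leq -G(\xi)$.
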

\begin{proof}
If $x$ is a captured solution, consider the function $\displaystyle h(t)=t-\frac{4}{c-t}$, $c$ is a positive constant, whose derivative is $\displaystyle h'(t)=1-\frac{4}{(c-t)^2}$, For $t\in (0,c)$,
$$h(t)\leq
\begin{cases}
h(c-2)=c-4 & c\geq 2 \\
h(0)=-\frac{4}{c} & 2>c\geq 0
\end{cases},$$
or, in other words, $h(t)\leq -G(c).$
Since $x(t)<\xi(t)$, we have
\begin{align*}
x(t_2)=\int_{t_1}^{t_2}\dot{x}(s)\mathrm{d}s+x(t_1)&\leq\int_{t_1}^{t_2}-G(x(s))\mathrm{d}s+x(t_1)\\
&\leq \int_{t_1}^{t_2}-G(\xi(s))\mathrm{d}s+\xi(t_1)\leq 0,
\end{align*}
which is impossible since $x(t)>0$, being captured. Hence there is no captured solution starting at a time $s<t_1$.
\end{proof}
By this proposition, if a Loewner process is generated by a curve and if the driving function satisfies the condition above, then the curve is simple. It is the case, for example, if $\xi(t)\leq 2,t\in(0,2)$. As we mentioned before, from  \cite{lawler2008conformally}, we know that if we already know that the Loewner equation is generated by a curve, then the curve is simple if and only if  the driving function is not captured after any time translation. Using this, we can prove half of Lind's H\"{o}lder-$\frac{1}{2}$ norm theorem directly:
\begin{corollary}
If the Loewner chain $(g_t)$ is generated by the curve $\gamma$, and the H\"{o}lder-$\frac{1}{2}$ norm of the driving function is less than 4, then $\gamma$ is a simple curve.
\end{corollary}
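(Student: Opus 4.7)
The plan is to combine Proposition \ref{test} with a uniform bound on the transformed driving function $\xi$ coming from the H\"older-$1/2$ hypothesis.

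First I would reduce the statement to showing that $\lambda$ is uncaptured. By the discussion opening Section 2.1, once a Loewner process is known to be generated by a curve $\gamma$, one has that $\gamma$ is simple if and only if every time-translate $\lambda_r : t \mapsto \lambda(t+r)$ fails to be captured. Since $\Vert \lambda_r \Vert_{1/2} \leq \Vert \lambda \Vert_{1/2} < 4$ for every $r \geq 0$, it suffices to prove that any continuous $\lambda$ with $\Vert \lambda \Vert_{1/2} = C < 4$ is not captured at any positive time $T$.

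Next I would apply the normalization and time change of Section 2.1 at a hypothetical capture time $T$, producing a captured solution $x$ of (\ref{HRLE}) with driving function $\xi$ on $[0,+\infty)$. Unwinding the definitions,
$$\xi(t) \;=\; \frac{\lambda(T)-\lambda(s(t))}{\sqrt{T-s(t)}}, \qquad s(t)=T(1-e^{-2t})\in[0,T),$$
so the H\"older-$1/2$ hypothesis yields the uniform bound $|\xi(t)|\leq C<4$ for all $t\geq 0$. After replacing $\lambda$ by $-\lambda$ if necessary, I may assume $\xi(t)\in[0,C]$ for every $t\geq 0$.

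At this point I would invoke Proposition \ref{test}. The piecewise function $G$ from (\ref{piece2}) is continuous and strictly decreasing on $[0,4]$, so $G(\xi(s))\geq G(C)=:m>0$ for all $s$. Choosing $t_1=0$ and any $t_2\geq C/m$,
$$\int_{0}^{t_2} G(\xi(s))\,ds \;\geq\; m\,t_2 \;\geq\; C \;\geq\; \xi(0),$$
and Proposition \ref{test} rules out any captured solution starting at a time $t_0\leq 0$, contradicting the assumption that $\xi$ is captured. Since the time $T$ was arbitrary and the argument applies to every translate $\lambda_r$, we conclude that $\gamma$ is simple.

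I do not anticipate a serious obstacle here: the substantive work has already been done in Proposition \ref{test} and in the characterization simple $\iff$ uncaptured. The only mildly delicate step is the opening reduction via time translations, which is nothing more than a restatement of the setup of Section 2.1; everything else is a one-line estimate on the auxiliary function $G$.
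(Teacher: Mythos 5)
Your proof is correct and follows essentially the same route as the paper's: both reduce simplicity of $\gamma$ to the non-existence of captured solutions of (\ref{HRLE}), observe that the H\"older hypothesis forces $\xi(t)\leq C<4$ after the transformation, and close with Proposition~\ref{test}. The only cosmetic difference is that you fix $t_1=0$ and absorb starting times into an explicit time-translation reduction (using the scale-invariance of $\|\cdot\|_{1/2}$), whereas the paper leaves $t_1>0$ arbitrary and takes $t_2=t_1+c/(4-c)$ to handle all starting times at once; the two are equivalent.
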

\begin{proof}
Assume that $\xi$ is the driving function after time change at time $T$. Since $\xi(t)<c<4$, for $t_1>0$ we can choose $t_2=t_1+c/(4-c)$, so that the condition of proposition \ref{test} holds. This means there is no captured solution at time $T$ since $t_1$ is arbitrary. Hence we get that $\xi$ is not captured at any time, implying that $\gamma$ is a simple curve.
\end{proof}
\subsection{An application of theorem \ref{THM1}:  Weierstrass functions}
In this section, we consider the Loewner equation with driving function $cW_b$ where $c$ is a positive constant and $W_b$ is the Weierstrass function 
\begin{equation}\label{WF}
    W_b(t)=\sum_{n=1}^{\infty}\frac{\cos(b^nt)}{\sqrt{b^n}}
\end{equation}

The local behavior of the Weierstrass function has been studied since long ago, see \cite{hardy1916weierstrass}. This function shares some properties with Brownian motion. In \cite{glenn2017loewner}, it is proven that 
\begin{equation}\label{WFN}
    \Vert W_b\Vert_{1/2}\leq\frac{b}{\sqrt{b}-1}+\frac{2}{1-\frac{1}{\sqrt{b}}}=C(b)\sim\sqrt{b},b\rightarrow+\infty
\end{equation}
By the main theorem of \cite{lind2005sharp}, if $c<4/C(b)\sim4/\sqrt{b}$, then the Loewner equation driven by $cW_b$ is generated by a quasislit curve, in particular a simple curve. Since theorem \ref{THM1} partly improves the result of \cite{lind2005sharp}, we may apply it to improve this result too.
\begin{theorem}\label{WFT}
$\forall l_0>1,\exists C>0$ s.t. if $c<C$, then the Loewner equation with driving function $cW_b$ is generated by a quasislit curve when $b>l_0$.
\end{theorem}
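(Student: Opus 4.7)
The plan is to apply Theorem \ref{THM1} pointwise to the driving function $\lambda=cW_b$ by producing estimates on $a_{cW_b}(t)$ and $b_{cW_b}(t)$ that are uniform in $t\in(0,T]$ and in $b>l_0$. The previous threshold $c<4/C(b)$ from \cite{lind2005sharp} used only the H\"older-$1/2$ norm, i.e.\ it asked for $b_{cW_b}(t)<4$ and hence degenerates as $b\to\infty$. Theorem \ref{THM1} replaces this by the weaker pointwise condition $b(t)<a(t)+4/a(t)$, which we can arrange to hold uniformly once we exploit the fact that rapid oscillation forces $a_{cW_b}(t)$ to be much smaller than the full H\"older-$1/2$ norm of $cW_b$.

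For $b_{cW_b}(t)$ I would use (\ref{WFN}) directly, giving $b_{cW_b}(t)\le c\,C(b)$ for every $t$. For $a_{cW_b}(t)$ I would exploit the arithmetic structure of $W_b$. Assuming first that $b$ is a positive integer, choose $h_N=2\pi/b^N\downarrow 0$: by periodicity the high-frequency tail cancels exactly, $\cos(b^n(t-h_N))=\cos(b^n t)$ for every $n\ge N$, and the low-frequency part is controlled by a geometric telescoping estimate
$$|W_b(t)-W_b(t-h_N)|\le h_N\sum_{n<N}b^{n/2}\le\frac{2\pi\,b^{-N/2}}{\sqrt{b}-1}.$$
Dividing by $\sqrt{h_N}=\sqrt{2\pi}\,b^{-N/2}$ yields the uniform bound $|W_b(t)-W_b(t-h_N)|/\sqrt{h_N}\le\sqrt{2\pi}/(\sqrt{b}-1)$, whence $a_{cW_b}(t)\le c\sqrt{2\pi}/(\sqrt{b}-1)$.

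Set $A=c\sqrt{2\pi}/(\sqrt{b}-1)$ and $B=c\,C(b)$. Since $a\mapsto a+4/a$ is decreasing on $(0,2]$, the hypothesis $b(t)<a(t)+4/a(t)$ is implied by $A\le 2$ together with $AB<4$. Substituting $C(b)=\sqrt{b}(\sqrt{b}+2)/(\sqrt{b}-1)$, this rearranges to
$$c^2\,\sqrt{2\pi}\,\frac{\sqrt{b}(\sqrt{b}+2)}{(\sqrt{b}-1)^2}<4.$$
The function $u\mapsto u(u+2)/(u-1)^2$ is monotone decreasing on $(1,\infty)$, so its supremum over $b>l_0$ is attained at $l_0$ and is finite; this determines the required threshold $C=C(l_0)>0$. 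With this choice Theorem \ref{THM1} applies uniformly in $b>l_0$ and delivers simpleness of $\gamma$; upgrading to quasislit then relies on the strict, uniform character of the inequalities combined with the quasiconformal-extension criterion of \cite{Marshall2005The}.

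The main obstacle is extending the cancellation argument beyond integer $b$. For real $b>1$, no single $h_N$ makes $b^n h_N$ an exact multiple of $2\pi$ for all $n\ge N$ simultaneously, so the clean cancellation fails. One must instead construct $h_N\downarrow 0$ by a Diophantine/density argument for which $\{b^n h_N/(2\pi)\}$ is simultaneously small for all $n$ in a window growing with $N$; this recovers the same $1/(\sqrt{b}-1)$-type decay at the price of a worse explicit constant, which ultimately controls the admissible size of $C(l_0)$.
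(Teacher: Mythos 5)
Your overall strategy is the right one, and the arithmetic you carry out (bounding $b(t)$ by $c\,C(b)$ and $a(t)$ by $O(c/(\sqrt{b}-1))$, then verifying the pointwise condition of Theorem~\ref{THM1} uniformly over $b>l_0$) matches the paper's intent. But there are three genuine gaps, each of which the paper resolves with a device you don't have.

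First, your estimate of $a(t)$ rests on exact cancellation of the high-frequency tail, $\cos(b^n(t-h_N))=\cos(b^nt)$ for $n\ge N$ with $h_N=2\pi/b^N$, which requires $b\in\mathbb{Z}$. Your proposed Diophantine repair for non-integer $b$ is not carried out and is not obviously salvageable (simultaneous smallness of $\{b^n h/(2\pi)\}$ over a growing window of $n$ is a strong requirement). The paper's Lemma~\ref{WFL} takes $t_m=2\pi/b^{m-1}$ and \emph{does not rely on cancellation} of the tail at all: the single term $n=m-1$ vanishes because $\sin(b^{m-1}t_m/2)=\sin\pi=0$, and for $n\ge m$ it simply uses $|\sin|\le1$, so that after dividing by $\sqrt{t_m}$ the tail contributes a convergent geometric series $\sum_{n\ge m}(b^n t_m)^{-1/2}$. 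This works verbatim for every real $b>1$ and is the device you are missing.

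Second, you apply Theorem~\ref{THM1} directly to $\lambda=cW_b$, but that theorem presupposes the Loewner process is \emph{already} generated by a curve; for the infinite Weierstrass sum this is not known in advance. The paper sidesteps this by working with the partial sums $W_b^N$: these are smooth, their local $1/2$-H\"older norm on short intervals is $<4$, so by Lind--Marshall--Rohde the process for $cW_b^N$ is generated by a locally quasislit curve. Theorem~\ref{THM1} is then applied to $cW_b^N$, and one passes to $W_b$ by uniform convergence plus a compactness argument. You need this approximation scheme; simpleness of the $W_b$-curve cannot be asserted before existence of the curve is secured.

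Third, the step from ``simple curve'' to ``quasislit'' is not a formality: you invoke a ``strict, uniform character of the inequalities,'' but the paper has to establish two quantitative lemmas (Lemma~\ref{CAP} giving $x(T)-\lambda(T)<(C_1+2)\sqrt T$, and the companion lower bound $x(T)-W_b^N(t)>C_2\sqrt{T-t}$) in order to control the conformal welding ratio $\frac{x-\lambda(T)}{\lambda(T)-\phi(x)}$ uniformly in $N$, and then a second welding inequality as in Lind--Marshall--Rohde, before a compactness argument delivers the quasislit conclusion. These estimates are the bulk of the work and your proposal does not supply them.
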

If $b$ is small, the proof of theorem \ref{WFT} follows easily from the methods of \cite{marshall2005loewner} and \cite{lind2005sharp}, so we may assume $b$ is large. Let $W_b^N$  be the $N^{\mathrm{th}}$ partial sum of the right side of (\ref{WF}). It is obvious that $\{W_b^N\}$ converge to $W_b$ uniformly on the real line when $N$ goes to infinity, and the estimate of the H\"{o}lder -$1/2$ norm above applies to $W_b^N$ as well.

We thus only need to prove that the Loewner equation which is driven by $cW^N_b$ is generated by a $K$  quasislit curve for all sufficiently large $N$. We first prove a lemma showing that $cW^N_b$ satisfies the hypothesis of theorem \ref{THM1} for large $N$.

\begin{lemma}\label{WFL}
For all $N$ and $T$ we have
$$\varliminf_{t\rightarrow T^-}\frac{|W^N_b(T)-W^N_b(t)|}{\sqrt{T-t}}<(\sqrt{\pi}+\frac{1}{\sqrt{\pi}})\frac{\sqrt{2}}{\sqrt{b}-1}\sim \frac{c_1}{\sqrt{b}}$$
\end{lemma}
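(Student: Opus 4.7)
The plan is to produce an explicit sequence $t_k\to T^-$ along which the quotient $|W_b^N(T)-W_b^N(t_k)|/\sqrt{T-t_k}$ is controlled by the stated constant, with bounds uniform in $N$ and $T$. Since each $W_b^N$ is a trigonometric polynomial the $\varliminf$ at a fixed $T$ is of course $0$; the point of the lemma is the explicit $N$-independent bound, which is what is actually needed to feed Theorem \ref{THM1} through the approximation $W_b^N\to W_b$ in the proof of Theorem \ref{WFT}.

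The natural resonant scale for the $k$-th mode is $\pi/b^k$, so I would set $\tau_k=\pi/b^k$ and expand by prosthaphaeresis ($\cos A-\cos B=-2\sin\tfrac{A+B}{2}\sin\tfrac{A-B}{2}$):
$$W_b^N(T)-W_b^N(T-\tau_k)\;=\;-2\sum_{n=1}^{N}\frac{\sin\!\bigl(b^n T-\tfrac{1}{2}b^n\tau_k\bigr)\sin\!\bigl(\tfrac{1}{2}b^n\tau_k\bigr)}{\sqrt{b^{\,n}}}.$$
Then I would split the sum at the resonant index $n=k$. For $n<k$ the inner argument $\tfrac{1}{2}b^n\tau_k=\tfrac{\pi}{2}b^{n-k}$ is small, so $|\sin(\tfrac{1}{2}b^n\tau_k)|\le \tfrac{\pi}{2}b^{n-k}$, giving a geometric series in $\sqrt{b^{\,n}}/b^k$ that sums to $O\!\bigl(b^{-k/2}/(\sqrt{b}-1)\bigr)$. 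For $n\ge k$ I would use $|\sin|\le 1$, producing another geometric tail $\sum_{n\ge k}2b^{-n/2}=2b^{-k/2}/(\sqrt{b}-1)\cdot\sqrt{b}/(\sqrt{b}-1)$ of the same order. Dividing by $\sqrt{\tau_k}=\sqrt{\pi}\,b^{-k/2}$ makes $k$ drop out, so the quotient is bounded by a constant depending only on $b$, uniformly in $N$ and $T$; passing to $\varliminf$ as $k\to\infty$ yields the lemma.

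The main obstacle is matching the precise constant $(\sqrt{\pi}+1/\sqrt{\pi})\sqrt{2}/(\sqrt{b}-1)$. A crude application of $|\sin(b^n T-\tfrac{1}{2}b^n\tau_k)|\le 1$ already gives a bound of the correct shape $C/(\sqrt{b}-1)$ but with a slightly larger $C$. The extra factor $\sqrt{2}$ should be gained by an averaging/root-mean-square argument: rather than fixing $\tau_k=\pi/b^k$, I would let $\tau$ range over a window $[\pi/b^k,2\pi/b^k]$ and integrate $|W_b^N(T)-W_b^N(T-\tau)|^2/\tau$ against $d\tau$. The different frequencies are nearly orthogonal on such a window, so $\sin^2$ averages to $\tfrac{1}{2}$, replacing the uniform bound $1$ by its quadratic mean $1/\sqrt{2}$; a Chebyshev-type argument then selects, in each window, a $\tau_k$ realising the improved constant. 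Combined with the above splitting into resonant low and high frequencies, this gives the stated estimate; the contributions $\sqrt{\pi}$ and $1/\sqrt{\pi}$ come respectively from the low-frequency linear part (which carries a factor $\pi$) and the high-frequency geometric tail (which carries a factor $2$), each divided by the $\sqrt{\pi}$ arising from $\sqrt{\tau_k}$.
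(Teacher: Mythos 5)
Your plan breaks down at the choice of scale. Setting $\tau_k=\pi/b^k$ puts the ``resonant'' factor at $\sin(b^k\tau_k/2)=\sin(\pi/2)=1$, which is the \emph{worst} possible value, not a favourable one. If you then bound that term crudely by $|\sin|\le 1$ and divide by $\sqrt{\tau_k}=\sqrt{\pi}\,b^{-k/2}$, the single index $n=k$ alone contributes $2/\sqrt{b^k\tau_k}=2/\sqrt{\pi}$, which is $O(1)$ in $b$, and the whole tail $n\ge k$ sums to $2\sqrt{b}/(\sqrt{\pi}(\sqrt{b}-1))\to 2/\sqrt{\pi}$ as $b\to\infty$. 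That is not ``slightly larger'' than the target $\sim c_1/\sqrt{b}$; it is of the wrong order entirely, and no amount of tuning the high/low split fixes it. The proposed root-mean-square averaging over $\tau\in[\pi/b^k,2\pi/b^k]$ only replaces the factor $1$ by its quadratic mean $\approx 1/\sqrt{2}$, i.e. improves the \emph{constant}, but the resonant contribution remains $O(1)$, so the argument still cannot reach a bound decaying like $1/\sqrt{b}$.

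The paper's proof uses a different, and essential, trick: it takes $t_m=2\pi/b^{m-1}$, so that the factor $\sin(b^{m-1}t_m/2)=\sin\pi=0$ kills the resonant term $n=m-1$ \emph{exactly}. The sum then splits cleanly into $n\le m-2$ (small-argument bound $|\sin(b^nt_m/2)|\le b^nt_m/2$, giving a low-frequency geometric piece $\sim\sqrt{2\pi}/(\sqrt{b}-1)$) and $n\ge m$ (crude $|\sin|\le 1$, but now the geometric tail starts one index \emph{past} the resonance, giving $\sim\sqrt{2/\pi}/(\sqrt{b}-1)$). The factor $\sqrt{2}$ and the split into $\sqrt{\pi}$ and $1/\sqrt{\pi}$ emerge directly from $\sqrt{t_m}=\sqrt{2\pi}\,b^{-(m-1)/2}$, not from any averaging. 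To repair your proof you should replace $\tau_k=\pi/b^k$ by $t_m=2\pi/b^{m-1}$ (so the resonant sine vanishes), drop the index $n=m-1$ from the sum, and then run your low-frequency/high-frequency splitting as planned; the averaging step is unnecessary and should be removed.
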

\begin{proof}
Set $t_m=2\pi/b^{m-1}$,$m\in\mathbb{N}$, when $N>m-1$, we have
\begin{align*}
    &\frac{|W^N_b(T)-W^N_b(T-t_m)|}{\sqrt{t_m}}=\left|2\sum_{n=0}^{N}\frac{1}{\sqrt{b^n t_m}}\sin(b^nT-\frac{b^nt_m}{2})\sin(\frac{b^nt_m}{2})\right|\\
    =&2\left|\sum_{n=0}^{m-2}\frac{\sin(\frac{b^nt_m}{2})}{\sqrt{b^n t_m}}\sin(b^nT+\frac{b^nt_m}{2})+\sum_{n=m}^{N}\frac{\sin(\frac{b^nt_m}{2})}{\sqrt{b^n t_m}}\sin(b^nT+\frac{b^nt_m}{2})\right|\\
    <&\sum_{n=0}^{m-2}\frac{b^nt_m}{\sqrt{b^n t_m}}+2\sum_{n=m}^{N}\frac{1}{\sqrt{b^n t_N}}=\frac{\sqrt{2\pi}}{\sqrt{b}}\frac{1-\frac{1}{\sqrt{b^m}}}{1-\frac{1}{\sqrt{b}}}+\frac{\sqrt{2}}{\sqrt{\pi b}}\frac{1-\frac{1}{\sqrt{b^{N-m}}}}{1-\frac{1}{\sqrt{b}}}\\
    <&(\sqrt{\pi}+\frac{1}{\sqrt{\pi}})\frac{\sqrt{2}}{\sqrt{b}-1}
\end{align*}
when $N\leq m$, similar estimate shows that
$$\frac{|W^N_b(T)-W^N_b(T-t_m)|}{\sqrt{t_m}}<\frac{\sqrt{2\pi}}{\sqrt{b}-1}<(\sqrt{\pi}+\frac{1}{\sqrt{\pi}})\frac{\sqrt{2}}{\sqrt{b}-1}$$
which finishes the proof.
\end{proof}

Using the above lemma , we compute  $a(t)$ and $b(t)$ in theorem \ref{THM1}. For the driving function $cW_b^N$,
$$a(t)\leq c(\sqrt{\pi}+\frac{1}{\sqrt{\pi}})\frac{\sqrt{2}}{\sqrt{b}-1}\leq \frac{C_1c}{\sqrt{b}},b(t)\leq c\left(\frac{b}{\sqrt{b}-1}+\frac{2}{1-\frac{1}{\sqrt{b}}}\right)\leq C_2c\sqrt{b}$$
when $b>9$, for some constants $C_1,C_2$. Then $a(t)<2$ if $c<6/C_1$, and $b(t)<a(t)+\frac{4}{a(t)}$ if $b>\frac{2C_1}{C_2}$ and $c<\left(\frac{2}{C_1C_2}\right)^{1/3}$,
 which is the hypothesis of theorem \ref{THM1}. So if the Loewner equation in theorem \ref{WFL} is generated by a curve, this curve must be simple.

The rest of the proof needs the definition of conformal welding. If the Loewner equation is generated by a simple curve $\gamma$, then for all $t<T$, $g_T$ maps $\gamma(t)$ to two points in $\mathbb{R}$. Actually, $g_T(\gamma[0,T])=[X_1,X_2]$. We find a reverse homeomorphism $\phi:[X_1,\lambda(T)]\mapsto[\lambda(T),X_2]$ s.t. $g_T^{-1}(x)=g_T^{-1}(\phi(x))$. This function $\phi$ is called the conformal welding. In \cite{marshall2005loewner}, the authors proved that
if a Loewner equation is generated by a curve $\gamma$, then $\gamma$ is a quasislit curve if and only if $\exists M>0$ s.t. for all $T$, 
$$\frac{1}{M}<\frac{x-\lambda(T)}{\lambda(T)-\phi(x)}<M$$
for all $x$ and 
$$\frac{1}{M}<\frac{\phi(x)-\phi(y)}{\phi(x)-\phi(z)}<M$$
for all $\lambda(T)\leq x<y<z$ with $z-y=y-x$.

If the driving function is $W^N_b$, it is easy to prove that the H\"older-$1/2$ norm of $W^N_b$ in a sufficiently small interval is less than $4$. Hence the Loewner equation driven by $W^N_b$ is generated by a simple curve which is a juxtaposion of quasislit segments. Now, using the compactness argument as in \cite{marshall2005loewner},  we only need to proof that for all $N$, the conformal welding of the driving function $W^N_b$ satisfies the above two inequations with $M$  uniformly bounded in $N$.
We prove only the first one here since the proof of the second one is almost the same as in \cite{lind2005sharp}.
This proof needs two lemmas; the first one has independent interest and will be used in the proof of theorem 1.2.
\begin{lemma}\label{CAP}
Let $T>0$ be a fixed time, and $\lambda$  a driving function satisfying $\forall t\in[0,T), |\lambda(T)-\lambda(t)|/\sqrt{T-t}<C_1$. Then there exists a solution $x$ of the real Loewner equation (\ref{RLE}) defined in $[0,T]$, s.t. 
$$x(T)-\lambda(T)<(C_1+2)\sqrt{T}.$$
\end{lemma}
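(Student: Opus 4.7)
My plan is to reduce to an explicit ODE via a comparison argument.

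First, I normalize by scaling and translation: setting $\tilde X(s)=(X(sT)-\lambda(T))/\sqrt T$ and $\tilde\lambda(s)=(\lambda(sT)-\lambda(T))/\sqrt T$ leaves equation (\ref{RLE}) invariant, so I may assume $T=1$ and $\lambda(1)=0$. The hypothesis becomes $|\lambda(t)|<C_{1}\sqrt{1-t}$ on $[0,1)$ and the goal becomes $X(1)<C_{1}+2$.

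Next, I introduce the extremal driver $\lambda_{0}(t):=C_{1}\sqrt{1-t}$, which dominates $\lambda$ pointwise. Fix $\delta>0$ small and take the initial condition $X(0)=Y(0)=C_{1}+\delta$, where $X$ solves (\ref{RLE}) with driver $\lambda$ and $Y$ with driver $\lambda_{0}$. Both $X,Y$ are monotone increasing and bounded below by $C_{1}+\delta>C_{1}$, while $|\lambda(\cdot)|$ and $\lambda_{0}(\cdot)$ are bounded above by $C_{1}$ on $[0,1]$; hence both solutions stay strictly above their drivers and are defined on all of $[0,1]$. A standard first-contact comparison then gives $X\le Y$ on $[0,1]$: at any point $t^{\ast}$ where $X(t^{\ast})=Y(t^{\ast})$, the inequality $\lambda(t^{\ast})\le\lambda_{0}(t^{\ast})$ forces $\dot X(t^{\ast})\le\dot Y(t^{\ast})$. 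In particular $X(1)\le Y(1)$.

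To compute $Y(1)$ explicitly, I substitute $v=\sqrt{1-t}$ and $Y(t)=vZ(v)$, which turns the equation into the separable ODE
\begin{equation*}
v\,\frac{dZ}{dv}\;=\;-\,\frac{Z^{2}-C_{1}Z+4}{Z-C_{1}},
\end{equation*}
whose first integral is elementary (an $\arctan$ when $C_{1}<4$, a rational-plus-logarithm when $C_{1}=4$, two logarithms when $C_{1}>4$). Because $Z(v)\sim c/v$ as $v\to 0^{+}$, the limit $Y(1)=\lim_{v\to 0^{+}}vZ(v)$ is an elementary function of $X(0)$ and $C_{1}$. Letting $\delta\to 0^{+}$ yields a limiting value $\kappa(C_{1})$: it equals $2\exp(\theta\cot\theta)$ with $\theta=\arccos(C_{1}/4)$ in the subcritical regime $0<C_{1}<4$, equals $2e$ at $C_{1}=4$, and equals $a_{+}^{\,a_{+}/D}\,a_{-}^{-a_{-}/D}$ with $a_{\pm}=(C_{1}\pm D)/2$, $D=\sqrt{C_{1}^{2}-16}$, when $C_{1}>4$.

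Finally, a direct calculus check establishes $\kappa(C_{1})<C_{1}+2$ strictly for every $C_{1}>0$. In the subcritical range $0<C_{1}<4$ this is the inequality $e^{\theta\cot\theta}<2\cos\theta+1$ on $(0,\pi/2)$, which follows from the endpoint expansions ($\theta\cot\theta\to 1$ as $\theta\to 0^{+}$, giving $e<3$; and both sides tend to $1$ as $\theta\to\pi/2^{-}$) together with monotonicity of the difference; the cases $C_{1}\ge 4$ are handled by direct algebra. Continuity in $\delta$ then provides $\delta>0$ small enough that $Y(1)<C_{1}+2$, and the comparison $X(1)\le Y(1)$ concludes the proof. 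The principal obstacle is the transcendental inequality $\kappa(C_{1})<C_{1}+2$, which is sharp as $C_{1}\to 0^{+}$ (both sides tend to $2$) and requires careful matching of the three explicit forms of $\kappa$ near the critical value $C_{1}=4$.
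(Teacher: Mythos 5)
Your approach shares the paper's core idea---dominate $\lambda$ by a simpler driver and apply a first-contact comparison---but you choose the extremal square-root driver $\lambda_0(t)=C_1\sqrt{T-t}$ as the comparison object, which forces a nontrivial explicit solve and a transcendental inequality at the end. The paper compares instead against the \emph{constant} driver $\hat\lambda\equiv\lambda(T)+C_1\sqrt T$, which also dominates $\lambda$ on $[0,T]$ and for which the real Loewner equation has the elementary solution $\hat x(t)=\hat\lambda+2\sqrt t$. Starting $x(0)=\hat\lambda$ (which is allowed: $\lambda(0)<\hat\lambda$ strictly), the same first-contact comparison you use gives $x(T)<\hat x(T)=\lambda(T)+(C_1+2)\sqrt T$ at once, with no computation and no transcendental estimate. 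Your comparison against $\lambda_0$ would in principle produce the sharper constant $\kappa(C_1)$, but the lemma only asks for $C_1+2$, so nothing is gained for the extra effort.

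The concrete gap is the step ``a direct calculus check establishes $\kappa(C_1)<C_1+2$.'' In the subcritical regime this reduces to $e^{\theta\cot\theta}<2\cos\theta+1$ on $(0,\pi/2)$, which you justify from endpoint values plus ``monotonicity of the difference.'' But both sides tend to $1$ as $\theta\to\pi/2^-$, so the endpoint values alone do not fix the sign, and the claimed monotonicity (or even one-signedness) of $2\cos\theta+1-e^{\theta\cot\theta}$ is neither obvious nor proved; the case $C_1\ge4$ is similarly dismissed as ``direct algebra'' with no details. There is also a quantifier wrinkle: with $Y(0)=C_1+\delta$, $\delta>0$, a constant-driver comparison yields only $Y(1)<C_1+\sqrt{\delta^2+4}\ge C_1+2$, so your route is genuinely forced to compute $\kappa(C_1)$ and then pass $\delta\to 0^+$. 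Taking $X(0)=C_1$ exactly (legitimate because $\lambda(0)<C_1$ strictly) and comparing against the constant-driver solution $C_1+2\sqrt t$ removes both the explicit solve and the unproven inequality, and recovers the paper's two-line argument.
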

\begin{proof}
We consider the solution with initial value $x(0)=\lambda(T)+C\sqrt{T}$: since $x$ is increasing and $\lambda(t)<\lambda(T)+C\sqrt{T-t}<x(0)$, this solution exists in $[0,T]$. Let $\hat{x}$  be the solution of (\ref{RLE}) with driving function $\hat{\lambda}\equiv x(0)$ and initial value $\hat{x}(0)=x(0)^+$, then $\hat{x}(t)=x(0)+2\sqrt{t}$. Because $\hat{\lambda}>\lambda$, we have $x<\hat{x}$ which give us $x(T)<\hat{x}(T)=2\sqrt{T}+x(0)=(2+C)\sqrt{T}$.
\end{proof}
\begin{lemma}
If $c$ and $b$ satisfy the hypothesis of theorem \ref{WFT}, then for all $N$, there exists a constant $C_2>0$ s.t. for all $t<T$ and $x(0)>W^N_b(t)$, the solution $x$ of the real Loewner equation which is driven by $cW^N_b$ with initial value $x(t)=x_0$ satisfies 
$$x(T)-W_b^N(t)>C_2\sqrt{T-t}.$$
\end{lemma}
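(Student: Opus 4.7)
The proof parallels Lemma \ref{CAP}, producing a lower bound through comparison from below. Translate time so that $t=0$, and write $\lambda(s)=cW_{b}^{N}(s)$ and $C_{1}=cC(b)$. The H\"older-$\tfrac{1}{2}$ estimate (\ref{WFN}), which applies to each partial sum $W_{b}^{N}$ uniformly in $N$, gives $\lambda(s)\ge\lambda(0)-C_{1}\sqrt{s}$ on $[0,T]$. Let $\underline{\lambda}(s):=\lambda(0)-C_{1}\sqrt{s}$ be this pointwise lower envelope, and let $\underline{X}$ solve (\ref{RLE}) with driving $\underline{\lambda}$ and the same initial value $\underline{X}(0)=x_{0}$. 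Writing $\dot{X}=2/(X-\lambda)\ge 2/(X-\underline{\lambda})$ while $\dot{\underline{X}}=2/(\underline{X}-\underline{\lambda})$, the standard ODE comparison theorem (the right-hand side is locally Lipschitz wherever a solution stays above $\underline{\lambda}$) yields $\underline{X}(s)\le X(s)$ on $[0,T]$.

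The driver $\underline{\lambda}$ is scale-invariant, so set $Y(s)=(\underline{X}(s)-\lambda(0))/\sqrt{s}$. A direct computation turns (\ref{RLE}) into
\begin{equation*}
s\,\frac{dY}{ds}\;=\;-\,\frac{Y^{2}+C_{1}Y-4}{2\,(Y+C_{1})}.
\end{equation*}
The numerator vanishes at $\beta^{*}:=(-C_{1}+\sqrt{C_{1}^{2}+16})/2>0$, and this is an attracting equilibrium of the autonomous right-hand side. Since $x_{0}>\lambda(0)$, we have $Y(s)\to+\infty$ as $s\to 0^{+}$. By uniqueness for the autonomous ODE on $\{s>0\}$, the trajectory $Y(\cdot)$ cannot cross the constant solution $Y\equiv\beta^{*}$, hence $Y(s)>\beta^{*}$ for every $s>0$, i.e. $\underline{X}(s)>\lambda(0)+\beta^{*}\sqrt{s}$.

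Combining the two steps and translating the origin back to $t$ gives
\begin{equation*}
X(T)-cW_{b}^{N}(t)\;\ge\;\underline{X}(T)-cW_{b}^{N}(t)\;>\;\beta^{*}\sqrt{T-t},
\end{equation*}
so $C_{2}:=\beta^{*}$ works; it depends only on $c$ and $b$ through $C_{1}=cC(b)$, and is therefore uniform in $N$, $t$, $T$ and $x_{0}$. The only delicate point is the comparison argument at $s=t$, where $\lambda$ and $\underline{\lambda}$ coincide: this is handled by applying the comparison theorem on $[t+\varepsilon,T]$ (where both solutions are strictly above their respective drivers) and letting $\varepsilon\downarrow 0$.
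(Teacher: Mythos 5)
Your argument is internally correct for the statement exactly as printed: the forward comparison against the scale-invariant envelope $\underline\lambda(s)=\lambda(0)-C_1\sqrt s$, the reduction to the autonomous ODE $s\,\tfrac{dY}{ds}=-(Y^2+C_1Y-4)/(2(Y+C_1))$, and the conclusion $\underline X(s)>\lambda(0)+\beta^*\sqrt s$ are all sound. (Your ``delicate point'' at $s=t$ is in fact a non-issue: since $x_0>\lambda(t)$, both vector fields are already nonsingular at the initial time.)

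However, your route diverges from the paper's in a way that matters. The paper's proof controls $x(T)-cW^N_b(T)$, not $x(T)-cW^N_b(t)$: the printed $W^N_b(t)$ is a typo, as one can see both from the paper's own argument (which shifts coordinates by $W^N_b(T)$ before comparing with $\hat x_l$) and from the way the lemma is used (Lemma~\ref{CAP} gives an \emph{upper} bound on $x(T)-\lambda(T)$, and the quasislit welding estimate needs matching two-sided bounds on $x(T)-\lambda(T)$, not on $x(T)-\lambda(t)$). Your bound does not yield this: since $\beta^*(\beta^*+C_1)=4$, one has $\beta^*<C_1$ as soon as $C_1>\sqrt2$, and $C_1=cC(b)\sim c\sqrt b$ is unbounded as $b\to\infty$ under the hypotheses of Theorem~\ref{WFT}, so the deduction $X(T)-\lambda(T)>(\beta^*-C_1)\sqrt{T-t}$ is vacuous. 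Nor can you simply rerun your comparison with the backward envelope $\lambda(T)-C_1\sqrt{T-s}$: once $C_1>4$ that driver is captured, and the comparison solution may well hit the driver at time $T$. This is precisely why the paper first proves, via Lemma~\ref{WFL} and the uncaptured property coming from Theorem~\ref{THM1}, that $x$ has already climbed above $cW^N_b(T)$ by the intermediate time $t+(T-t)/b$, and only then runs the scale-invariant comparison against the driver $-C(b)\sqrt{T-s}$. That two-stage structure is the essential content of the lemma, and it is what your proof is missing.
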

\begin{proof}
We claim that $x(t+(T-t)/b)>W^N_b(T)$ on the whole line. If $x(t)>W^N_b(T)$, then because $x$ is increasing and exists in all $\mathbb{R}$, the claim is true. If $x(t)\leq W^N_b(T)$ we can use the transformation and time change. It is easy to check that $W^N_b$ satisfy the condition of $\ref{THM1}$.The claim then follows from the proof of theorem \ref{THM1} and lemma  \ref{WFL}.

Since $(\Vert W^N_b\Vert_{1/2})_{N>0}$ are uniformly bounded by $C(b)$ , we have 
$$W^N_b(T)-W^N_b(s)<C(b)\sqrt{T-s},\forall s\in[t+(T-t)/b,T].$$
Let $l=x(t+(T-t)/b)-W^N_b(T)$: considering the solution $\hat{x}_l$ of (\ref{RLE}) driven by $-C(b)\sqrt{T-s}$ with initial value $\hat{x}_l(t+(T-t)/b)=l$, we have $x(T)-W_b^N(t)>C\sqrt{T-t}>\hat{x}_l(T)\geq\min_{l>0}\{\hat{x}_l(T)\}$.

So we only need to prove that the last term is larger than $C(T-t)$. There are two methods, the first one is by following the solution of remark \ref{solution}. The second one is by using the self-similar property of this special solution. We omit the details.
\end{proof}

Now let $C_1$ and $C_2$ to be the constants in the above two lemmas. Since $C_1$ and $C_2$  depend only on $b$, we may choose $M$ to be $(C_1+2)/C_2$, which finishes the proof of theorem \ref{WFT}.

\section{Imaginary Loewner Equation}
\subsection{Time change}
In this section we discuss the properties of the imaginary part of $g_t(z)$. More precisely, if we write $g_t(z)=X_t(z)+Y_t(z)i$, then the Loewner equation may be written as a couple of real ODEs:
\begin{equation}\label{TLE}
\begin{cases}
\displaystyle\dot{X}(t)=\frac{2(X(t)-\lambda(t))}{(X(t)-\lambda(t))^2+Y(t)^2}\\
\displaystyle\dot{Y}(t)=-\frac{2Y(t)}{(X(t)-\lambda(t))^2+Y(t)^2}
\end{cases}
\end{equation}
with initial value $X(0)=x_0,Y(0)=y_0>0$.
Setting $\theta(t)=X(t)-\lambda(t)$, the second equation of ~(\ref{TLE}) becomes
\begin{equation}\label{ILE}
\dot{Y}(t)=-\frac{2Y(t)}{\theta(t)^2+Y(t)^2}, Y(0)=y_0>0
\end{equation}
In the rest of this section, we consider a continuous function $\theta:\,[0,+\infty)\to \mathbb{R}_+$, and study the  corresponding equation (\ref{ILE}). We call this equation the imaginary Loewner equation with driving function $\theta$. As in the real case, we address the following question: is there a solution of (\ref{ILE}) that converges to $0$ in finite time?

\begin{definition}
If there exists an initial value $y_0>0$ and $T>0$ such that $Y(T)=0$ while $ Y(t)>0$ if $t<T$,where $Y$ is the solution of the equation with initial value $y_0$, we say that $\theta$ is a vanishing driving function at $T$ and that $Y$ a vanishing solution at $T$.
\end{definition}

This problem will be shown below to be connected to the second problem of the introduction. To study the equation (\ref{ILE}), we perform the same transformation as for the real equation. Namely, if we assume, as we may wlog that $T=1$, we set 
$$Y_-(t)=Y(t)/\sqrt{1-t},\theta_-(t)=\theta(t)/\sqrt{1-t}$$
The equation becomes 
$$\dot{Y}_-(t)=\frac{1}{2(1-t)}\left(Y_-(t)-\frac{4Y_-(t)}{\theta_-(t)^2+Y_-(t)^2}\right)$$

Using the same time change $\sigma(t)=1-e^{-2t}$, setting $y(t)=Y_-(\sigma(t))$ and $\eta(t)=\theta_-(\sigma(t))$, we have

\begin{equation}\label{CON1}
\dot{y}(t)=y(t)-\frac{4y(t)}{\eta(t)^2+y(t)^2},y(0)=y_0>0
\end{equation}

Let now $X$ be a solution of (\ref{RLE}); if (\ref{RLE}) has a captured solution $X_0$ at time 1, we define $W(t)=X(t)-X_0(t),\theta(t)=X_0(t)-\lambda(t)$. Then (\ref{RLE}) becomes
$$\dot{W}(t)=-\frac{2W(t)}{\theta(t)(W(t)+\theta(t))},W(0)=w_0$$
As before, we set
$$W_-(t)=W(t)/\sqrt{1-t},\theta_-(t)=\theta(t)/\sqrt{1-t}$$
and after the time change $\sigma(t)=1-e^{-2t},\,w(t)=W_-(\sigma(t)),\,\eta(t)=\theta_-(\sigma(t))$. The function $w$ obeys the following ODE:

\begin{equation}\label{CON2}
\dot{w}(t)=w(t)-\frac{4w(t)}{\eta^2(t)+\eta(t)w(t)}, w(0)=w_0
\end{equation}
The two equations (\ref{CON1}) and (\ref{CON2}) are very similar and share many properties which will help us in connection with problem $(P_2)$.

\subsection{Transition for the imaginary equation}
In this section, we consider the vanishing property of the imaginary equation. If $\eta\equiv 0$, it does vanish. So we only consider the case when the driving function is not identically $0$. Like in the real case, the vanishing property undergoes a phase-transition: 
\begin{theorem}\label{transition}
If $\eta(t)\geq2\sqrt{T-t}, \,t\in[0,T]$, then $\eta$ does not vanish at time $T$. If there exists a constant $C<2$ such that $\eta(t)<C\sqrt{T-t},\,t\in [0,T]$, then $\eta$ is  a vanishing driving function at time $T$.
\end{theorem}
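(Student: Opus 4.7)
The strategy is to work throughout with the post-time-change equation~(\ref{CON1}), namely $\dot y = y - 4y/(\eta^2+y^2)$ on $[0,\infty)$. Taking $T=1$ without loss of generality, the change of variables $Y_-(t)=Y(t)/\sqrt{1-t}$ followed by $\sigma(s)=1-e^{-2s}$ turns the hypothesis $\theta(t)\ge 2\sqrt{1-t}$ into the uniform bound $\eta(s)\ge 2$ on $[0,\infty)$, and similarly $\theta(t)\le C\sqrt{1-t}$ with $C<2$ into $\eta(s)\le C<2$. Moreover $Y(\sigma(s))=y(s)e^{-s}=:\phi(s)$, so a vanishing solution of the imaginary Loewner equation corresponds to a strictly positive solution $y$ of~(\ref{CON1}) for which $\phi(s)\to 0$ as $s\to\infty$. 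The two claims are thus: (a) if $\eta\ge 2$ then no positive $y$ has $\phi(\infty)=0$; (b) if $\eta\le C<2$ then at least one positive $y$ does.

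For part~(a), fix any $y_0>0$ and let $y$ be the corresponding solution. Since $\eta^2\ge 4$, we have $\eta^2+y^2\ge 4$, hence $\dot y=y(1-4/(\eta^2+y^2))\ge 0$, so $y$ is non-decreasing. Better still, using only $\eta^2\ge 4$,
$$\dot y \ge y\Bigl(1-\frac{4}{4+y^2}\Bigr)=\frac{y^3}{4+y^2}.$$
If $y$ were bounded by some $M$ this would give $\dot y\ge y^3/(4+M^2)$, which by separation of variables produces finite-time blow-up of $y$, a contradiction. Hence $y\to\infty$. For $s$ large $4/(\eta^2+y^2)\le 4/y^2$ is arbitrarily small, so $\dot y\ge (1-\varepsilon)y$ eventually, giving $y(s)\ge c\,e^{(1-\varepsilon)s}$ for all sufficiently large $s$. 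Integrating the identity $\dot\phi/\phi=-4/(\eta^2+y^2)$ against this lower bound shows $\int_0^\infty 4/(\eta^2+y^2)\,ds<\infty$, whence $\phi(\infty)>0$ and $\eta$ is not a vanishing driving function.

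For part~(b), pick any $y_0\in(0,\sqrt{4-C^2})$ and let $y$ denote the solution. So long as $y<\sqrt{4-C^2}$ we have $\eta^2+y^2<C^2+(4-C^2)=4$, hence $\dot y<0$ strictly, so the bound $y<\sqrt{4-C^2}$ is preserved for all time. Thus $y$ is globally defined, positive, and strictly decreasing to some limit $L\in[0,y_0)$. If $L>0$, then for $s$ large $y\le L+\varepsilon$, so $\eta^2+y^2\le C^2+(L+\varepsilon)^2<4$ for $\varepsilon$ small, whence $\dot y\le -\delta<0$ uniformly in $s$; this contradicts the convergence of $y$ to $L$. Therefore $L=0$, i.e.\ $\phi(s)=y(s)e^{-s}\to 0$, so $y$ realises a vanishing solution.

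The main technical point is the blow-up argument in~(a): a priori $\eta$ may be unbounded, so one cannot naively compare with the model $\eta\equiv 2$. The key observation is that $\eta^2\ge 4$ already yields an \emph{autonomous} lower bound $\dot y\ge y^3/(4+y^2)$ that is completely $\eta$-free, after which the blow-up dichotomy is an elementary scalar-ODE calculation. The rest of both proofs is then a routine monotonicity-and-limit analysis on the phase portrait of the transformed equation.
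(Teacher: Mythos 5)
Your proof is correct. It takes a genuinely different route from the paper's primary proof, which reduces to the model driving function $\theta(t)=c\sqrt{T-t}$ via the comparison Lemma~\ref{COM} and then solves the resulting ODE explicitly in Lemma~\ref{TRAN} by treating $t$ as a function of $y$ (a linear ODE in that variable). Your argument instead stays entirely inside the time-changed equation~(\ref{CON1}) and extracts everything from the sign of $\dot y/y = 1-4/(\eta^2+y^2)$. In fact the paper contains a second, ``alternative proof'' of Theorem~\ref{transition} in Section~3.3 that is close in spirit to yours, but even that version leans on Lemma~\ref{4.4} (vanishing $\Leftrightarrow$ $0<y<2$ for all $t$) and Lemma~\ref{COM}; your write-up is more self-contained, because the autonomous lower bound $\dot y\ge y^3/(4+y^2)$ replaces the appeal to Lemma~\ref{4.4} in part~(a), and in part~(b) you verify $y\to 0$ directly instead of invoking a comparison. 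What the paper's primary approach buys that yours does not is the sharper quantitative statement of Lemma~\ref{TRAN}, namely the exact value $\lim_{\varepsilon\to 0}y_\varepsilon(T)=\sqrt{T(4-c)}$ for $c<4$, which is used elsewhere; what your approach buys is brevity and independence from the comparison machinery. One small remark: in part~(b) it would suffice to observe that $y$ stays bounded (indeed $y<2$ for all time), since the definition of vanishing only asks that $e^{-s}y(s)\to 0$; establishing $L=0$ is correct but slightly more than is needed.
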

The proof of this theorem requires two lemmas:
\begin{lemma}\label{COM}
Let $\eta_1$ and $\eta_2$ be two driving functions for the equation (\ref{CON1}). If $\forall t\geq0,\eta_1(t)\geq\eta_2(t)$ and if $\eta_1$ is vanishing at time $T$, then $\eta_2$ is also vanishing at time $T$. And the maximal vanishing solution of $\eta_1$ is not larger than the maximal vanishing solution of $\eta_2$.
\end{lemma}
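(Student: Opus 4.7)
The proof turns on the monotonicity of the right-hand side of~(\ref{CON1}) in the driver: setting
\[
F(\eta, y) \;=\; y \;-\; \frac{4y}{\eta^{2} + y^{2}}, \qquad \partial_\eta F \;=\; \frac{8\eta y}{(\eta^{2}+y^{2})^{2}} \;\ge\; 0,
\]
we see that for any $y>0$ the vector field $F$ is non-decreasing in $\eta\ge 0$. Combined with the local Lipschitz regularity of $F$ in $y$ on $(0,+\infty)$, this yields two standard comparison principles, both proved by the usual $\varepsilon$-perturbation barrier argument: \emph{(i) cross-driver:} if $\eta_1\ge\eta_2$ and solutions $y_i$ of the corresponding equations satisfy $y_1(s)=y_2(s)$, then $y_1\ge y_2$ on $[s,+\infty)$ as long as both are defined; \emph{(ii) same-driver:} for a fixed driver, solutions with different initial values stay strictly ordered by the order of their initial values as long as they remain positive.

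For the first claim of the lemma, let $y_1$ be a vanishing solution of $\eta_1$ at time $T$ with initial value $y_0$, and let $\tilde y$ denote the $\eta_2$-solution with the same initial value $y_0$. Principle~(i) gives $\tilde y\le y_1$ on $[0,T]$, so $\tilde y(T)\le 0$; since $y\equiv 0$ is an equilibrium of~(\ref{CON1}) whenever $\eta_2>0$, $\tilde y$ must reach $0$ at some first time $T'\le T$. To upgrade this to vanishing at \emph{exactly} time $T$, I would run a shooting argument: let $\tau(y_0^\ast)\in(0,+\infty]$ be the vanishing time of the $\eta_2$-solution with initial value $y_0^\ast$. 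Principle~(ii) together with continuous dependence on initial data makes $\tau$ non-decreasing and continuous on the set where it is finite; for $y_0^\ast\ge 2$ one has $\dot y\ge 0$, so $\tau(y_0^\ast)=+\infty$, while $\tau(y_0)\le T$ by the step just above. An intermediate value argument then produces some $y_0^\ast$ with $\tau(y_0^\ast)=T$.

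For the maximality assertion, let $y_1^\ast,y_2^\ast$ denote the maximal vanishing solutions at $T$ of $\eta_1,\eta_2$ (i.e.\ those with the largest admissible initial values). Suppose, towards a contradiction, that $y_1^\ast(s)>y_2^\ast(s)$ for some $s\in[0,T)$, and let $\bar y$ be the $\eta_2$-solution starting from $(s,y_1^\ast(s))$. Applying~(i) from $s$ onward to $y_1^\ast$ and $\bar y$ gives $\bar y\le y_1^\ast$, hence $\bar y(T)\le 0$; applying~(ii) to $\bar y$ and $y_2^\ast$ under the common driver $\eta_2$ (with $\bar y(s)>y_2^\ast(s)$) forces $\bar y(T)>y_2^\ast(T)=0$, a contradiction. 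The main technical point I expect is the continuity of $\tau$ at the critical initial value: to control it, I would use that the equilibrium $y=0$ prevents negative overshoot, so that as $y_0^\ast$ varies continuously the vanishing time either changes continuously or jumps to $+\infty$, which is precisely the dichotomy the intermediate value argument needs.
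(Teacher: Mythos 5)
Your proposal uses the same basic machinery as the paper — the monotonicity of the vector field in the driver and the resulting cross-driver and same-driver comparison principles — so the overall spirit is the same. The difference lies in the topological step that forces the vanishing time to be exactly $T$, and there your argument has a gap. You introduce the vanishing-time function $\tau$, assert that it is non-decreasing and \emph{continuous where finite}, note $\tau>T$ for $y_0\ge 2$ and $\tau\le T$ for $y_0=Y_1(0)$, and invoke the intermediate value theorem. The continuity of $\tau$ is not justified and is in fact the whole difficulty: the vector field $-2Y/(\theta^2+Y^2)$ fails to be locally Lipschitz at the degenerate equilibrium $Y=0$ (indeed $\partial_Y f\sim -2/\theta^2$ blows up as $\theta\to 0$), so the usual continuous-dependence argument for hitting times breaks down precisely where you need it. Moreover the ``dichotomy'' you propose (continuous, or jumps to $+\infty$) does not secure the conclusion: $\tau$ could approach $T$ strictly from below on the set where it is finite and then jump directly to $+\infty$, so $T$ would never be attained and your intermediate value argument produces nothing. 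In fact, since a solution can only hit zero at a time where the driver vanishes, $\tau$ takes values in the zero set of $\eta_2$, which may be discrete; in that case $\tau$ is a step function and is certainly not continuous, so the IVT approach as stated cannot work.

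The paper avoids invoking any continuity of $\tau$. It sets $A=\{y_0:\;$the $\eta_2$-solution vanishes by time $T\}$, $a=\sup A$, and, writing $T'$ for the vanishing time of the $a$-solution $Y_2$, supposes $T'<T$ and constructs the $\eta_2$-solution $Y_2'$ with $Y_2'(T')=Y_1(T')>0$. Backward same-driver comparison gives $Y_2'(0)>a$, so $Y_2'(0)\notin A$; forward cross-driver comparison against $Y_1$ on $[T',T]$ shows $Y_2'$ vanishes by $T$, so $Y_2'(0)\in A$ — contradiction. This is the key device your proposal is missing, and it is what lets the paper rule out $T'<T$ without ever discussing regularity of $\tau$. (The paper does still leave implicit that $a\in A$, i.e.\ that $Y_2$ actually vanishes by $T$; but that is a milder gap than the continuity you rely on.)

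Your separate argument for the maximality assertion — constructing $\bar y$ through $(s,y_1^\ast(s))$, squeezing it below $y_1^\ast$ via cross-driver comparison and above $y_2^\ast$ via same-driver comparison — is essentially sound, modulo care at the boundary $Y=0$, and is actually a useful complement since the paper's proof only establishes maximality implicitly through $a\ge Y_1(0)$.
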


\begin{proof}
We assume that $Y_1$ is a vanishing solution at time $T$ driven by $\eta_1$  with initial value $Y_1(0)=y_1$. If the solution with initial value $y_1$ and driving function $\theta_2$ is also vanishing at time $T$, then we are done. Otherwise, we have at least one solution driven by $\theta_2$ which is vanishing at a time $<T$. We consider the set $A$ of initial values which make the solutions driven by $\eta_2$ vanish at a time  $\leq T$. Let $a$  be the least upper-bound of $A$, $Y_2$ the solution with driving function $\eta_2$ and initial value $a$, and $T'$  be the vanishing time of $Y_2$, if $T'=T$ we are done.

Assume now $T'<T$. We consider the solution $Y'_2$ which is driven by $\eta_2$ with initial condition  $Y'_2(T')=Y_1(T')$. Since $Y'_2(T')=Y_1(T')>0=Y_2(T')$, it is easy to see that $Y'_2(0)>Y_2(0)=a$, hence $Y'_2(0)\notin A$. On the other hand, since $Y'_2(T')=Y_1(T')$, we have $Y'_2(t)\leq Y_1(t),\forall t\geq T'$, and $Y'_2(t)$ will vanish at $T$ or before $T$, so $Y'_2(0)\in A$, contrarily to the assumption.
\end{proof}

\begin{remark}
The analogue of lemma \ref{COM} for the real Loewner equation (\ref{HRLE}) is also true, and the proof is the same.
\end{remark}

\begin{lemma}\label{TRAN}
Let $c$ be a nonnegative number, $\varepsilon,T>0$ and let $y_{\varepsilon}$ be the solution of following equation:
\begin{equation}\label{imspecial}
\dot{y}=\frac{2y}{y^2+ct}, \ \ y(0)=\varepsilon.
\end{equation}
Then we have
$$\lim_{\varepsilon\rightarrow0}y_{\varepsilon}(T)=
\begin{cases}
\sqrt{T(4-c)} & c<4 \\
0 & c\geq4.
\end{cases}$$

\end{lemma}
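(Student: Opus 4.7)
My plan is to reduce the ODE to a linear one by swapping the roles of independent and dependent variable. I would set $z = y^2$, which turns $\dot y = 2y/(y^2+ct)$ into
\[ \dot z \;=\; \frac{4z}{z+ct}. \]
Since $\dot y > 0$, the map $t \mapsto z$ is strictly increasing and hence invertible; for $t$ viewed as a function of $z$ the equation reads
\[ \frac{dt}{dz} \;-\; \frac{c}{4z}\, t \;=\; \frac{1}{4}, \]
a linear first-order ODE. Using the integrating factor $z^{-c/4}$ and integrating from $z=\varepsilon^2$ (where $t=0$) to $z = y_\varepsilon(T)^2$ (where $t=T$), I would obtain, for $c \neq 4$, the closed-form relation
\[ y_\varepsilon(T)^2 \;=\; (4-c)\,T \;+\; \varepsilon^{\,2-c/2}\, y_\varepsilon(T)^{c/2}, \]
and for $c=4$ the analogous identity $T = \tfrac{1}{2}\, y_\varepsilon(T)^2 \ln(y_\varepsilon(T)/\varepsilon)$.

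Next, I would record the uniform a priori bound $y_\varepsilon(T)^2 \le \varepsilon^2 + 4T$, which is immediate from $\dot y \le 2/y$ (drop the $ct$ in the denominator); in particular $y_\varepsilon(T)$ stays in a fixed compact set as $\varepsilon \to 0$. The three cases of the lemma then fall out of the closed form. For $c < 4$ the exponent $2-c/2$ is strictly positive, so $\varepsilon^{\,2-c/2} \to 0$; combined with the boundedness of $y_\varepsilon(T)^{c/2}$, this forces $y_\varepsilon(T)^2 \to (4-c)T$, i.e.\ $y_\varepsilon(T) \to \sqrt{T(4-c)}$. For $c > 4$ the same identity rewrites as $y_\varepsilon(T)^2 + (c-4)T = \varepsilon^{\,2-c/2}\, y_\varepsilon(T)^{c/2}$; if some subsequence satisfied $y_\varepsilon(T) \ge \delta > 0$, the right-hand side would diverge while the left remains bounded by $1 + cT$, a contradiction, so $y_\varepsilon(T) \to 0$. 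The borderline case $c=4$ is handled in the same spirit: a subsequence with $y_\varepsilon(T) \ge \delta$ would make $\ln(\delta/\varepsilon) \to \infty$, contradicting $T = \tfrac{1}{2}\, y_\varepsilon(T)^2 \ln(y_\varepsilon(T)/\varepsilon)$.

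I do not foresee any real obstacle. The whole argument rests on the observation that, although $z(t)$ satisfies a nonlinear ODE, its inverse $t(z)$ satisfies a linear one and can therefore be integrated in closed form. Everything after that is routine, the only slightly delicate point being the a priori bound on $y_\varepsilon(T)$, which is needed only in the case $c<4$ and follows from a one-line differential inequality.
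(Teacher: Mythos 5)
Your proof is essentially the same as the paper's: both exploit the observation that the ODE becomes linear when $t$ is viewed as a function of the dependent variable, integrate in closed form, and then pass to the limit $\varepsilon\to 0$ via the resulting algebraic identity. Your preliminary substitution $z=y^2$ is a cosmetic detour (the paper treats $t$ directly as a function of $y$), and your explicit a priori bound $y_\varepsilon(T)^2\le\varepsilon^2+4T$ is a welcome extra line of rigor that the paper leaves implicit, but the route and the closed-form relations are identical.
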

\begin{proof}
We observe that (\ref{imspecial}) becomes linear if we consider $y$ as the variable and $t$ as the function:

$$\frac{\mathrm{d}t}{\mathrm{d}y}=\frac{c}{2y}t+\frac{y}{2}$$
If $c\neq4$, then we have $\displaystyle t=\frac{1}{4-c}y^2-\frac{1}{4-c}\varepsilon^{2-\frac{c}{2}}y^{\frac{c}{2}}$. Putting $t=T$, we have:
$\displaystyle \varepsilon^{\frac{c}{2}-2}=\frac{y(T)^{\frac{c}{2}}}{(c-4)T+y(T)^2}$, and it is easy to check that $\displaystyle \lim_{\varepsilon\rightarrow0}y_{\varepsilon}(T)=0$ for $c>4$ . When $c<4$, $\displaystyle \lim_{\varepsilon\rightarrow0}y_{\varepsilon}(T)=\sqrt{T(4-c)}$ because the left-hand side of the equality goes to $+\infty$.

If $c=4$, the solution is $\displaystyle t=\frac{1}{2}y^2\ln y-\frac{1}{2}y^2\ln\varepsilon$. Let $t=T$ as before: $\displaystyle\ln\varepsilon=\ln y(T)-\frac{2T}{y(T)^2}$. Since the left side tend to $-\infty$, then $\displaystyle \lim_{\varepsilon\rightarrow0}y_{\varepsilon}(T)=0$. This finishes the proof.
\end{proof}
Combining these two lemmas, we can now prove theorem  \ref{transition}:
\begin{proof}[Proof of theorem \ref{transition}]
By lemma \ref{COM}, we only need to prove that the driving function $t\mapsto 2\sqrt{T-t}$ is not vanishing at time $T$ while $t\mapsto c\sqrt{T-t}$ is if $c<2.$ From lemma \ref{TRAN}, we know that if $c=2$, then $\varepsilon\rightarrow y_{\varepsilon}(T)$ is a self homeomorphism of $(0,+\infty)$. Conversely, when the driving function is $t\mapsto 2\sqrt{T-t}$, any initial value will lead to a solution which is not $0$ at time $T$. When $c<2$, we let the initial value equals $\sqrt{T(4-c^2)}$, then the solution $Y_2(t)$ will be smaller than all the solutions $t\mapsto y_{\varepsilon}(T-t)$, hence we have $Y_2(T)<y_{\varepsilon}(0)=\varepsilon$ for arbitrary positive $\varepsilon$, implying that $Y_2$ is vanishing at $T$, and the proof is finished.
\end{proof}

\subsection{Properties of the Imaginary Equation}
In this subsection, we discuss (\ref{CON1}), the imaginary Loewner equation after time transformation. We write it as follows:
\begin{equation}\label{ILET}
    \frac{\dot{y}(t)}{y(t)}=1-\frac{4}{\eta(t)^2+y(t)^2}
\end{equation}
Using the time change, it is easy to check that $Y(t)>0$ is a vanishing solution if and only if $\displaystyle\lim_{t\rightarrow+\infty}e^{-t}y(t)=0$. 
\begin{definition}
If there exists a solution $y$ of (\ref{CON1}) s.t. $\displaystyle\lim_{t\rightarrow+\infty}e^{-t}y(t)=0$, then we say that the driving function $\eta$, the equation (\ref{CON1}) and the solution are vanishing. Otherwise, we say that the equation, the solution and the driving function are not vanishing.
\end{definition}
The condition on $y$ that implies the vanishing property may be considerably relaxed:
\begin{lemma}\label{4.4}
The solution $y$ is  vanishing  if and only if $0<y(t)<2,\forall t>0$.
\end{lemma}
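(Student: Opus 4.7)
The plan is to use the logarithmic form of the equation to reduce vanishing to an integrability condition, and then exploit the fact that $y=2$ is the critical threshold at which the nonlinear drift changes sign.

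First I would rewrite (\ref{CON1}) in the form (\ref{ILET}), namely $\dot{y}/y = 1 - 4/(\eta^2+y^2)$, and introduce $f(t) = e^{-t}y(t)$. A direct calculation gives
$$\frac{\dot f(t)}{f(t)} = -\frac{4}{\eta(t)^2+y(t)^2},$$
so $f$ is strictly positive and strictly decreasing, and $y$ is vanishing (meaning $f(t)\to 0$) if and only if
$$\int_0^{+\infty}\frac{4\,ds}{\eta(s)^2+y(s)^2} = +\infty.$$
The easy implication is now immediate: if $0<y(t)<2$ for all $t>0$, then $f(t)=e^{-t}y(t)<2e^{-t}\to 0$, so $y$ is vanishing.

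For the converse I would argue by contrapositive. Suppose $y(t_0)\geq 2$ for some $t_0>0$; the goal is to show $f\not\to 0$. Writing the equation as $\dot y = y(\eta^2+y^2-4)/(\eta^2+y^2)$, one sees that $y\geq 2$ implies $\dot y\geq 0$, so I would first establish the persistence statement $y(t)\geq 2$ for all $t\geq t_0$ by the standard barrier argument: if $t_1 = \inf\{t\geq t_0 : y(t)<2\}$ were finite, continuity would force $y(t_1)=2$ and $\dot y(t_1)\geq 0$, contradicting the definition of $t_1$. Next, using $\eta^2+y^2\geq y^2$ and $y\geq 2$, I get the differential inequality
$$\dot y \;\geq\; y - \frac{4}{y} \;\geq\; y - 2.$$
Rewriting this as $(e^{-t}y)'\geq -2e^{-t}$ and integrating from $t_0$ to $t$ yields
$$e^{-t}y(t) \;\geq\; e^{-t_0}(y(t_0)-2) + 2e^{-t},$$
so $\liminf_{t\to\infty}f(t)\geq e^{-t_0}(y(t_0)-2)$. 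If $y(t_0)>2$ strictly, this is positive, contradicting vanishing.

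The main obstacle is the borderline case $y(t_0)=2$, where the above Grönwall-type bound is degenerate. Here I would note that at any $t$ with $y(t)=2$ one has $\dot y(t) = 2\eta(t)^2/(\eta(t)^2+4)$, so either $\eta(t_0)>0$ (whence by continuity $y$ is strictly increasing past $t_0$ and $y(t_1)>2$ for some $t_1>t_0$, reducing to the previous case) or $\eta(t_0)=0$; iterating, the only way to stay on the barrier $y\equiv 2$ is to have $\eta\equiv 0$ on $[t_0,\infty)$, which is excluded by the standing hypothesis that the driving function is not identically zero (as noted at the start of Section 3.2). This is the only delicate point; everything else is a direct ODE comparison.
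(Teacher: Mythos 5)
Your proof is correct and follows the same basic strategy as the paper---establish persistence of $y\geq 2$ past $t_0$ and then extract from a differential inequality a positive lower bound on $e^{-t}y(t)$---but improves on it in two concrete ways. The paper integrates the nonlinear inequality $\dot y > y - 4/y$ explicitly to get $y(s)>\sqrt{c}\,e^{s}$; you instead linearize further to $\dot y\geq y-2$, rewrite it as $(e^{-t}y)'\geq -2e^{-t}$, and integrate immediately, a cleaner computation that reaches the same conclusion $\liminf e^{-t}y(t)\geq e^{-t_0}(y(t_0)-2)>0$. More importantly, the paper asserts without qualification that $y(t_0)\geq 2$ forces $\dot y(t_0)>0$, which fails precisely in the borderline case $y(t_0)=2$, $\eta(t_0)=0$, where $\dot y(t_0)=0$; you correctly isolate that case and argue that staying on the barrier $y\equiv 2$ forces $\eta\equiv 0$ to the right of $t_0$, which closes a genuine gap in the paper's argument. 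The one caveat is that the paper's stated nondegeneracy hypothesis literally excludes only $\eta\equiv 0$ on all of $[0,\infty)$, not on a terminal half-line $[t_0,\infty)$; for the lemma to be true as stated one in fact needs the stronger reading (otherwise $y\equiv 2$ on $[t_0,\infty)$ is a vanishing solution with $y\not<2$), so it is worth flagging that both your argument and the lemma itself are implicitly using the stronger nondegeneracy.
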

\begin{proof}
If $y(t)<2$, then it is obvious that $\displaystyle\lim_{t\rightarrow+\infty}e^{-t}y(t)=0$. Conversely, assume that there exists $t_0$ s.t. $y(t_0)\geq 2$. From the equation (\ref{ILET}), $\dot{y}(t_0)>0$. Hence after a small time, $y(t)$ is larger than $2$. If $y(T)>2$, we have 
$$\frac{\mathrm{d}(y(s))}{\mathrm{d}s}>y(s)-\frac{4}{y(s)}, \forall s>T$$
Solving this differential inequation we get
$$y(s)>\sqrt{ce^{2t}+4}>\sqrt{c}e^t,$$
which means that $y$ is not a vanishing solution. 
\end{proof}

Even though the $e^{-t}$ condition is much weaker, we still use this condition as the definition in order to fit with (\ref{CON2}). 

Using the form of the equation (\ref{ILET}) we can provide an alternative proof of theorem \ref{transition}.
\begin{proof}
If $\eta(t)\geq 2,t\in[0,+\infty)$, the right side of (\ref{ILET}) is positive, which  means that $y$ is a increasing function. For all positive initial value $\varepsilon$, we have 
$$\frac{\mathrm{d}(\ln y(t))}{\mathrm{d}t}\geq \frac{\varepsilon}{4+\varepsilon}=C_{\varepsilon}$$
so that
$$y(\frac{1}{c_{\varepsilon}}\ln\frac{2}{\varepsilon})\geq y(0)\mathrm{exp}(\frac{1}{C_{\varepsilon}}\ln\frac{2}{\varepsilon}\cdot C_{\varepsilon})=2.$$
Hence there will be no vanishing solution, and this gives the proof of the first part.

If $\eta(t)<c<2, t\in[0,+\infty)$, we choose $y(0)=2-c$. Then $y$ will decrease on $[0,+\infty)$, so $y(t)<2$ holds. Applying lemma \ref{COM} and \ref{4.4} finishes the second part.
\end{proof}

These results maybe improved in several ways. We develop here one of them, that leads to the main idea of this paper. We define the "lower bound function" as
\begin{equation}\label{LB}
    L(t)=\int_0^t (1-\frac{4}{\eta(s)^2})\mathrm{d}s.
\end{equation}
This function plays a important role in the rest of this paper, because for any solution $y$ of equation (\ref{ILET}) and time interval $(a,b)$, we have 
$$\int_a^b \frac{\dot{y}(t)}{y(t)}\mathrm{d}t=\int_a^b( 1-\frac{4}{\eta(t)^2+y(t)^2})\mathrm{d}t>L(b)-L(a).$$
The next lemma gives a necessary condition on the lower bound function to make a driving function $\eta$  vanish.
\begin{lemma}
If $\eta$ is a vanishing driving function, then $\displaystyle\lim_{t\rightarrow+\infty}L(t)=-\infty$.
\end{lemma}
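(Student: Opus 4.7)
I would begin by decomposing the right-hand side of (\ref{ILET}) via the algebraic identity
$$\frac{\dot y}{y} \;=\; \Bigl(1-\frac{4}{\eta^2}\Bigr) \;+\; \frac{4\,y^2}{\eta^2(\eta^2+y^2)},$$
which isolates the $\eta$-only part defining $L$ from a strictly positive remainder. Integrating from $0$ to $t$ yields the clean identity
$$\ln y(t) - \ln y(0) \;=\; L(t) \;+\; I(t), \qquad I(t) \,:=\, \int_0^t \frac{4\,y(s)^2}{\eta(s)^2(\eta(s)^2+y(s)^2)}\,ds.$$
Since $y$ is a vanishing solution, Lemma \ref{4.4} forces $y(t)<2$, so $\ln y(t)<\ln 2$, and the identity rearranges to
$$L(t) \;<\; \ln 2 - \ln y(0) \;-\; I(t).$$
The conclusion $L(t)\to-\infty$ thus reduces to proving the divergence $I(t)\to+\infty$.

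To prove divergence I would argue by contradiction: suppose $I(t)\le C<+\infty$. Then the identity says $L(t)$ and $\ln y(t)$ differ by a bounded quantity, so if $L\not\to-\infty$ then $\ln y\not\to-\infty$, i.e.\ $\liminf_{t\to\infty}y(t)\ge c_0>0$. Hence $y$ is trapped in $[c_0,2)$ from some time $T$ onward, and I can apply Cauchy--Schwarz with the factorization
$$\frac{2}{\eta^2+y^2} \;=\; \frac{2}{\eta\sqrt{\eta^2+y^2}}\cdot\frac{\eta}{\sqrt{\eta^2+y^2}},$$
to obtain
$$\Bigl(\int_T^t \frac{2}{\eta^2+y^2}\,ds\Bigr)^{\!2} \;\le\; \int_T^t \frac{4}{\eta^2(\eta^2+y^2)}\,ds \;\cdot\; \int_T^t \frac{\eta^2}{\eta^2+y^2}\,ds.$$
The second right-hand factor is at most $t-T$; the first, via $y\ge c_0$, is at most $(4/c_0^2)\,I(t)\le 4C/c_0^2$, hence bounded. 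Meanwhile the vanishing property combined with $y<2$ gives $\int_T^t 4/(\eta^2+y^2)\,ds = (t-T) - (\ln y(t)-\ln y(T)) = (t-T)+O(1)$, so the left side grows like $(t-T)^2/4$. For $t$ large this contradicts the $O(t)$ bound on the right, so $I(t)\to+\infty$ and $L(t)\to-\infty$.

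The main obstacle is the intermediate regime $\liminf_t y(t)=0<\limsup_t y(t)$, where $y$ oscillates with dips to zero and the uniform bound $y\ge c_0$ on all of $[T,t]$ is unavailable. I would handle this by exploiting the identity on each oscillation cycle directly: between two consecutive returns of $y$ to a fixed small level, $\ln y$ nets to zero while $I$ strictly increases (being the integral of a positive quantity over a nontrivial set where $y$ is away from $0$), so $L$ drops by a strictly positive amount $I_{\mathrm{osc}}>0$. If infinitely many such oscillations occur, $L\to-\infty$ by accumulation; if only finitely many occur, then from some time on $y$ is either eventually near $0$ (so $\ln y\to-\infty$ and we are done through the identity) or eventually bounded away from $0$, reducing to the Cauchy--Schwarz argument above.
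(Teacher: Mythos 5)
You start from the same algebraic identity the paper relies on, namely $\ln(y(t)/y(0)) = L(t) + I(t)$ with $I$ the integral of the positive remainder $\frac{4y^2}{\eta^2(\eta^2+y^2)}$, and combine it with Lemma~\ref{4.4} ($y<2$) to reduce the statement to showing $I(t)\to+\infty$. Your Cauchy--Schwarz argument for the regime $\liminf y>0$ is correct and genuinely different from the paper's second step: the paper instead splits time into $\{\eta<3\}$ and $\{\eta\geq 3\}$ and compares the resulting linear lower bound for $\ln y$ against the bound $y<2$; your factorization $\frac{2}{\eta^2+y^2}=\frac{2}{\eta\sqrt{\eta^2+y^2}}\cdot\frac{\eta}{\sqrt{\eta^2+y^2}}$ avoids the measure split entirely and is cleaner. (The constant should be $C/c_0^2$ rather than $4C/c_0^2$, but that is immaterial.) Also note that the intermediate deduction ``$\ln y\not\to-\infty$, i.e.\ $\liminf y\ge c_0>0$'' is false as written: boundedness of $I$ together with $L\not\to-\infty$ only yields $\limsup y>0$, not a positive $\liminf$. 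You implicitly correct this by treating the oscillation case separately, but the misstatement should be fixed.

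The genuine gap is in the oscillation case. You assert that between two returns of $y$ to a fixed level, $I$ increases by some $I_{\mathrm{osc}}>0$, and conclude $L\to-\infty$ ``by accumulation''. Positivity is not enough: the per-cycle increments could be summable, in which case $I$ stays bounded. You need a uniform lower bound, and this is precisely the quantitative estimate that the paper's first step (convergence of $L$) supplies. The missing ingredient is this: fix $\liminf y<\delta_1<\delta_2<\limsup y$ and consider a descent of $y$ from $\delta_2$ to its first return to $\delta_1$. On the set $A$ where $\eta^2+y^2<4$, one has $\eta<2$ and $y\ge\delta_1$, hence $y^2/\eta^2\ge\delta_1^2/4$; moreover $\int_A\bigl(\frac{4}{\eta^2+y^2}-1\bigr)\ge\ln(\delta_2/\delta_1)$ because the complementary set contributes nonnegatively to the net change $\ln\delta_1-\ln\delta_2$. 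Combining these gives $\int_A\frac{4y^2}{\eta^2(\eta^2+y^2)}\ge\frac{\delta_1^2}{4}\ln(\delta_2/\delta_1)$, a constant, and since infinitely many disjoint such descents occur, $I\to\infty$. Without an estimate of this type your oscillation step does not close. The paper carries out an analogous estimate by tracking oscillations of $L$ (rather than of $y$) and using the bound $y\ge y(0)e^{L}$, but the mechanism is the same, so once you add the uniform lower bound your proof and the paper's become parallel in that step.
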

\begin{proof}
We first prove that $\displaystyle\lim_{t\rightarrow+\infty}L(t)$ exists. If $L(t)$ does not converge, then we can find two constants $C_1>C_2$ s.t. $L(t)$ takes these two value infinitely many times as $t\to+\infty$. Hence there exists an increasing sequence $\{a_n\}$ such that $L(a_n)=C_1$ and such that between time $a_n$ and $a_{n+1}$, there is a time $b_n$ s.t. $L(b_n)=C_2$. And without losing generality, we may assume that $b_n$ is the first time when $L(t)$ equal $C_2$ after $a_n$.

From the definition of $L(t)$, we see that
$$\int_{a_n}^{b_n}(1-\frac{4}{\eta(t)^2})\mathrm{d}t=L(b_n)-L(a_n)=C_2-C_1$$
Notice that $\displaystyle 1-\frac{4}{\eta(t)^2}>0$ when $\eta(t)>2$, so we can split this integral in two parts and drop the second one
\begin{align*}
C_2-C_1&=\left(\int_{(a_n,b_n)\cap\{\eta\leq 2\}}+\int_{(a_n,b_n)\cap\{\eta>2\}}\right)\left(1-\frac{4}{\eta(t)^2}\right))\mathrm{d}t \\
&\geq\int_{(a_n,b_n)\cap\{\eta\leq2\}}(1-\frac{4}{\eta(t)^2})\mathrm{d}t>-\int_{(a_n,b_n)\cap\{\eta\leq2\}}\frac{4}{\eta(t)^2}\mathrm{d}t
\end{align*}
Integrating equation (\ref{CON1}) from $a_n$ to $b_n$, we get 
\begin{align*}
\ln\left(\frac{y(b_n)}{y(a_n)}\right)&=\int_{a_n}^{b_n}\mathrm{d}(\ln(y(t)))=\int_{a_n}^{b_n}(1-\frac{4}{\eta(t)^2+y(t)^2})\mathrm{d}t \\
&=\int_{a_n}^{b_n}\left[1-\frac{4}{\eta(t)^2}+\frac{4}{\eta(t)^2}(1-\frac{1}{1+\frac{y(t)^2}{\eta(t)^2}})\right]\mathrm{d}t
\end{align*}
Assuming $y(0)=\varepsilon$, since  $L$ is the lower bound function, we see that for $t\in[a_n,b_n]$, we have
$$\ln\left(\frac{y(t)}{y(0)}\right)=\int_0^t\left(1-\frac{4}{\eta(t)^2+y(t)^2}\right) \mathrm{d}t\geq L(t)\geq L(b_n)=C_2,$$
which gives for $y(t)$ the lower bound $\varepsilon e^{C_2}=C_3$. We can then  estimate the variation of $y(t)$ in $[a_n,b_n]$ as follows:
\begin{align*}
\ln\left(\frac{y(b_n)}{y(a_n)}\right)&>C_2-C_1+\int_{(a_n,b_n)\cap\{\eta\leq2\}}\frac{4}{\eta(t)^2}\left(1-\frac{1}{1+\frac{y(t)^2}{\eta(t)^2}}\right)\mathrm{d}t \\
&\geq C_2-C_1+\int_{(b_n,a_n)\cap\{\eta\leq2\}}\frac{4}{\eta(t)^2}\left(1-\frac{1}{1+\frac{C_3^2}{2^2}}\right)\mathrm{d}t\\
&\geq C_2-C_1+C_4(C_1-C_2)
\end{align*}
where $0<\displaystyle C_4=1-\frac{1}{1+\frac{C^2_3}{4}}<1$ depends only on $C_1,C_2$ and $\varepsilon$. But
$$\ln\left(\frac{y(a_{n+1})}{y(a_n)}\right)=\ln\left(\frac{y(a_{n+1})}{y(b_n)}\right)+\ln\left(\frac{y(b_n)}{y(a_n)}\right)$$
the first term being greater than $L(a_{n+1})-L(b_n)=C_1-C_2$, we get that
$$y(a_{n+1})>y(a_n)e^{C_4(C_1-C_2)}=C_5 y(a_n).$$
Since $C_5>1$, for arbitrary $\varepsilon$, when $n$ is sufficiently large, $y(a_n)>2$. That means that $\eta(t)$ is not a vanishing driving function. This contradiction leads to the conclusion that the limit of $L(t)$ must exist.

We now prove that $\displaystyle\lim_{t\rightarrow+\infty}L(t)=-\infty$. Otherwise this limit is a constant $C_6$, and $\exists C_7$ s.t. $L(t)>C_7, \forall t>0$. Just like before, assuming $y(0)=\varepsilon$, we see that $y(t)>y(0)e^{L(t)}>\varepsilon e^{C_7}=C_8.$ Recalling that
$$\ln\left(\frac{y(t)}{y(0)}\right)=L(t)+\int_0^t\frac{4}{\eta(s)^2}\left(1-\frac{1}{1+\frac{y(s)^2}{\eta(s)^2}}\right)\mathrm{d}s,$$
we may write 
\begin{align*}
\ln\left(\frac{y(t)}{y(0)}\right)&\geq L(t)+\int_{(0,t)\cap\{\eta<3\}}\frac{4}{\eta(s)^2}\left(1-\frac{1}{1+\frac{y(s)^2}{\eta(s)^2}}\right)\mathrm{d}s\\
&\geq L(t)+\int_{(0,t)\cap\{\eta<3\}}\frac{4}{3^2}\left(1-\frac{1}{1+\frac{C_8^2}{3^2}}\right)\mathrm{d}s\\
&=L(t)+C_9\vert (0,t)\cap\{\eta<3\}\vert
\end{align*}
where $\vert A\vert$ denotes the Lebesgue measure of the Borel set $A$. If $\vert(0,+\infty)\cap\{\eta<3\}\vert=+\infty$, then $y(t)>2$ for $t$ large, which leads to a contradiction. Hence we may assume that $\vert(0,+\infty)\cap\{\eta<3\}\vert=C_{10}<+\infty$ and consequently that $\vert(0,+\infty)\cap\{\eta\geq3\}\vert=\infty$. But from the definition of $L(t)$
$$L(t)=t-\left(\int_{(0,t)\cap\{\eta<3\}}+\int_{(0,t)\cap\{\eta\geq3\}}\right)\frac{4}{\eta(s)^2}\mathrm{d}s$$
$$\int_{(0,t)\cap\{\eta<3\}}\frac{4}{\eta(s)^2}\mathrm{d}s>t-L(t)-\frac{4}{9}l((0,+t)\cap\{\eta\geq3\})>\frac{5}{9}t-L(t).$$
Going back to the inequality before, we have
$$\int_{(0,t)\cap\{\eta<3\}}\frac{4}{\eta(s)^2}\left(1-\frac{1}{1+\frac{y(s)^2}{\eta(s)^2}}\right)\mathrm{d}s>\left(1-\frac{1}{1+\frac{C^2_8}{9}}\right)\int_{(0,t)\cap\{\eta<3\}}\frac{4}{\eta(s)^2}\mathrm{d}s,$$
from which it follows that $y(t)\to +\infty$ since
$$\ln\left(\frac{y(t)}{y(0)}\right)\geq L(t)+C_{11}\frac{5}{9}t-L(t)=C_{12}t.$$
It follows that $y$ is not a vanishing solution, contradicting the assumption: the lemma is proven.
\end{proof}

Although the lower bound function of a vanishing driving function tends to $-\infty$, there still exist vanishing solutions that do not tends to $0$, a simple example being $\eta(t)=y(t)=\sqrt{2}$. Hence we would like to investigate the behaviour of vanishing solutions when time goes to $+\infty$.
\begin{lemma}\label{LIM2}
There is at most one vanishing solution satisfying $\displaystyle\lim_{t\rightarrow+\infty}y(t)\neq0$.
\end{lemma}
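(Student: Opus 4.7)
The plan is to argue by contradiction. Suppose $y_1, y_2$ are two distinct vanishing solutions of (\ref{CON1}) with positive limits $L_1 = \lim_{t\to\infty}y_1(t)$ and $L_2 = \lim_{t\to\infty}y_2(t)$. By Cauchy--Lipschitz uniqueness for (\ref{CON1}) the two trajectories cannot cross, so without loss of generality $y_1(t) < y_2(t)$ for every $t \geq 0$, and consequently $0 < L_1 \leq L_2 \leq 2$, the upper bound coming from Lemma \ref{4.4}.

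The case $L_1 = L_2$ is immediate. Setting $w(t) = \ln(y_2(t)/y_1(t)) > 0$, equation (\ref{ILET}) gives
\[
\dot w(t) \;=\; \frac{4\bigl(y_2(t)^2 - y_1(t)^2\bigr)}{(\eta(t)^2 + y_1(t)^2)\,(\eta(t)^2 + y_2(t)^2)} \;>\; 0,
\]
so $w$ is strictly increasing and $\lim_{t\to\infty} w(t) \geq w(0) > 0$; this rules out $L_1 = L_2$. From now on $0 < L_1 < L_2$.

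The key observation is that finiteness of $\int_0^\infty \dot w\,dt = \ln(L_2/L_1) - \ln(y_2(0)/y_1(0))$ forces $\eta$ to be large off a set of finite Lebesgue measure. Pick $T_0$ with $y_i(t) \geq L_i/2$ and $y_2(t)^2 - y_1(t)^2 \geq c_0 > 0$ for every $t \geq T_0$; together with the trivial bound $(\eta^2 + y_1^2)(\eta^2 + y_2^2) \leq (\eta^2 + 4)^2$ this yields
\[
\int_{T_0}^{\infty} \frac{dt}{(\eta(t)^2 + 4)^2} \;\leq\; \frac{1}{4 c_0}\int_{T_0}^{\infty} \dot w(t)\,dt \;<\; \infty.
\]
A Chebyshev estimate then shows that for every $M > 0$ the set $A_M := \{t \geq 0 : \eta(t)^2 \leq M\}$ has finite Lebesgue measure, since $|A_M \cap [T_0,\infty)| \leq (M+4)^2 \int_{T_0}^\infty (\eta^2+4)^{-2}\,dt$.

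To conclude, integrate (\ref{ILET}) along $y_1$: the quantity $\int_{T_0}^{T}\bigl(1 - 4/(\eta^2 + y_1^2)\bigr)dt$ equals $\ln y_1(T) - \ln y_1(T_0)$ and converges to the finite value $\ln L_1 - \ln y_1(T_0)$ as $T \to \infty$. Choose $M = 8$: on $A_8^c$ the integrand is at least $1/2$, while on $A_8 \cap [T_0,\infty)$ it is bounded in modulus by $16/L_1^2 + 1$, using $\eta^2 + y_1^2 \geq L_1^2/4$. The contribution from $A_8^c \cap [T_0, T]$ is therefore at least $\tfrac{1}{2}(T - T_0 - |A_8 \cap [T_0,\infty)|)$ and diverges, while the contribution from $A_8 \cap [T_0, T]$ stays bounded; this contradicts the finite limit. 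The only delicate step is the Chebyshev bound on $|A_M|$ and its subsequent use to separate the two regimes of the integrand $1 - 4/(\eta^2+y_1^2)$.
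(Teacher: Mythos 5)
Your argument is correct as far as it goes, but it proves a strictly weaker statement than the one the paper needs and uses. You assume from the outset that both $y_1$ and $y_2$ converge to positive limits $L_1$ and $L_2$. The paper's remark immediately after the lemma makes the intended reading explicit: ``this lemma shows that all the vanishing solutions of (\ref{CON1}) converge to $0$ with at most one possible exception.'' That is, for any two vanishing solutions $y_1<y_2$, the smaller one must tend to $0$; one must also rule out the possibility that $\lim_{t\to\infty}y_1(t)$ fails to exist. Your proof does not address this oscillating case, and nothing in your argument supplies a lower bound $y_1(t)\geq L_1/2$ or a lower bound on $y_2^2-y_1^2$ when $\liminf y_1=0<\limsup y_1$; both inequalities are essential to your Chebyshev step. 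The paper handles precisely this missing case separately (its ``case 1''), by extracting level-crossing times $a_n<b_n$ for the smaller solution, producing a uniform lower bound on $|(a_n,b_n)\cap\{\eta<2\}|$, and then showing that $\ln\bigl(u(b_n)/u(a_1)\bigr)$ (with $u$ the difference of solutions) grows without bound, contradicting the larger solution being vanishing. Some version of that argument is needed to close your gap.

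Where both limits do exist and are positive, your route is genuinely different and rather clean. You work with the logarithmic ratio $w=\ln(y_2/y_1)$, which is monotone, rather than with the difference $u=y_1-y_2$ as in the paper; you then exploit finiteness of $\int_0^\infty\dot w\,dt$ to show, via a Chebyshev estimate, that $|\{t\geq T_0:\eta(t)^2\leq M\}|<\infty$ for every $M$, and finally contradict the finiteness of $\int_{T_0}^T\bigl(1-4/(\eta^2+y_1^2)\bigr)\,dt$ by splitting according to $\{\eta^2\leq 8\}$ and its complement. That splitting is a nice touch. (It is worth noting that the paper's corresponding case establishes the \emph{opposite} measure estimate, $|(0,\infty)\cap\{\eta<2\}|=\infty$, en route to its contradiction; both are legitimate since the assumption being refuted is false.) If you supplement your proof with an argument covering the case in which the smaller solution has no limit, the result will be complete; as written, it is not.
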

\begin{proof}
Assume that $y_1>y_2$ are two vanishing solutions. Substracting the two equations (\ref{CON1}) we have
\begin{equation*}
\frac{\mathrm{d}(y_1(t)-y_2(t))}{\mathrm{d}t}=(y_1(t)-y_2(t))-\frac{4(y_1(t)-y_2(t))(\eta(t)^2-y_1(t)y_2(t))}{(\eta(t)^2+y_1(t)^2)(\eta(t)^2+y_2(t)^2)},
\end{equation*}
putting $u(t)=y_1(t)-y_2(t)$, we get 
\begin{equation}\label{LIM}
\frac{\dot{u}(t)}{u(t)}=1-\frac{1}{1+\frac{y_1(t)^2}{\eta(t)^2}}\frac{4}{\eta(t)^2+y_2(t)^2}+\frac{4y_1(t)y_2(t)}{(\eta(t)^2+y_1(t)^2)(\eta(t)^2+y_2(t)^2)}.
\end{equation}
 There are three cases: 1) $\displaystyle\lim_{t\rightarrow+\infty}y_2(t)$ does not exist, 2)  $\displaystyle\lim_{t\rightarrow+\infty}y_2(t)=c>0$, or 3)  $\displaystyle\lim_{t\rightarrow+\infty}y_2(t)= 0$. 

In the first case, we find two constant $C_1>C_2$, and two sequences $(a_n)$ and $(b_n)$ converging to $+\infty$ s.t. $y_2(a_n)=C_1, y_2(b_n)=C_2,a_n<b_n<a_{n+1}<b_{n+1},$ and $b_n$ is the first time that $y_2$ is equal to $C_2$ after $a_n$, $\forall n>0$. Integrating the last term of (\ref{LIM}) from $a_n$ to $b_n$, we have 
\begin{align*}
    \int_{a_n}^{b_n}\frac{4y_1(t)y_2(t)\mathrm{d}t}{(\eta^2+y_1(t)^2)(\eta^2+y_2(t)^2)}&\geq\int_{(a_n,b_n)\cap\{\eta<2\}}\frac{4C_2^2\mathrm{d}t}{(2^2+2^2)(2^2+C_2^2)} \\
    &=C_3|(a_n,b_n)\cap\{\eta<2\}|
\end{align*}
From equation (\ref{CON1}) for $y_2$
\begin{align*}
\ln\frac{C_2}{C_1}&=\int_{a_n}^{b_n}\left(1-\frac{4}{\eta(t)^2+y(t)^2}\right)\mathrm{d}t>\int_{(a_n,b_n)\cap\{\eta<2\}}\left(1-\frac{4}{\eta(t)^2+y(t)^2}\right)\mathrm{d}t\\
&>\int_{(a_n,b_n)\cap\{\eta<2\}}\left(1-\frac{4}{C_2^2}\right)\mathrm{d}t=|(a_n,b_n)\cap\{\eta<2\}|\left(1-\frac{4}{C^2_2}\right)
\end{align*}
Since $1-4/C_2^2$ is negative, we have 
$$|(a_n,b_n)\cap\{\eta<2\}|>\frac{C^2_2}{C^2_2-4}\ln \frac{C_2}{C_1}=C_4>0.$$
Going back to (\ref{LIM}), integrating from $a_1$ to $b_n$,
\begin{align*}
\ln\frac{u(b_n)}{u(a_1)}&>\int_{a_1}^{b_n}1-\frac{4}{\eta(t)^2+y_2(t)^2}\mathrm{d}t+\sum_{i=1}^n\int_{a_i}^{b_i}\frac{4y_1(t)y_2(t)\mathrm{d}t}{(2^2+y_1(t)^2)(2^2+y_2(t)^2)}\\
&>\ln\frac{y_2(b_n)}{y_2(a_1)}+nC_3\frac{C^2_2}{C^2_2-4}\ln\frac{C_2}{C_1}=\ln\frac{C_2}{C_1}+nC_3C_4
\end{align*}
It follows that $\lim_{n\rightarrow+\infty}y_1(b_n)=+\infty$, and we have a contradiction since $y_1$ is a vanishing solution. 

If now $\displaystyle \lim_{t\to+\infty}y_2(t)=c>0$, we use the same method, and first prove  that $|(0,+\infty)\cap\{\eta<2\}|=+\infty$ as above. Integrating (\ref{CON1}) with $y_2$,
\begin{align*}
    \ln\frac{y_2(t)}{y_2(0)}&=t-\int_0^t\frac{4\mathrm{d}s}{\eta(s)^2+y_2(s)^2}\\
    &>t-\int_{(0,t)\cap\{\eta\geq 2\}}\frac{4\mathrm{d}s}{4+y_2(s)^2}-\int_{(0,t)\cap\{\eta<2\}}\frac{4\mathrm{d}s}{y_2(s)^2}
\end{align*}
Let $t$ tend to infinity: if $|(0,+\infty)\cap\{\eta<2\}|<+\infty$, then the left side of the inequality is bounded while the right one goes to infinity as $\frac{c^2}{4+c^2}t$, thus leading to a contradiction. Now take $A$ large enough so that $y(t)\geq c/2,\,t\geq A$. Then, as above,
$$\int_{A}^{t}\frac{4y_1(s)y_2(s)\mathrm{d}s}{(\eta^2(s)+y_1(s)^2)(\eta^2(s)+y_2(s)^2)}\geq \frac{c^2}{64}\vert [A,t]\cap\{\eta<2\}\vert,$$
and 
$$\ln{(\frac{u(t)}{u(A)})}\geq \ln{(\frac{y_2(t)}{y_2(A)})}+\frac{c^2}{64}\vert [A,t]\cap\{\eta<2\}\vert,$$
which is impossible. The only left possibility is that $y_2(t)\to 0$ as $t\to +\infty$, and the claim is proven.
\end{proof}

There are some driving functions whose captured solutions all tend to $0$. An example is when $\eta$ is less than $2$ but converge to $2$. Actually, this lemma shows that all the vanishing solutions of (\ref{CON1}) converge to $0$ with at most one possible exception, which is then the greatest vanishing solution.

\subsection{Basic property of the dual equation (\ref{CON2})}
As we mentioned before, the vanishing property of this equation is similar to (\ref{CON1}).
\begin{definition}
We say the driving function $\eta(t)$ and equation (\ref{CON2}) are vanishing if there exists a solution $w(t)$ s.t. $\lim_{t\rightarrow+\infty}e^{-t}w(t)=0$, and $w$ vanishing solution.
\end{definition}
The transition for equation (\ref{CON2}) is the same as for (\ref{CON1}). We omit the proof since all the details are as same as the second proof of theorem \ref{transition}.
\begin{lemma}
If $\eta(t)\geq 2,\forall t\geq0$, then the equation (\ref{CON2}) is not vanishing. If $\eta(t)<C<2,\forall t\geq 0$, then the equation (\ref{CON2}) is vanishing.
\end{lemma}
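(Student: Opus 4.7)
The plan is to follow the authors' hint and imitate the second proof of Theorem~\ref{transition}, now for equation~(\ref{CON2}) written in the form $\dot w/w = 1 - 4/(\eta^2+\eta w)$. The structural analogy with~(\ref{ILET}) is clean: whether $\dot w/w$ is positive or negative is controlled by whether $\eta^2+\eta w$ exceeds $4$, just as in~(\ref{ILET}) the critical comparison was $\eta^2+y^2$ versus $4$. Hence the phase transition mechanism is the same, and the threshold value of $\eta$ is again $2$.

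For the non-vanishing direction, assume $\eta\geq 2$ and take any initial value $w(0)=w_0>0$. Since $\eta^2+\eta w\geq 4+2w_0>4$, the solution $w$ is strictly increasing, and the estimate $\dot w/w\geq w_0/(2+w_0)=:\alpha>0$ yields $w(t)\geq w_0 e^{\alpha t}$. Setting $\phi(t)=e^{-t}w(t)$, a direct computation gives $\dot\phi/\phi=-4/(\eta^2+\eta w)$. The exponential lower bound on $w$ together with $\eta\geq 2$ yield $4/(\eta^2+\eta w)\leq 2/(w_0 e^{\alpha t})$, which is integrable on $[0,\infty)$. Hence $\log\phi$ stays bounded below, $\phi$ converges to a strictly positive limit, and $w$ is not vanishing.

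For the vanishing direction, assume $\eta<C<2$ and pick $w_0\in(0,(4-C^2)/C)$. Then $\eta^2+\eta w_0<C^2+Cw_0<4$, so $\dot w(0)<0$. The step that requires care, and the one I view as the mild technical obstacle, is the claim that $w(t)<w_0$ persists for all $t>0$: if $T$ were the first time $w$ returns to $w_0$, then on $(0,T)$ one would have $w<w_0$, hence $\eta^2+\eta w<4$, hence $\dot w<0$, forcing $w$ to be strictly decreasing on $[0,T]$ and therefore $w(T)<w(0)=w_0$, a contradiction. Consequently $\eta^2+\eta w<4$ throughout, so $\dot\phi/\phi\leq -4/(C^2+Cw_0)<-1$, and $\phi(t)\to 0$ exponentially fast, i.e.\ $w$ is a vanishing solution. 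No analog of Lemma~\ref{COM} is needed, since the uniform bound $C<2$ already produces an explicit vanishing solution directly.
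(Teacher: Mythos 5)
Your proof is correct and does what the paper's remark asks for, namely to transpose the second proof of Theorem~\ref{transition} to equation~(\ref{CON2}). Both barrier arguments (the lower barrier $w\ge w_0$ when $\eta\ge 2$, the upper barrier $w\le w_0$ when $\eta<C<2$) are set up correctly, and the arithmetic $\eta^2+\eta w\ge 4+2w$ and $C^2+Cw_0<4$ is exactly the right replacement for $\eta^2+y^2$ in~(\ref{ILET}).

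One point worth making explicit, since it is the place where the transposition is not quite verbatim: the second proof of Theorem~\ref{transition} closes the non-vanishing direction by driving $y$ up to $2$ and then invoking Lemma~\ref{4.4}, which characterizes vanishing solutions of~(\ref{CON1}) as the bounded ones. That characterization fails for~(\ref{CON2}) (the paper itself remarks, after Lemma~\ref{LIM3}, that~(\ref{CON2}) can have unbounded vanishing solutions), so one cannot simply appeal to an analogue of Lemma~\ref{4.4}. Your estimate $\dot\phi/\phi=-4/(\eta^2+\eta w)$ together with the exponential lower bound $w\ge w_0e^{\alpha t}$ gives $\int_0^\infty 4/(\eta^2+\eta w)\,\mathrm{d}s<\infty$ directly, and therefore a positive limit for $e^{-t}w(t)$; this is the clean way to finish and is exactly what makes the argument self-contained. (One could also observe that $\eta\ge2$ forces $4w/(\eta^2+\eta w)\le2$, hence $\dot w\ge w-2$, and conclude from $w(t)-2\ge(w(t_0)-2)e^{t-t_0}$ once $w(t_0)>2$; either route works.) For the vanishing direction your observation that boundedness of $w$ alone already gives $e^{-t}w\to0$ is correct, and the remark that no analogue of Lemma~\ref{COM} is needed is accurate.
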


The second property is similar to lemma \ref{LIM2}, which states that most of the solutions converge to $0$. But the statement is different since (\ref{CON2}) may have unbounded vanishing solution, and the proof also needs to be modified.

\begin{lemma}\label{LIM3}
Equation (\ref{CON2}) has at most one vanishing solution such that $\displaystyle\varlimsup_{t\rightarrow+\infty}w(t)>0$.
\end{lemma}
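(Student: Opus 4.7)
The plan is to follow the proof of Lemma \ref{LIM2}, exploiting that the identity $\eta^2+\eta w=\eta(\eta+w)$ gives (\ref{CON2}) an algebraic structure closely parallel to (\ref{CON1}). Assume for contradiction that $w_1>w_2>0$ are two distinct vanishing solutions of (\ref{CON2}), both with $\varlimsup w_i>0$, and set $u=w_1-w_2>0$. A direct subtraction of the ODEs gives
\[
\frac{\dot u}{u}=1-\frac{4}{(\eta+w_1)(\eta+w_2)},\qquad \frac{d}{dt}\ln\frac{w_1}{w_2}=\frac{4u}{\eta(\eta+w_1)(\eta+w_2)}>0,
\]
so the ratio $R(t)=w_1(t)/w_2(t)$ is strictly increasing, with some limit $R_\infty\in(1,+\infty]$.

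I would then split into two cases along the lines of Lemma \ref{LIM2} (the third case $\lim w_2=0$ being excluded by $\varlimsup w_2>0$). In the oscillation case $\varliminf w_2<\varlimsup w_2$, pick $0<C_2<C_1<\varlimsup w_2$ and sequences $a_n<b_n$ on which $w_2$ drops monotonically from $C_1$ to $C_2$. Let $\eta_\ast$ be the positive root of $\eta(\eta+C_2)=4$; on $\{\eta\geq\eta_\ast\}$ the integrand in the identity $\int_{a_n}^{b_n}(1-4/(\eta(\eta+w_2)))\,dt=\ln(C_2/C_1)$ is nonnegative. A measure-theoretic extraction in the spirit of Lemma \ref{LIM2} should yield a uniform positive lower bound $|E_n|\geq c_0>0$, where $E_n:=[a_n,b_n]\cap\{\eta\leq\eta_\ast\}$. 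On $E_n$, the bounds $w_1\geq w_2\geq C_2$, $\eta\leq\eta_\ast$, and $u/w_1=1-1/R\geq 1-1/R_0>0$ combine to give an explicit uniform lower bound $\delta>0$ on the integrand $4u/[\eta(\eta+w_1)(\eta+w_2)]$, hence $\ln R(b_n)-\ln R(a_n)\geq\delta c_0=:\Delta>0$ and $R(t)\to+\infty$. The case $\lim w_2=c\in(0,+\infty]$ is handled similarly.

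The main obstacles are twofold. First, the measure-theoretic step is more delicate than in Lemma \ref{LIM2}: there the integrand $1-4/(\eta^2+w_2^2)$ admits the $\eta$-free lower bound $1-4/C_2^2$, whereas here $1-4/(\eta(\eta+w_2))$ can be arbitrarily negative as $\eta\to 0^+$. To handle this I would use the integrability constraint $\int_{a_n}^{b_n}4/(\eta(\eta+w_2))\,dt=(b_n-a_n)+\ln(C_1/C_2)$ to control the time spent in a small-$\eta$ region and then exploit an intermediate $\eta$-regime to secure the desired measure bound on $E_n$. Second, converting $R(t)\to+\infty$ into a contradiction is harder than in Lemma \ref{LIM2}, since vanishing solutions of (\ref{CON2}) may be unbounded and the a priori bound $u<2$ used there is unavailable. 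Here I would combine the growth of $\ln R$ over oscillations, the hypothesis $\varlimsup w_2>0$ (giving $w_1=Rw_2\to+\infty$ along a subsequence), and the vanishing condition $w_1=o(e^t)$ to derive a density estimate on oscillations that ultimately contradicts the vanishing of $w_1$.
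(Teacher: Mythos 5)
Your starting point is sound and matches the paper's in spirit: subtract the two ODEs, exhibit a monotone functional of the pair $(w_1,w_2)$, and extract a uniform positive increment over each oscillation of $w_2$. You also correctly identify the two technical pressure points. But your proposal leaves both genuinely open, and the paper resolves them by a different bookkeeping that you should note.

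The more serious issue is your choice of tracked quantity. You monitor $\ln R = \ln(w_1/w_2)$, whose divergence gives only $w_2/w_1\to 0$. Since vanishing solutions of (\ref{CON2}) can be unbounded, $R\to\infty$ is entirely compatible with $\varlimsup w_2>0$ and with $w_1=o(e^t)$; there is no immediate contradiction, and your proposed ``density estimate on oscillations'' is not a workable plan as stated. The paper instead tracks $\ln(u/w_1)=\ln(1-1/R)$, whose derivative is
\[
M=\frac{4w_2}{\eta(\eta+w_1)(\eta+w_2)}.
\]
Because $u<w_1$, the quantity $\ln(u/w_1)$ is bounded above by $0$, so $\int_0^\infty M<\infty$ automatically; once the oscillation argument produces $\int_{a_n}^{b_n}M\geq C_6>0$ for all large $n$, the contradiction is instantaneous. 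Nothing further about $w_1$ or $R$ is needed. (Note also that $\int M=\int(R-1)^{-1}\,d(\ln R)$, so blowing up $\ln R$ does \emph{not} force $\int M$ to blow up when $R\to\infty$; the two conclusions are not equivalent.)

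The second issue is the measure bound $|E_n|\geq c_0$. As you point out, $1-4/(\eta(\eta+w_2))$ has no $\eta$-free lower bound, and the constraint $\int_{a_n}^{b_n}4/(\eta(\eta+w_2))\,dt=(b_n-a_n)+\ln(C_1/C_2)$ cannot by itself prevent a tiny set of very small $\eta$ from carrying the whole integral. The paper avoids estimating $|E_n|$ altogether. It writes $M\geq C_5\bigl(\tfrac{4}{\eta(\eta+w_1)}-1\bigr)$ on $\{\eta<2\}$ (with $C_5=1/(1+2/C_3)$ coming from $w_2\geq C_3$ there), observes that on $\{\eta\geq 2\}$ the bracket is $\leq 0$, and then integrates the bracket over the \emph{whole} interval using the $w_1$-equation: $\int_{a_n}^{b_n}\bigl(\tfrac{4}{\eta(\eta+w_1)}-1\bigr)dt=\ln\bigl(w_1(a_n)/w_1(b_n)\bigr)$. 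The needed fact $w_1(a_n)/w_1(b_n)>\sqrt{C_2/C_3}$ for large $n$ is itself derived from $\int M<\infty$, which forces $c(t)=\tfrac{w_1(0)}{w_2(0)}\cdot\tfrac{w_2(t)}{w_1(t)}$ to converge, so that $w_1$ inherits the oscillations of $w_2$ up to a factor tending to $1$. Your proposal has no analogue of this preliminary link between the oscillations of $w_2$ and those of $w_1$, and it is precisely what makes the increment bound work.

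In short: your decomposition and the identification of the two obstacles are correct, but both obstacles remain unfilled in your sketch, and the paper's route — tracking the bounded quantity $\ln(1-1/R)$ rather than the unbounded $\ln R$, and trading the measure estimate for an exact integration of the $w_1$-equation over $[a_n,b_n]$ — dissolves them rather than confronting them head-on.
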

\begin{proof}
The  proof is similar to that of lemma \ref{LIM2}. Assume $w_1>w_2$ are two solutions of (\ref{CON2}), and that $w_1$ satisfies $\displaystyle\varlimsup_{t\rightarrow+\infty}w_1(t)=C>0$: we want to prove that $\displaystyle\lim_{t\rightarrow+\infty}w_2(t)=0$.\\
Subtracting  these solutions as before, we get
\begin{align*}
\dot{w_1}(t)-\dot{w_2}(t)=w_1(t)-w_2(t)-\frac{4(w_1(t)-w_2(t))}{(\eta(t)+w_1(t))(\eta(t)+w_2(t))}.
\end{align*}
Set $v(t)=w_1(t)-w_2(t)$: we have 
\begin{align*}
\frac{\dot{v}(t)}{v(t)}&=1-\frac{4}{(\eta(t)+w_1(t))(\eta(t)+w_2(t))}\\
&=1-\frac{4}{\eta(t)(\eta(t)+w_1(t))}+\frac{4w_2(t)}{\eta(t)(\eta(t)+w_1(t))(\eta(t)+w_2(t))}
\end{align*}
Integrating this equality from $0$ to $t$ we get
\begin{align*}
    \ln\frac{v(t)}{v(0)}&=\ln\frac{w_1(t)}{w_1(0)}+\int_0^{t}\frac{4\mathrm{d}s}{\eta(s)(\eta(s)+w_1(s))(1+\frac{\eta(s)}{w_2(s)})}\\
    &=\ln\frac{w_1(t)}{w_1(0)}+\int_0^{t}M(s)\mathrm{d}s
\end{align*}
We only need to prove that the last term is unbounded as $t$ goes to $+\infty$. If not, we have $w_2(t)/w_2(0)=c(t)w_1(t)/w_1(0)$, where $c$ is a decreasing function converging to a constant $C_1<1$. Just like in lemma \ref{LIM2}, there are two cases: the limit of $w_2$ as $t\rightarrow+\infty$ does not exist or it is a positive number.

In the first case, we choose two sequences $\{a_n\}$ and $\{b_n\}$ increasing to infinity as before: $w_2(a_n)=C_2, w_2(b_n)=C_3, C_2>C_3>0, a_n<b_n<a_{n+1}$ and $b_n$ is the first time after $a_n$ such that $w(b_n)=C_3$. For sufficiently large $n$, we have $w_1(a_n)/w_1(b_n)>\sqrt{C_2/C_3}=C_4$, so that
\begin{align*}
    \int_{a_n}^{b_n}M(t)\mathrm{d}t&>\int_{(a_n,b_n)\cap\{\eta<2\}}\frac{4\mathrm{d}t}{\eta(t)(\eta(t)+w_1(t))(1+\frac{2}{C_3})}\\
    &>C_5\int_{(a_n,b_n)\cap\{\eta<2\}}\left(\frac{4}{\eta(t)(\eta(t)+w_1(t))}-1\right)\mathrm{d}t\\
    &>C_5\int_{a_n}^{b_n}\left(\frac{4}{\eta(t)(\eta(t)+w_1(t))}-1\right)\mathrm{d}t\\
    &=-C_5\ln\frac{w_1(b_n)}{w_1(a_n)}>-C_5\ln\frac{1}{C_4}=C_6>0
\end{align*}
implying that the integral of $M$ is unbounded.

In the second case, estimating $|(0,+\infty)\cap\{\eta<2\}|$ as in lemma {\ref{LIM2}}, we arrive at the same conclusion.
\end{proof}
\begin{remark}
Equation (\ref{CON2}) may have several vanishing solution which are unbounded. And it is easy to prove that if (\ref{CON2}) has an unbounded solution then the driving function must satisfy $\displaystyle\varliminf_{t\rightarrow+\infty}\eta(t)=0$. This condition will play an important role later. Actually, this property tells us that, except maybe for the smallest one, if $x_1$ and $x_2$ are two captured solution which are driven by $\xi$, then $\displaystyle\lim_{t\rightarrow+\infty}|x_1(t)-x_2(t)|=0$.
\end{remark}
The next question is: what kind of driving function will make (\ref{CON2}) vanishing but (\ref{CON1}) not?
\begin{proposition}\label{LIM4}
If a bounded driving function $\eta$ is vanishing in equation (\ref{CON2}) but is not in equation (\ref{CON1}), then $\displaystyle\varliminf_{t\rightarrow+\infty}\eta(t)=0$.
\end{proposition}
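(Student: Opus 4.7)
The plan is to prove the contrapositive: assuming $\eta$ is bounded and $L := \varliminf_{t\to+\infty}\eta(t) > 0$, I will show that vanishing of equation (\ref{CON2}) forces vanishing of equation (\ref{CON1}). First I fix $c$ with $0 < c < \min(L,2)$, which is possible since $L>0$; by definition of $\varliminf$ there exists $T_0$ with $\eta(t)\geq c$ on $[T_0,+\infty)$.

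Next I will produce a vanishing solution of (\ref{CON2}) that tends to $0$. The remark following lemma \ref{LIM3} rules out unbounded vanishing solutions when $\varliminf\eta>0$, so every vanishing solution is bounded, and lemma \ref{LIM3} then allows at most one such solution with $\varlimsup w>0$. Starting from any vanishing $w^*$ and, if necessary, replacing it by the solution with initial value $w^*(0)/2$ --- which by uniqueness of (\ref{CON2}) stays below $w^*$ and so is still vanishing, and must have $\varlimsup=0$ by lemma \ref{LIM3} --- I obtain a vanishing solution $w$ with $w(t)\to 0$. After an additional time shift so that $w(t)<c$ on $[0,+\infty)$, I assume without loss of generality that $\eta\geq c$ and $w<c$ on $[0,+\infty)$.

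The heart of the argument is a pointwise comparison between the right-hand sides $f_1(t,z) = z-\tfrac{4z}{\eta^2+z^2}$ of (\ref{CON1}) and $f_2(t,z) = z-\tfrac{4z}{\eta^2+\eta z}$ of (\ref{CON2}): since $z^2<\eta z$ whenever $0<z<\eta$, we have $f_1(t,z)<f_2(t,z)$ on this range. Let $y$ be the solution of (\ref{CON1}) with $y(0)=w(0)$ and set $u=w-y$. At any time where $y<\eta$, the mean-value theorem lets me write $\dot{u}(t)=A(t)\,u(t)+h(t)$ with $A(t)=\partial_z f_2(t,\tilde z)$ and $h(t)=(f_2-f_1)(t,y(t))>0$; since $u(0)=0$, the variation-of-constants formula immediately yields $u\geq 0$, i.e.\ $y\leq w$, on any interval where $y<\eta$.

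The main obstacle is promoting $y<\eta$ from local to global, and I will handle it by a first-exit bootstrap. If $t_1>0$ were the smallest time with $y(t_1)=\eta(t_1)$, then on $[0,t_1)$ the previous step would give $y\leq w$, so by continuity $\eta(t_1)=y(t_1)\leq w(t_1)<c\leq \eta(t_1)$, a contradiction. Therefore $y<\eta$ on $[0,+\infty)$, and thus $0<y\leq w<c<2$ throughout $(0,+\infty)$. Lemma \ref{4.4} then concludes that $y$ is a vanishing solution of (\ref{CON1}), contradicting the hypothesis and completing the proof.
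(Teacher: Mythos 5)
Your proof is correct and follows essentially the same route as the paper: both pick a vanishing solution $w$ of (\ref{CON2}) lying below the lower bound $c$ of $\eta$, launch the solution $y$ of (\ref{CON1}) from the same initial value, and compare the two ODEs pointwise (using $z^2<\eta z$ for $0<z<\eta$) to force $y\leq w$ and hence $y<2$, then invoke lemma \ref{4.4}. Your version is more careful than the paper's terse argument — you explicitly justify the existence of a vanishing $w$ with $w\to 0$ via lemma \ref{LIM3} and its remark, make the time shift explicit, and carry out a variation-of-constants plus first-exit bootstrap to show the comparison $y\leq w<c\leq\eta$ holds globally (the paper tacitly assumes $y<c$ throughout); these are exactly the gaps one would want filled, but the underlying idea is the same.
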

\begin{proof}
We assume that $\eta$ has a positive lower bound $c>0$. From lemma \ref{LIM3}, we can choose a vanishing solution of (\ref{CON2}) $w(t)$ s.t. $w(t)<c\leq\eta(t)$. Then we consider the solution $y$ of (\ref{CON1}) with initial value $y(0)=w(0)$. We have 
\begin{align*}
    \frac{\mathrm{d}(\ln y)}{\mathrm{d}t}=1-\frac{4}{\eta(t)^2+y(t)^2}<1-\frac{4}{\eta(t)^2+cy(t)}\leq1-\frac{4}{\eta(t)^2+\eta(t)y(t)}
\end{align*}
hence $y$ is smaller than $w$ and $y$ is a vanishing solution of (\ref{CON1}), this finishes the proof. 
\end{proof}

\section{Proof of theorem \ref{THM3}}

As already mentioned, solving ($P_2$) is linked to the study of the case  $$\displaystyle\lim_{z\rightarrow\lambda(T)}g^{-1}_T(z)$$ does not exist. If it happens, we have four cases:

\begin{enumerate}[(a)]
\item There are at least two limit points which are accessible.
\item There is no accessible limit point.
\item There is only one accessible point $x$ and $x\in \mathbb{R}$.
\item There is only one accessible point $z$ and $z\in \mathbb{H}$.
\end{enumerate}

\begin{figure}[h]\label{example}
\centering
\subfigure[two accessible points]{
\label{Fig.sub.1}
\includegraphics[width=0.4\textwidth]{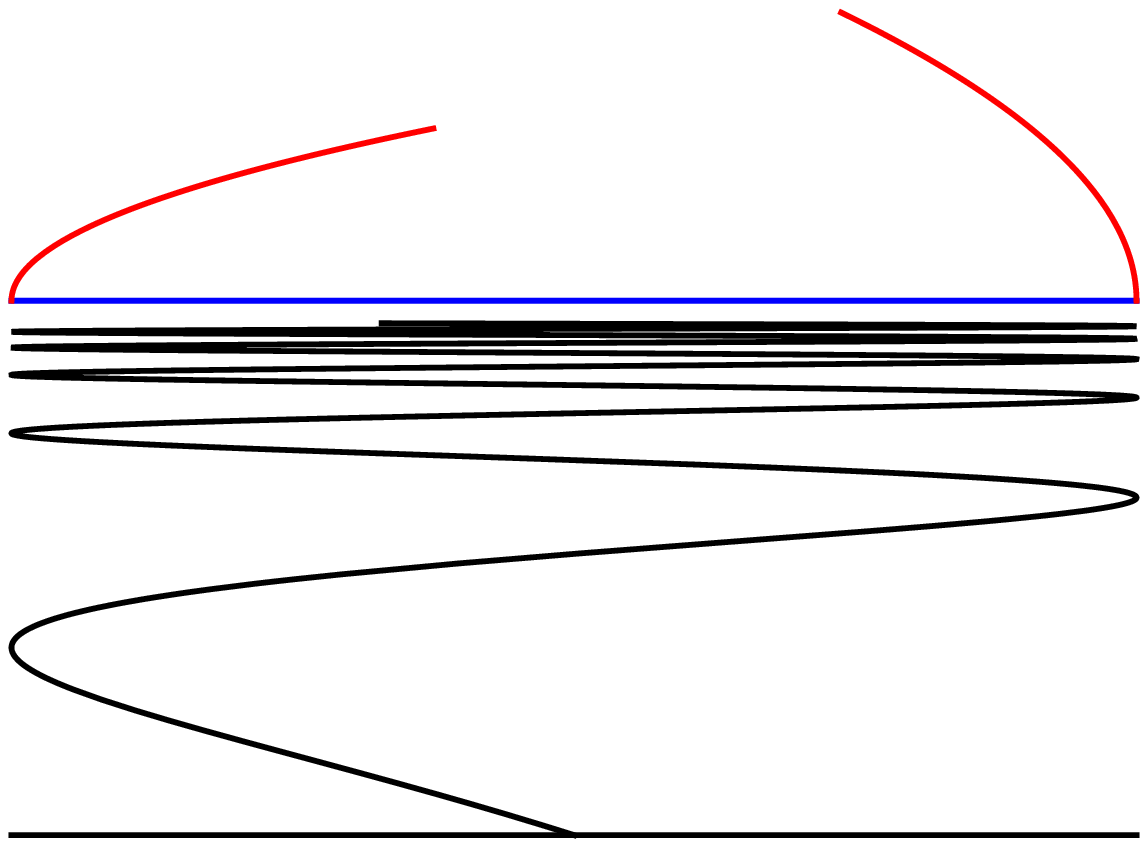}}
\subfigure[no accessible point]{
\label{Fig.sub.2}
\includegraphics[width=0.4\textwidth]{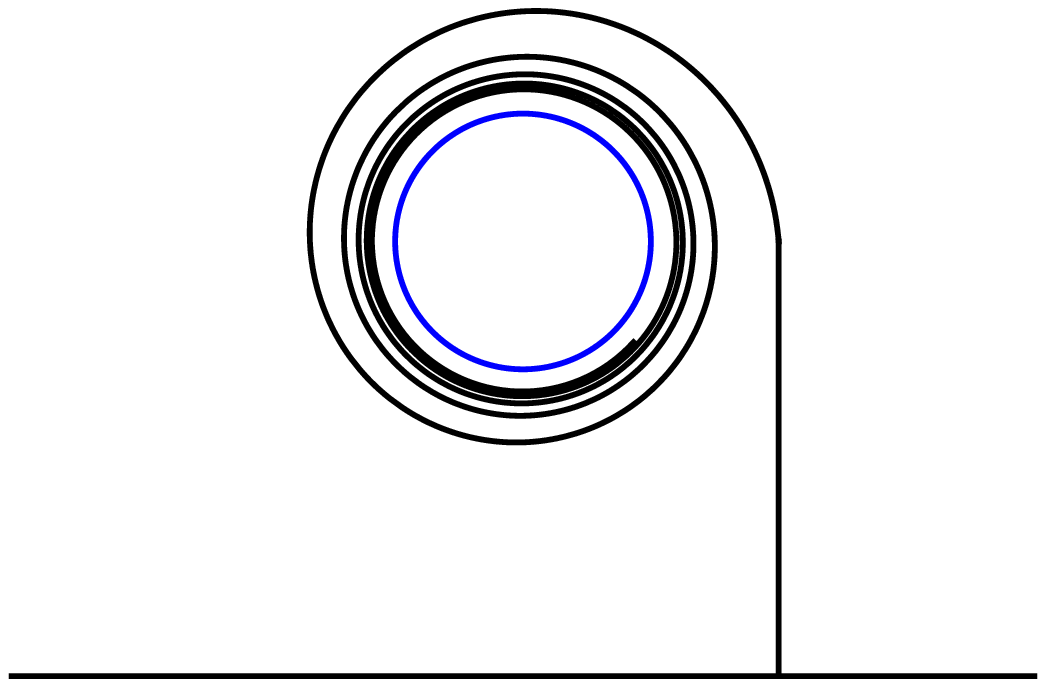}}
\subfigure[one accessible point in $\mathbb{R}$]{
\label{Fig.sub.3}
\includegraphics[width=0.4\textwidth]{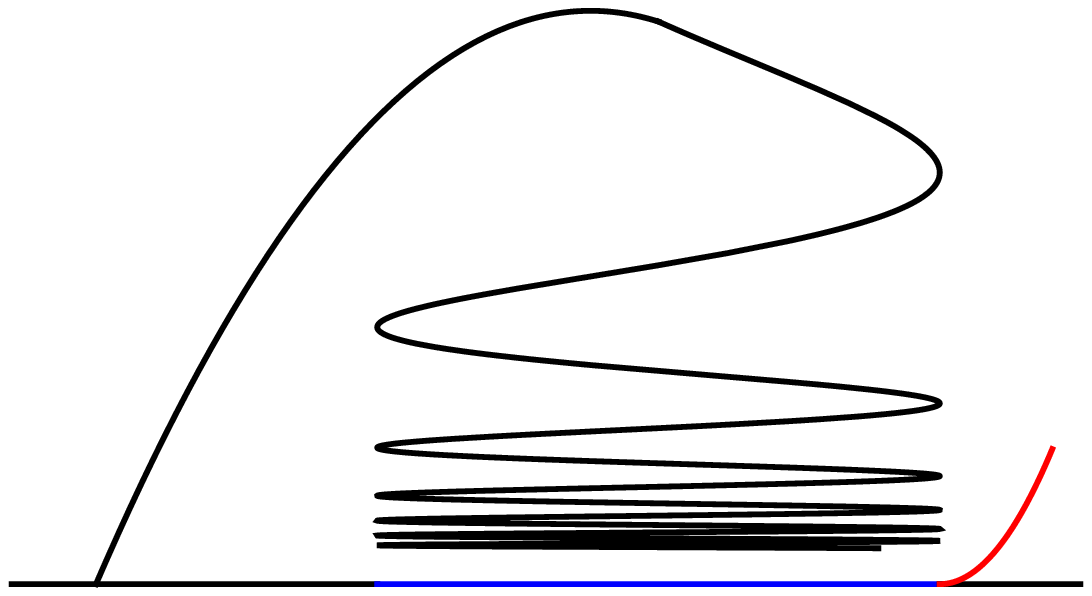}}
\subfigure[one accessible point in $\mathbb{H}$]{
\label{Fig.sub.4}
\includegraphics[width=0.4\textwidth]{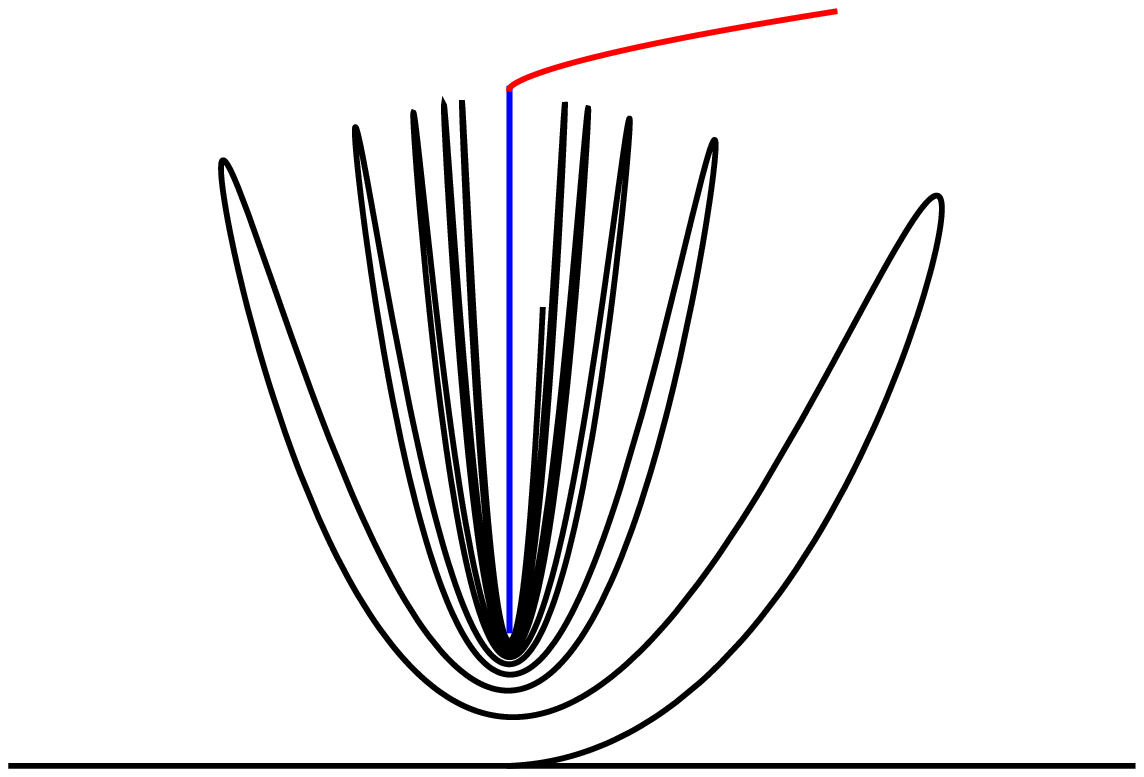}}
\caption{examples of the Loewner equation is not generated by curve}
\label{Fig.lable}
\end{figure}

Figure \ref{example} illustrates these $4$ cases. In this figure, the blue lines are the set of limit points and the red line is the path $\beta$ of the definition of accessible point.

In \cite{pommerenke1966loewner}, it has been proved that if (a) holds, then the driving function is not continuous at time $T$. The analogous property of driving function if (b) holds is unknown except for a special example given in \cite{lind2010collisions}.

Our purpose is to discuss (c) and (d). About (c), like before, we may, w.l.o.g, assume $T=1$ . In figure (c) of \ref{example}, the limit points of $g^{-1}_1(z)$ form an interval of the real axis. Every limit point $x$ is the initial point of   captured solution at time $1$ of(\ref{RLE}). And every captured solution will give a driving function $\eta$ of (\ref{CON2}) after time change. From (\ref{RKEY}), we have that 
$$e^{-t}(x(t)-\dot{x}(t))=4e^{-t}/\eta(t).$$
 Integra this identity from $t$ to $+\infty$, we find the correspondence between $\xi$ and $\eta$ :
\begin{equation}\label{etaxi}
    \xi(t)=\eta(t)+\int_{0}^{+\infty}\frac{4e^{-s}}{\eta(t+s)}\mathrm{d}s
\end{equation}

Let $C_+[0,+\infty)$ be the space of positive continuous function in $[0,+\infty)$, and 
$$\mathcal{V}\dot{=}\{\eta\in C_+[0,+\infty)|\int_0^\infty e^{-s}/\eta(s)ds<\infty\}.$$
Like $\mathcal{I}$ in theorem \ref{RSTRU}, $\mathcal{V}$ is a cone.

We define an operator $H$ on $\mathcal{V}$ by:
$$H(\eta)(t)=\eta(t)+\int_{0}^{+\infty}\frac{4e^{-s}}{\eta(t+s)}\mathrm{d}s.$$
If $\xi$ is a captured driving function at $T$ after time change, then there might be several $\eta$s satisfying $H(\eta)=\xi$. From the transformation and time change, every such $\eta$ corresponds to a captured solution at time $T$, and a captured solution corresponds to a initial value $x$ in $\mathbb{R}$ s.t. $T_x=T$, the mapping between $\eta$ and $x$ is $\eta\mapsto\lambda(0)-\eta(0)$ when $T=1$. We write $A=\{\eta|H(\eta)=\xi\}$ and $X=\{\lambda(0)-\eta(0)|\eta\in A\}$, then $X$ is a single point or a right closed interval($(x_1,x_2]$ or $[x_1,x_2]$).

\begin{lemma}\label{several}
Let $\xi$ be a captured driving function of (\ref{HRLE}). If there exists $\eta_0\in A$ satisfying $\displaystyle\varliminf_{t\rightarrow+\infty}\eta_0(t)>0$, and $\eta_0$ is a vanishing driving function of equation (\ref{CON2}), then $\displaystyle\varliminf_{t\rightarrow+\infty}\eta(t)>0,\forall\eta\in A$.
\end{lemma}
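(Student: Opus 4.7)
For each $\eta \in A$, let $x_\eta$ be the captured solution of (\ref{HRLE}) associated with $\eta$ by the correspondence $x_\eta(t)=\int_{0}^{+\infty}4e^{-s}/\eta(t+s)\,ds$, so that $\eta=\xi-x_\eta$; in particular $\eta_0=\xi-x_0$. Set
$$w_\eta(t) = x_\eta(t)-x_0(t),$$
so that $\eta=\eta_0-w_\eta$. Applying the derivation of the dual equation (\ref{CON2}) with $x_0$ and $x_\eta$ playing the roles of $X_0$ and $X$ in the pre-time-change variables, one checks that $w_\eta$ solves (\ref{CON2}) with driving function $\eta_0$. Since both $x_0$ and $x_\eta$ are captured, $e^{-t}x_0(t),\,e^{-t}x_\eta(t)\to 0$, hence $e^{-t}w_\eta(t)\to 0$, and $w_\eta$ is a vanishing solution of (\ref{CON2}) driven by $\eta_0$.

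The easy half of the proof is the subcase $w_\eta\le 0$ (equivalently $x_\eta\le x_0$): the pointwise inequality $\eta=\eta_0-w_\eta\ge\eta_0$ immediately yields $\varliminf\eta\ge\varliminf\eta_0>0$. When $w_\eta>0$, the remark preceding the lemma combined with the hypothesis $\varliminf\eta_0>0$ forces every vanishing solution of (\ref{CON2}) driven by $\eta_0$ to be bounded, and Lemma~\ref{LIM3} then tells us that among all positive vanishing solutions at most one can satisfy $\varlimsup w_\eta>0$; every other converges to $0$. For any such non-exceptional $\eta$, $|\eta-\eta_0|=w_\eta\to 0$, and therefore $\varliminf\eta=\varliminf\eta_0>0$.

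The entire substance of the lemma is thus concentrated in the (at most one) exceptional $\eta_*\in A$ corresponding to the greatest captured solution $x_*$, for which $w_*:=x_*-x_0>0$ is bounded but does not tend to $0$. The membership $\eta_*\in\mathcal{V}$ gives only the pointwise bound $w_*<\eta_0$, whereas what is needed is the strict asymptotic refinement $\varlimsup w_*<\varliminf\eta_0$, which delivers $\varliminf\eta_*>0$. This is where I expect the main obstacle to lie. The natural route is a contradiction argument: if $\eta_*(t_n)\to 0$ along some sequence $t_n\to+\infty$, then $w_*(t_n)=\eta_0(t_n)-\eta_*(t_n)$ approaches $\eta_0(t_n)$ from below, and by switching reference to $x_*$—so that $-w_*=x_0-x_*$ becomes a (negative) vanishing solution of (\ref{CON2}) driven by $\eta_*$, with $\eta_0=\eta_*+w_*$—one applies the remark/Lemma~\ref{LIM3} pair to this new driving function together with the ODE (\ref{CON2}) evaluated at the near-degenerate points $t_n$ in order to produce either an unbounded vanishing solution (contradicting the remark in conjunction with $\varliminf\eta_0>0$) or a second independent vanishing solution with $\varlimsup>0$ (contradicting Lemma~\ref{LIM3}).
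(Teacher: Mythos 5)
Your decomposition runs into a sign error that ends up swapping the easy and the hard cases. Tracking the paper's derivation of (\ref{CON2}): for two captured solutions $X_0,X$ of (\ref{RLE}) with $W=X-X_0$, the time-changed quantity $w=W_-\circ\sigma$ equals $x_0-x_\eta$ in your (HRLE) coordinates, \emph{not} $x_\eta-x_0$, because the renormalization $X\mapsto(\lambda(1)-X)/\sqrt{1-t}$ reverses order. Equivalently, the positive solution of (\ref{CON2}) driven by $\eta_0$ attached to a given $\eta\in A$ is $\eta-\eta_0$, which is $>0$ precisely when $\eta>\eta_0$. You can check this directly: substituting $w_\eta=x_\eta-x_0=\eta_0-\eta$ into (\ref{HRLE}) yields $\dot w_\eta=w_\eta-4w_\eta/(\eta_0^2-\eta_0 w_\eta)$, with the wrong sign in the denominator, so $w_\eta$ does not satisfy (\ref{CON2}) with driving function $\eta_0$. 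Consequently, the family to which Lemma~\ref{LIM3} (with $\eta_0$ as driving function) applies is your ``easy'' case $\eta\ge\eta_0$, which needs no lemma at all; and in the case $\eta<\eta_0$, where you do need a nontrivial input, your $w_\eta>0$ is not a solution of (\ref{CON2}) driven by $\eta_0$ and the lemma cannot be invoked.

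The repair is precisely the paper's maneuver, and it also dissolves your ``exceptional $\eta_*$'' worry. For $\eta\in A$ with $\eta<\eta_0$, \emph{switch the reference}: take $\eta$ as the driving function of (\ref{CON2}); then $\eta_0-\eta$ is a genuinely positive vanishing solution. The hypothesis that $\eta_0$ is a vanishing driving function of (\ref{CON2}) guarantees that $\eta_0$ is not the largest element of $A$ (there is $\eta_{x_2}>\eta_0$ pointwise, attached to the right endpoint $x_2$), so $\eta_0-\eta$ is never the maximal vanishing solution driven by $\eta$; the exceptional one in this picture is $\eta_{x_2}-\eta$. The proof of Lemma~\ref{LIM3} shows that any non-maximal vanishing solution tends to $0$, so $\eta_0-\eta\to 0$ and $\varliminf\eta=\varliminf\eta_0>0$, directly and for every $\eta<\eta_0$. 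There is no residual hard case, and the contradiction sketch at the end of your proposal (producing an unbounded solution, or a second $\varlimsup>0$ solution, from near-degenerate times $t_n$) is not needed and, as written, does not go through: under the for-contradiction assumption $\varliminf\eta_*=0$ the remark preceding the lemma gives no information, and nothing in the sketch manufactures a second exceptional solution for a fixed driving function.
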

\begin{proof}
If $A$ consists of only one function, then the lemma is true. So we assume there are at least two functions in $A$. As we mentioned before, every function in $A$ corresponds to an interval in $X=[x_1,x_2]$ or $(x_1,x_2]$, and let $x_0\in X$ be the limit point corresponding to $\eta_0$. Because $\eta_0$ is a vanishing driving function of (\ref{CON2}), we have $x_0<x_2$. We choose two points $\hat{x}_1,\hat{x}_2$ from $X$ and $\hat{x}_1<\hat{x}_2<x_2$, $\hat{\eta}_1$ and $\hat{\eta}_2$ are the corresponding functions in $A$. Let $x$ be the solution of (\ref{CON2}) with initial value $\hat{x}_2-\hat{x}_1$, and $\hat{\eta}_1$ the corresponding driving function. Then $x$ is a captured solution, and $\hat{\eta}_1(t)=\hat{\eta_2(t)}+x(t)$. By lemma \ref{LIM3}, we have $\displaystyle\lim_{t\rightarrow+\infty}x(t)=0$. Notice that $\hat{x}_1$ and $\hat{x}_2$ can be all the points in $X\backslash\{x_2\}$ which includes $x_0$, which finishes the proof.
\end{proof}
\begin{lemma}\label{LPIR}
Let $I=\{x\in\mathbb{R}|T_x=1\}$. If $I$ is an interval $[x_1,x_2]$, and there exists $x_0\in [x_1,x_2)$ and $c>0$ s.t. the corresponding function $\eta_0$ satisfies $\eta_0(t)>c,\forall t$, then for all $x\in(x_1,x_2)$, $\exists\varepsilon>0$ s.t. $B(x,\varepsilon)\cap\Bar{\mathbb{H}}\subset J_1\dot{=}K_1\backslash\cup_{t<1}K_t$, where $B(x,\varepsilon)$ is the disk of center $x$ and radius $\varepsilon$.
\end{lemma}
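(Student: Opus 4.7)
The plan is to combine the dual-equation framework of Lemma \ref{several} with Proposition \ref{LIM4} to reduce the statement to a vanishing property for the imaginary Loewner equation (\ref{CON1}), and then to propagate this vanishing from the real captured solution $X_x$ to the genuine complex trajectories starting from nearby points $z=x'+iy\in\bar{\mathbb{H}}$.

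First, I would verify that $\eta_0$ is a vanishing driving function for (\ref{CON2}). Since $I=[x_1,x_2]$ is non-degenerate and $x_0\in[x_1,x_2)$, pick any second captured point $x'\in(x_0,x_2]$; then $W(t)=X_{x'}(t)-X_{x_0}(t)$ is a positive solution of (\ref{CON2}) with driving $\eta_0$, and since both $X_{x_0}(t)$ and $X_{x'}(t)$ tend to $\lambda(1)$ as $t\to 1$, one has $e^{-t}w(t)=W(1-e^{-2t})\to 0$, i.e.\ $w$ is vanishing. Together with the hypothesis $\eta_0(t)>c>0$, which gives $\varliminf\eta_0>0$, Lemma \ref{several} applies and yields $\varliminf_{t\to+\infty}\eta(t)>0$ for every $\eta\in A$, in particular for each $\eta_x=\xi-x$ with $x\in(x_1,x_2)$.

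Second, the same comparison-of-captured-solutions argument applied with $x\in(x_1,x_2)$ in place of $x_0$ shows that $\eta_x$ is itself vanishing for (\ref{CON2}), with $\varliminf\eta_x>0$. Since $\eta_x\leq\xi$ is bounded, the contrapositive of Proposition \ref{LIM4} then forces $\eta_x$ to be a vanishing driving function for the imaginary equation (\ref{CON1}) as well. By Lemma \ref{4.4} and Lemma \ref{LIM2}, there is a threshold $y^\ast=y^\ast(x)>0$ such that every solution of (\ref{CON1}) with driving $\eta_x$ and initial value in $(0,y^\ast]$ stays strictly between $0$ and $2$ for all time and tends to $0$; after undoing the time change this says the pure imaginary flow above the real captured trajectory $X_x$ vanishes exactly at time $1$ while remaining in $\mathbb{H}$ on $[0,1)$.

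Third, I would transport this to the full complex Loewner trajectory from $z=x'+iy$ for $(x',y)$ in a small half-neighborhood of $(x,0)$ in $\bar{\mathbb{H}}$. By continuous dependence on initial data on every compact subinterval of $[0,1)$, the real part $X(t)$ stays close to $X_x(t)$ as long as $Y(t)$ remains small, so the effective driving $\eta(t)=(X(t)-\lambda(t))/\sqrt{1-t}$ (after time change) of equation (\ref{CON1}) governing this trajectory's imaginary part stays close to $\eta_x$. A bootstrap keeping $Y$ within the vanishing regime $y(t)<2$ of Lemma \ref{4.4} then forces $Y(t)\to 0$ and $X(t)\to\lambda(1)$ as $t\to 1$. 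Because equation (\ref{ILE}) preserves positivity of $Y$, one has $Y(t)>0$ for every $t<1$, so $T_z=1$ exactly. Combined with the fact that real points in $(x-\varepsilon,x+\varepsilon)\subset(x_1,x_2)$ automatically lie in $I$ by hypothesis once $\varepsilon<\min(x-x_1,x_2-x)$, this yields $B(x,\varepsilon)\cap\bar{\mathbb{H}}\subset J_1$.

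The main obstacle is the bootstrap in the third step: keeping the perturbation $\eta-\eta_x$ uniformly small on the \emph{whole} interval $[0,\infty)$, i.e.\ up to $t=1$ in the original time, while simultaneously maintaining $Y$ in the vanishing regime, requires a careful quantitative exploitation of the strict lower bound $\varliminf\eta_x>0$ delivered by Lemma \ref{several}. Without this lower bound the perturbed driving function could drift into a region where (\ref{CON1}) ceases to be vanishing, and the argument would collapse.
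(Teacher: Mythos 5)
Your first two steps are sound and essentially reproduce the paper's setup: using Lemma \ref{several} to get the uniform lower bound $\varliminf\eta_x>0$ for every $\eta\in A$, and then reading Proposition \ref{LIM4} in the contrapositive to conclude that $\eta_x$ is a vanishing driving function for the imaginary equation (\ref{CON1}). This correctly identifies the reduction. However, your third step — transporting the vanishing from the purely real captured trajectory to the genuine complex trajectory starting at $z=x+i\varepsilon$ — has a genuine gap, which you yourself flag. Continuous dependence on compact subintervals of $[0,1)$ gives no uniform control as $t\to 1^-$ (equivalently as the time-changed variable $\to+\infty$), and a generic ``bootstrap'' keeping $Y$ in the regime $y<2$ is precisely the content that needs proving, not something one can invoke: the coupling in (\ref{TLE}) means the perturbation of the real part feeds back into the effective driving of the imaginary part, and without a global bound you cannot rule out $X(t)-\lambda(t)$ drifting so that the imaginary equation ceases to be vanishing.

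The paper closes this gap not by continuous dependence but by a concrete comparison argument. After the time change it sets $w_\varepsilon=x_\varepsilon-\hat{x}$ (the difference between the time-changed real part of the complex trajectory and the captured real solution $\hat{x}$), derives the exact ODE satisfied by $w_\varepsilon$, and then introduces \emph{two} barrier solutions with initial value $\varepsilon$: $w^\varepsilon$ solving (\ref{CON2}) and $y^\varepsilon$ solving (\ref{CON1}), both driven by $\eta=\xi-\hat{x}$. Using $\eta>C>0$ from Lemma \ref{several}, and taking $\varepsilon$ small enough so that $y^\varepsilon<w^\varepsilon<C/5$ (here Lemma \ref{LIM2} and Proposition \ref{LIM4} are used to guarantee these barriers decay), a first-contact-time computation at the hypothetical first $t_1$ with $w_\varepsilon(t_1)=w^\varepsilon(t_1)$ shows $\dot w_\varepsilon(t_1)<\dot w^\varepsilon(t_1)$, ruling out contact; simultaneously $y_\varepsilon<y^\varepsilon$ is maintained because $w_\varepsilon$ never reaches $\eta$. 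It is this algebraic comparison exploiting the specific form of (\ref{CON1}) and (\ref{CON2}), not perturbation theory, that propagates the vanishing globally. Your write-up correctly anticipates that the lower bound on $\eta_x$ is the decisive input, but stops short of the mechanism that actually uses it.
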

\begin{proof}
We consider the solutions $X_{\varepsilon}(t),Y_{\varepsilon}(t)$ of (\ref{TLE}) with initial value $(X_{\varepsilon}(0)=x,Y_{\varepsilon}(0)=\varepsilon)$. Performing the same time change as before we have 
\begin{equation*}
    \dot{x}_{\varepsilon}=x_{\varepsilon}-\frac{4(\xi-x_{\varepsilon})}{(\xi-x_{\varepsilon})^2+y_{\varepsilon}^2}
\end{equation*}
where $x_{\varepsilon},y_{\varepsilon},\xi$ correspond to $X_{\varepsilon},Y_{\varepsilon},\lambda$ after time change respectively.

We assume $\hat{x}$ is the captured solution of (\ref{HRLE}) with initial value $\lambda(1)-x$. Define $\eta=\xi-\hat{x}$: by last lemma, we may assume $\eta(t)>C>0$. Set $w_{\varepsilon}=x_{\varepsilon}-\hat{x}$, then the equation becomes
\begin{align}
        \dot{w}_{\varepsilon}&=w_{\varepsilon}-\frac{4w_{\varepsilon}}{(\eta-w_{\varepsilon})\eta}+\frac{4}{(\eta-w_{\varepsilon})^2+y^2_{\varepsilon}}\frac{y^2_{\varepsilon}}{\eta-w_{\varepsilon}}\\
        &=w_{\varepsilon}-\frac{4w_{\varepsilon}}{(\eta+w_{\varepsilon})\eta}-\frac{4}{\eta-w_{\varepsilon}}\left(\frac{2w^2_{\varepsilon}}{\eta^2+\eta w_{\varepsilon}}-\frac{y^2_{\varepsilon}}{y^2_{\varepsilon}+(\eta-w_{\varepsilon})^2}\right)
\end{align}
with the initial value  $w_{\varepsilon}(0)=0$. Let $w^{\varepsilon}$  be the solution of (\ref{CON2}) with initial condition $w^{\varepsilon}(0)=\varepsilon$: we claim that for sufficient small $\varepsilon$, $w_{\varepsilon}(t)<w^{\varepsilon}(t)<\eta(t), \forall t\in[0,+\infty)$.

Let $y^{\varepsilon}$ be the solution of (\ref{CON1}) with initial value $y^{\varepsilon}(0)=\varepsilon$: by lemma \ref{LIM2} and proposition \ref{LIM4}, for sufficient small $\varepsilon$, $y^{\varepsilon}(t)<w^{\varepsilon}(t)<C/5,\forall t$. Notice that $y^{\varepsilon}$ is driven by $\eta=\xi-\hat{x}$ and $y_{\varepsilon}$ by $|\xi-x_{\varepsilon}|$. Since $x_{\varepsilon}(t)>\hat{x}(t),\forall t>0$, we see that $y_{\varepsilon}<y^{\varepsilon}$ holds before the first time $t$ s.t. $\xi(t)=\hat{x}(t)$, which is equivalent to saying that $w_{\varepsilon}(t)=\eta(t)$.

At time $0$, $w_{\varepsilon}(t)<w^{\varepsilon}(t)<C/5$. Define $t_1$ to be the first time when $w_{\varepsilon}(t)=w^{\varepsilon}(t)$. If $t_1<+\infty$, then $w_\varepsilon(t_1)=w^\varepsilon(t_1)>y^\varepsilon(t_1)>y_\varepsilon(t_1)$, and $y^2_{\varepsilon}(t_1)+(\eta(t_1)-w_{\varepsilon}(t_1))^2>16\eta^2(t_1)/25>3\eta^2(t_1)/5>\eta(t_1)^2/2+\eta(t_1) w_{\varepsilon}(t_1)/2$. This implies that
$$\dot{w}_\varepsilon(t_1)<w_{\varepsilon}(t_1)-\frac{4w_{\varepsilon}(t_1)}{(\eta(t_1)+w_{\varepsilon}(t_1))\eta(t_1)}=\dot{w}^\varepsilon(t_1)$$
in contradiction with the fact that $t_1$ is first time when $w_\varepsilon=w^\varepsilon$. This proves the claim.

Now, as we mentioned, $y_\varepsilon<y^\varepsilon$ will hold before $w_\varepsilon=\eta$ happens. By the claim, it will never happen. This gives us that $\displaystyle\lim_{t\rightarrow+\infty}y_{\varepsilon}(t)=0$, and then $\displaystyle\lim_{t\rightarrow 1}Y_{\varepsilon}(t)=0$, so $(x,\varepsilon)\in J_1$ for sufficient small $\varepsilon$. And the estimation of $\varepsilon$ depends continuously on $\eta$, and $\eta$ depends continuously on $x$, this finishes the proof.
\end{proof}
Actually, lemma \ref{LPIR} proves that, except for $x_2$ which corresponds to the maximal captured solution, there is no limit point in $\mathbb{R}$ if the driving function satisfies this condition, Thus we only need to prove that there is no limit point in $\mathbb{H}$ as well.
There are many driving functions which satisfy the hypothesis of lemma \ref{LPIR} which may lead the existence of $\displaystyle\lim_{z\rightarrow\lambda(1)}g^{-1}_1(z)$, \ref{THM3} is just an example. Now we only need to prove that the driving function in theorem \ref{THM3} satisfies the hypothesis of the lemma \ref{LPIR}.

\begin{lemma}\label{last}
If $\lambda$, the driving function, satisfies $\forall t\geq 0,a<\frac{\lambda(1)-\lambda(t)}{\sqrt{1-t}}<b$ and  $a>\max\{4,(b+\sqrt{b^2-16})/2\}$, then there exists infinitely many captured solution at time $1$, s.t. the corresponding $\eta$ satisfies $\displaystyle\varliminf_{t\rightarrow+\infty}\eta(t)>0$.
\end{lemma}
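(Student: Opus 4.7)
After the time change $\sigma(t) = 1 - e^{-2t}$ of Section 2.1, the hypothesis becomes $a < \xi(t) < b$ for every $t \geq 0$; the goal is to exhibit infinitely many captured solutions of (\ref{HRLE}) whose associated $\eta = \xi - x$ stays uniformly bounded below by some $\delta_0 > 0$. The guiding picture is the constant-coefficient case $\xi \equiv c > 4$: there $\dot x = x - 4/(c - x)$ has two equilibria $x_\pm(c) = (c \pm \sqrt{c^2-16})/2$, the roots of $x^2 - c x + 4 = 0$, which satisfy the Vieta relations $x_-(c) x_+(c) = 4$ and $x_-(c) + x_+(c) = c$, with $x_-$ strictly decreasing and $x_+$ strictly increasing in $c$. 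Since the hypothesis forces $b > 4$ and $a > x_+(b)$, I plan to build a compact forward-invariant interval for the (time-dependent) ODE by using the worst-case equilibria at the two ends, namely $R := [x_-(a), x_+(b)]$.

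The forward invariance of $R$ reduces to two boundary checks. At the left endpoint $x = x_-(a)$, the inequality $\xi(t) > a$ combined with $x_-(a) + x_+(a) = a$ gives $\xi(t) - x_-(a) > x_+(a) = 4/x_-(a)$, so $\dot x > x_-(a) - x_-(a) = 0$. At the right endpoint $x = x_+(b)$, the bound $\xi(t) < b$ together with $x_-(b) + x_+(b) = b$ gives $\xi(t) - x_+(b) < x_-(b) = 4/x_+(b)$; here the hypothesis $a > x_+(b) = (b+\sqrt{b^2-16})/2$ is used twice, first to ensure $\xi(t) - x_+(b) > a - x_+(b) > 0$ so that the fraction is positive, and second to supply the uniform lower bound $\delta_0 := a - x_+(b) > 0$ on $\eta$. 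Hence $\dot x < x_+(b) - x_+(b) = 0$, and trajectories cannot leave $R$ through either end.

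Consequently, every initial value $x_0 \in R$ yields a solution $x$ of (\ref{HRLE}) with $x(t) \in R$ for all $t \geq 0$: in particular $0 < x_-(a) \leq x(t) \leq x_+(b) < a < \xi(t)$, so $x$ is captured, and $\eta(t) = \xi(t) - x(t) \geq \delta_0$ for all $t$, giving $\varliminf_{t \to +\infty} \eta(t) \geq \delta_0 > 0$. Since $a > 4$ forces $x_-(a) < x_+(a) \leq x_+(b)$, the interval $R$ contains a continuum of points; by uniqueness for the ODE, distinct values of $x_0$ produce distinct captured solutions, and correspondingly distinct captured initial values $X(0) = \lambda(1) - x_0$ for (\ref{RLE}). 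The only nontrivial step in the argument is identifying the correct invariant region; the subsequent algebra is a short application of the Vieta identity $x_-(c) x_+(c) = 4$ at the endpoints $c = a$ and $c = b$.
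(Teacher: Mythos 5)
Your proof is correct and follows essentially the same route as the paper: both arguments locate a compact interval between the fixed points of the autonomous comparison equations $\dot x = x - 4/(c-x)$ at $c=a$ and $c=b$, and use $a<\xi<b$ together with $a>x_+(b)=(b+\sqrt{b^2-16})/2$ to trap the flow, so that $x$ stays bounded away from $\xi$ and hence $\eta=\xi-x\geq a-x_+(b)>0$. The paper starts solutions at $x_0\in(x_+(b),a)$ and argues they flow into the band $[x_+(a),x_+(b)]$, whereas you exhibit the forward-invariant interval $[x_-(a),x_+(b)]$ directly and start inside it --- a marginally cleaner packaging of the same idea.
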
  
\begin{proof}
We consider the solution $x$ of (\ref{HRLE}) with initial value $x_0$ satisfying $(b+\sqrt{b^2-16})/2<x_0<a$. It is easy to check that the solution is decreasing when $x(t)>(b+\sqrt{b^2-16})/2$ and increasing when $x(t)<(a+\sqrt{a^2-16})/2$. So the solution will lie between $(b+\sqrt{b^2-16})/2$ and $(a+\sqrt{a^2-16})/2$ after a positive time, which implies that $x$ is a captured solution and the corresponding $\eta>a-(b+\sqrt{b^2-16})/2$ after that time. This finishes the proof of this lemma.
\end{proof}

Now theorem \ref{THM3} can be proved.
\begin{proof}[Proof of theorem \ref{THM3}]
At first, it is easy to see that that the condition in theorem \ref{THM3} is equivalent to $a>\max\{5,(b+\sqrt{b^2-16})/2\}$, after some times,  the driving function will satisfy the condition of lemma \ref{last}. And the driving function also satisfies the hypothesis of lemma \ref{LPIR}, hence there is an unique limit point $x_2$ in $\mathbb{R}$, and the captured solution which corresponds to $x_2$ is the minimal captured solution. So we only need to show that there is also no limit points in $\mathbb{H}$.

If there is a limit point $z$ in $\mathbb{H}$, we can get the corresponding  solution of the Loewner equation after time change. Denote $x$ and $y$ to be its real and imaginary part. Same as lemma \ref{LPIR}, we can define $\eta$ and $\tilde{x}$, and set $w=x-\tilde{x}$. In all the captured solutions of $\xi$, $\tilde{x}_2$ is the corresponding solution of $x_2$,and $\tilde{x}_2$ is the minimal captured solution. At first, since $y$ is a vanishing solution which driven by $|\xi-x|$, from lemma \ref{COM} and its remark, $\xi-\tilde{x}_2>2$, hence $x<\tilde{x}_2$ can not always happen. And all the captured solutions except $\tilde{x}_2$ are converge to $\tilde{x}$, hence $x$ will be greater than $\tilde{x}$ at some time, and after that, $x>\tilde{x}$ always hold.

We can  see that $\eta$ have not only a positive lower bound but also an upper bound, and this upper bound decreasing to 0 as $a$ increasing to $+\infty$. It is easy to check that, when $a>5$, this upper bound is less than $2$. Hence we consider the imaginary Loewner equation of $y$,  since $y$ is a vanishing solution which driven by $\eta-w$, there are only two cases: the first case is that after a sufficient large time, $y$ has a positive lower bound, and this lower bound increasing to $2$ as $a$ increasing to $+\infty$. Like the last lemma, we consider the equation of $w$, for a solution $w$ with initial value $0$,
\begin{align*}
 \dot{w}&=w-\frac{4w}{(\eta-w)\eta}+\frac{4}{(\eta-w)^2+y^2}\frac{y^2}{\eta-w}\\
&=w+\frac{4}{\eta-w}\left(\frac{y^2}{(\eta-w)^2+y^2}-\frac{w}{\eta}\right)
\end{align*}

if $w<\eta$ and $\eta\leq 2y$, it is easy to check that the last term is positive. This upper bound of $\eta$ is sufficiently small when $a$ is sufficiently large, but the lower bound of $y$ tends to $2$, hence $\eta>2y$ will not happen, actually, $a\leq 5$ can make sure that $w$ is increasing and $\dot{w}>w$. Then at some finite time, $w=\eta$ will happen, that is $x=\xi$. By the real Loewner after the time change, we obtain that when $x>\xi$, $x$ is still increasing  and $\dot{x}>x$. Thus $x$ will increasing to $+\infty$ exponentially, this makes a contradiction to the fact that $y$ is vanishing.

In the second case, $y$ decreasing to $0$. Because that $\eta$ has an upper bound less than $2$, $y$ will decreasing to $0$ exponentially. Now we consider the small circles of lemma \ref{LPIR}, after a finite time, $(x,y)$ will be in one of them. But the points of the circles are the inner point of $J_1$, which gives us a contradiction.

\begin{remark}
In this proof, $5$ can be improved but can not reach $4$. But we believe that theorem \ref{THM3} is also true when $a>4$, we made a wrong proof of $a>4$ in the previous version. Thanks Joan Lind for pointing out the gap of that proof.
\end{remark}

\end{proof}
\bibliography{ref}
\bibliographystyle{plain}
\end{document}